\numberwithin{equation}{section}
\def\titlerunning#1{\gdef\titrun{#1}}
\def\author#1{\gdef\autrun{\def\and{\unskip, }#1}\gdef\@author{#1}}
\def\subjclass#1{{\renewcommand{\thefootnote}{}%
\footnote{\emph{Mathematics Subject Classification (2010):} #1}}}
\def\keywords#1{\par\medskip
\noindent\textbf{Keywords.} #1}
\theoremstyle{plain}
\newtheorem{Thm}{Theorem}[section]
\newtheorem{Lem}[Thm]{Lemma}
\newtheorem{claim}{Claim}[section]
\newtheorem{Cor}[Thm]{Corollary}
\newtheorem{Prop}[Thm]{Proposition}
\newtheorem*{Thm*}{Theorem}
\newtheorem*{claim*}{Claim}
\theoremstyle{definition}
\newtheorem*{Def*}{Definition}
\newtheorem*{Cor*}{Corollary}
\newtheorem{Rem}[Thm]{Remark}
\newcommand{\equ}{equation}
\newcommand{\C}{\mathbb{C}}
\newcommand{\N}{\mathbb{N}}
\newcommand{\R}{\mathbb{R}}
 \DeclareMathOperator{\dist}{dist}
\DeclareMathOperator{\meas}{meas}
\newcommand\eps{\varepsilon}
\newcommand{\weakto}{\rightharpoonup}
\let\nhatoksa=\theenumi
\let\nhatoksb=\labelenumi
\let\nhatoksc=\theenumii
\let\nhatoksd=\labelenumii
\newlength{\nhalengtha}
\newlength{\nhalengthb}
\newlength{\nhalengthc}
\newcommand{\resetenum}{
\let\theenumi=\nhatoksa
\let\labelenumi=\nhatoksb
\let\theenumii=\nhatoksc
\let\labelenumii=\nhatoksd
\setlength{\leftmargini}{\nhalengtha}
\setlength{\leftmarginii}{\nhalengthb}
\setlength{\labelwidth}{\nhalengthc}
}
\newcommand\cb{\mathcal{B}}
\newcommand\cc{\mathcal{C}}
\newcommand\cd{\mathcal{D}}
\newcommand\cm{\mathcal{M}}
\def\msj{\mathscr{J}}
\def\msl{\mathscr{L}}
\def\mst{\mathscr{T}}
\def\ov{\overline}
\def\pa {\partial}
\def\op{\oplus}
\def\ka {\kappa}
\def\al {\alpha}
\def\be {\beta}
\def\de {\delta}
\def\De{\Delta}
\def\ga {\gamma}
\def\Ga {\Gamma}
\def\la {\lambda}
\def\Lam {\Lambda}
\def\om {\omega}
\def\si {\sigma}
\def\eps {\varepsilon}
\def\va {\varphi}
\def\meas{\hbox{\it meas}}
\def\real{\hbox{Re}}
\def\vecmu{{\vec{\mu}}}
\newcommand{\inp}[2]{\left\langle#1,#2\right\rangle}
\begin{document}

\baselineskip=17pt

\titlerunning{Strongly localized semiclassical states for nonlinear Dirac equations}

\title{Strongly localized semiclassical states for nonlinear Dirac equations}

\author{Thomas Bartsch,\, Tian Xu\footnote{Supported by the National Science Foundation of China (NSFC 11601370, 11771325) and the Alexander von Humboldt Foundation of Germany}}


\maketitle

\subjclass{Primary 35Q40; Secondary 49J35}

\begin{abstract}
We study semiclassical states of the nonlinear Dirac equation
\[
  -i\hbar\pa_t\psi = ic\hbar\sum_{k=1}^3\al_k\pa_k\psi - mc^2\be \psi - M(x)\psi + f(|\psi|)\psi,\quad t\in\R,\ x\in\R^3,
\]
where $V$ is a bounded continuous potential function and the nonlinear term $f(|\psi|)\psi$ is superlinear, possibly of critical growth.
Our main result deals with standing wave solutions that concentrate near a critical point of the potential. Standard methods applicable to nonlinear Schr\"odinger equations, like Lyapunov-Schmidt reduction or penalization, do not work, not even for the homogeneous nonlinearity $f(s)=s^p$. We develop a variational method for the strongly indefinite functional associated to the problem.

\end{abstract}

\keywords{Dirac equation, semiclassical states, standing waves, concentration, strongly indefinite functional}

\section{Introduction}
Standing wave solutions for the nonlinear Schr\"odinger equation
\[
  -i\hbar\pa_t\psi = -\De \psi + V(x)\psi + f(|\psi|)\psi
\]
a non-relativistic wave equation, have been in the focus of nonlinear analysis since decades. In particular, semiclassical states that concentrate near a critical point of the potential $V$ have been widely investigated ever since the influential paper \cite{Floer} by Floer and Weinstein who treated the cubic nonlinearity $|\psi|^2\psi$ in one-dimension.

Much less is known for the nonlinear Dirac equation
\[
  -i\hbar\pa_t\psi = ic\hbar\sum_{k=1}^3\al_k\pa_k\psi - mc^2\be \psi - M(x)\psi + f(x,|\psi|)\psi,\qquad t\in\R,\ x\in\R^3,
\]
a relativistic wave equation and a spinor generalization of the nonlinear Schr\"odinger equation, not even in the case of $f$ being a pure power with subcritical nonlinearity. Here $\psi(t,x)\in\C^4$, $c$ is the speed of light, $\hbar$ is Planck's constant, $m$ is the mass of the particle and $\al_1$, $\al_2$, $\al_3$ and $\be$ are the $4\times4$ complex Pauli matrices:
\[
\be=
  \begin{pmatrix}
    I & 0 \\
    0 & -I
  \end{pmatrix}, \quad
\al_k=
  \begin{pmatrix}
    0 & \si_k \\
   \si_k & 0
  \end{pmatrix}, \quad k=1,2,3,
\]
with
\[
\si_1=
  \begin{pmatrix}
    0 & 1 \\
    1 & 0
  \end{pmatrix}, \quad
\si_2=
  \begin{pmatrix}
    0 & -i \\
    i & 0
  \end{pmatrix}, \quad
\si_3=
  \begin{pmatrix}
    1 & 0 \\
    0 & -1
  \end{pmatrix} \,.
\]
The external field $M(x)$ represents an arbitrary electric potential depending only upon $x\in\R^3$. The nonlinear coupling $f(x,|\psi|)\psi$ describes a self-interaction. Typical examples for nonlinear couplings can be found in the self-interacting scalar theories; see \cite{FLR,FFK,Iva} and more recently \cite{CV, ES, ES2, FZ, FZ2, NP, THIN}. Usually, in Quantum electrodynamics nonlinear Dirac equations have to satisfy symmetry constraints, in particular the Poincar\'e covariance. Nonlinear Dirac equations modeling Bose-Einstein condensates break this symmetry, and often the nonlinearity is a {\it power-type} function that depends only on the local condensate density (see \cite{HC, HWC, HC2} for more background from physics).

The ansatz $\psi(t,x)=e^{i\om t/\hbar}u(x)$ for a standing wave solution and a change of notation (in particular $\eps$ instead of $\hbar$) leads to an equation of the form
\begin{\equ}\label{nd2}
  -i\eps\sum_{k=1}^3\al_k\pa_ku + a\be u + V(x)u = f(x,|u|)u , \quad u\in H^1(\R^3,\C^4).
\end{\equ}
This type of particle-like solution does not change its shape as it evolves in time, hence has a soliton-like behavior.
In this paper we investigate the existence of semiclassical states, i.e.\ solutions $u_\eps$ of \eqref{nd2} for small $\eps>0$, that concentrate as $\eps\to0$ at a critical point $x_0$ of the potential $V$.
There are many results of this type for nonlinear Schr\"odinger equations
\begin{\equ}\label{Schrodinger}
  -\eps^2\De u + V(x)u = g(u), \quad u\in H^1(\R^N),
\end{\equ}
beginning with the pioneering work by Floer and Weinstein \cite{Floer} and then continued by Oh \cite{O1, O2} and many others, e.g.\ \cite{Ambrosetti1, Ambrosetti2, Bartsch-Clapp-Weth:2007, Byeon-JJ, Byeon-Wang, DPR, Del-Pino, Del-Pino2, Del-Pino3, Rab}. It has been proved that there exists a family of semiclassical solutions to \eqref{Schrodinger} for small $\eps$ which concentrate around stable critical points of the potential $V$ as $\eps\to0$. The proofs are based on Lyapunov-Schmidt type methods, penalization, and variational techniques.

Very few results are available for the nonlinear Dirac equation \eqref{nd2} compared with the nonlinear Schr\"odinger equation.  A major difference between nonlinear Schr\"odinger and Dirac equations is that the Dirac operator is strongly indefinite in the sense that both the negative and positive parts of the spectrum are unbounded and consist of essential spectrum. It follows that the quadratic part of the energy functional associated to \eqref{nd2} has no longer a positive sign, moreover, the Morse index and co-index at any critical point of the energy functional are infinite.

In order to compare our result with the existing literature we first present in short the state of the art. The first result for concentration behavior of the nonlinear Dirac equation \eqref{nd2} is due to Ding \cite{Ding2010}, who considered the case $V\equiv0$ and $f(x,|u|)=P(x)|u|^{p-2}$ with $p\in(2,3)$ subcritical, $\inf P>0$, and $\limsup_{|x|\to\infty} P(x)<\max P$. He obtained a least energy solution $u_\eps$ for $\eps>0$ small that concentrates around a global maximum of $P$ as $\eps\to0$. A similar result has been obtained in \cite{Ding-Lee-Ruf} where $f=f(|u|)$ is subcritical and $V$ satisfies $a<\min V<\liminf_{|x|\to\infty} V(x)\le|V|_\infty<a$. Here the solutions $u_\eps$ concentrate at a global minimum of $V$. In both papers \cite{Ding2010,Ding-Lee-Ruf} the solutions are obtained via a mountain pass argument applied to a reduced functional. In \cite{Ding-Xu-ARMA,WZ} the authors considered the case of a local minimum of $V$ using a penalization approach analogous to the one in \cite{Del-Pino2,Del-Pino3}.

All papers mentioned so far consider a subcritical nonlinearity $f$. The only papers dealing with a critical nonlinearity, i.e. where $f(t)$ grows as $t$ for $t\to\infty$, are \cite{Ding-Ruf,Ding-Liu-Wei}. Both papers assume, in addition to various technical conditions, that $V$ has a global minimum. The least energy solution is obtained again via a mountain pass argument applied to a reduced functional. It is essential that the mountain pass level is below the threshold level where the Palais-Smale condition fails. In \cite {Ding-Liu-Wei} the authors were also able to obtain solutions with energy above the mountain pass level using the oddness of the equation and Lusternik-Schnirelmann type arguments, but again the energy levels of the solutions are below the level where the Palais-Smale condition fails.

The distinct new feature of our result is that we find solutions of
\begin{\equ}\label{nd3}
  -i\eps\sum_{k=1}^3\al_k\pa_ku + a\be u + V(x)u = f(|u|)u
\end{\equ}
localized near a critical point of $V$ that is not necessarily a (local or global) minimum of $V$. The model nonlinearity we consider is $f(t)=\ka t +\la t^{p-2}$ with $\ka,\la>0$ and $p\in(2,3)$. We can deal with local minima, local maxima, or saddle points of $V$, both in the critical ($\ka>0$) and subcritical ($\ka=0$) case. As a consequence, a least energy solution may not exist, and in the variational setting there is no threshold value below which the Palais-Smale condition holds, so that the methods from \cite{Ding-Ruf,Ding-Liu-Wei} do not apply. We have to work at energy levels where the Palais-Smale condition fails which, in the critical case $\ka>0$, leads to a subtle interplay between $\ka,V,\la,p$. Our results are new even in the subcritical case where so far only local minima of $V$ have been treated. They are of course new in the critical case where only global minima of $V$ have been considered. 

The paper is organized as follows. In the next section we state and discuss our main theorem. After collecting some basic results on the Dirac operator in Section~\ref{sec:Preliminaries} we investigate the family of equations
\begin{\equ}\label{eq:limit}
  -i\sum_{k=1}^3\al_k\pa_ku + a\be u + V(\xi)u = f(|u|)u
\end{\equ}
parametrized by $\xi\in\R^3$ which appear as limit equations. This will be done in Section~\ref{sec:limit}. In Section~\ref{sec:truncation} we introduce a truncated problem, set up the variational structure, and prove the Palais-Smale condition for the truncated functional in a certain parameter range. Then in Section~\ref{sec:min-max scheme} we develop a min-max scheme that can be applied to the truncated problem. The proof of a key result, Proposition \ref{key prop}, that is needed for the passage to the limit $\eps\to0$ will be presented in Section~\ref{sec:Proof of key prop}. The delicate analysis in Section~\ref{sec:Proof of key prop} is not needed in the case of a local minimum of $V$ because in that case  the lower bound estimate of Proposition \ref{key prop} is automatically satisfied. In the final Section~\ref{sec:Profile} we show that the solutions of the truncated problem are actually solutions of \eqref{nd2} for $\eps>0$ small enough, thus finishing the proof of the main theorem. The proof of a technical lemma will be presented in the Appendix.

\section{The main result}

We set $\al\cdot\nabla:=\sum_{k=1}^{3}\al_k\pa_k$ so that equation \eqref{nd3} reads as
\[
  -i\eps\al\cdot\nabla u+ a\be u + V(x)u = f(|u|)u , \quad u\in H^1(\R^3,\C^4).
\]
Throughout the paper, we fix the constant $a>0$ and assume that the potential $V$ satisfies
\begin{itemize}
\item[$(V0)$] $V\in\cc^{0,1}(\R^3)\cap L^\infty(\R^3)$  and $|V|_\infty<a$.
\end{itemize}
Here we use the notation $|\cdot|_p$ for the various $L^p$-norms. We also require one of the following hypotheses:
\begin{itemize}
\item[$(V1)$] $V$ is $\cc^1$ in a neighborhood of $0$, and $0$ is an isolated local maximum or minimum of $V$.
\item[$(V2)$] $V$ is $\cc^2$ in a neighborhood of $0$, $0$ is an isolated critical point, and there exists a vector space $X\subset\R^3$ such that:
\begin{itemize}
  \item[$(a)$] $V|_X$ has a strict local maximum at $0$;
  \item[$(b)$] $V|_{X^\bot}$ has a strict local minimum at $0$.
\end{itemize}
\end{itemize}
In the case of $(V2)$ we may assume that $\{0\}\ne X\ne\R^3$ so that $0$ is a possibly degenerate saddle point of $V$.

The domain of the quadratic form associated to the Dirac operator is  $H^{\frac12}(\R^3,\C^4)$. This space embeds into the corresponding $L^q$-spaces for $2 \le q \le 3$, and the embedding is locally compact precisely if $q<3$. Therefore the nonlinearity $f(|u|)u$ has subcritical growth if $f(s)s\sim s^{p-1}$ with $2<p<3$, and it has critical growth if $p=3$. In
\eqref{eq:def-ka} below we define for $\la>0$, $p\in(2,3)$ a constant $\bar\ka(V,\la,p)>0$ that appears in the following assumptions when the nonlinearity is critical. Here $F(s):=\int_0^sf(t)t\,dt$ is the primitive of $f(s)s$.
\begin{itemize}
  \item[$(f1)$] $f\in\cc^0[0,\infty)\cap\cc^1(0,\infty)$ satisfies $f(0)=0$ and $f'(s)>0$ for $s>0$.
  \item[$(f2)$] There exist $\la>0$, $p\in(2,3)$, $\ka\in[0,\bar\ka)$ with $\bar\ka=\bar\ka(V,\la,p)$ defined in \eqref{eq:def-ka} such that $f(s)\geq \ka s + \la s^{p-2}$ for $s>0$, and $f'(s)\to\ka$ as $s\to\infty$.
  \item[$(f3)$] There exists $\theta>2$ such that $0<\theta F(s)\le f(s)s^2 + \frac{\theta-2}{3}\ka s^3$ for $s>0$.
\end{itemize}
These conditions imply that $s\mapsto f(s)s$ is strictly increasing and superlinear. Condition $(f3)$ is a weakened Ambrosetti-Rabinowitz condition. If $\ka>0$ then the nonlinearity has critical growth.

\begin{Thm}\label{main result}
Assume that $V$ satisfies $(V0)$ and one of $(V1)$ or $(V2)$. Suppose that $f$ satisfies $(f1)$, $(f2)$ and $(f3)$. Then \eqref{nd2} has a solution $u_\eps$ for $\eps>0$ small. These solutions have the following properties.
\begin{itemize}
  \item[$(i)$] $|u_\eps|$ possesses a global maximum point $x_\eps\in\R^3$ such that $x_\eps \to0$ as $\eps\to0$,
  and
  \[
    |u_\eps(x)|\leq C \exp\Big( -\frac{c}\eps|x-x_\eps| \Big)
  \]
  with $C,c>0$ independent of $\eps$.
  \item[$(ii)$] The rescaled function $U_\eps(x)=u_\eps(\eps x+x_\eps)$ converges as $\eps\to0$ uniformly to a least energy solution $U:\R^3\to\C^4$ of
  \[
    -i \al\cdot\nabla U+a\be U+V(0)U = f(|U|)U.
  \]
\end{itemize}
\end{Thm}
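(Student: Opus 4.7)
The plan is to rescale $v(x) = u(\eps x)$, reducing \eqref{nd2} to
\begin{\equ}
  -i\al\cdot\nabla v + a\be v + V(\eps x)v = f(|v|)v
\end{\equ}
on $H^{1/2}(\R^3,\C^4)$, whose formal limit as $\eps\to0$ is \eqref{eq:limit} at $\xi=0$. The associated functional $\Phi_\eps$ is strongly indefinite: the spectrum of $-i\al\cdot\nabla+a\be$ splits as $(-\infty,-a]\cup[a,\infty)$ with infinite-dimensional eigenspaces of both signs. I would therefore work with the spectral decomposition $H^{1/2}(\R^3,\C^4)=E^+\oplus E^-$ and use a linking-type scheme in the spirit of Benci-Rabinowitz and Kryszewski-Szulkin rather than mountain pass; Lyapunov-Schmidt reduction is unavailable because the Morse index and co-index at any critical point are both infinite.

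Because $0$ need not be a global minimum of $V$, and in the critical case $\ka>0$ the Palais-Smale condition fails globally, the second step would be to introduce (as in Section~\ref{sec:truncation}) a modified nonlinearity outside a bounded region in $x$, yielding a truncated functional $\Phi_\eps^*$ that satisfies Palais-Smale below an explicit threshold. The constant $\bar\ka(V,\la,p)$ in $(f2)$ is calibrated precisely so that the target min-max level lies under this threshold in the critical regime. The min-max scheme of Section~\ref{sec:min-max scheme} would then reflect the local topology of $V$ at $0$: using least-energy solutions $U_\xi$ of the limit equation \eqref{eq:limit}, translated by $\xi/\eps$ and coupled with elements of $E^-$ to form the appropriate linking class, one constructs a test family parametrized by $\xi$ in a small neighborhood of $0$ on which $\Phi_\eps^*$ is close to $\mse(V(\xi))$, where $\mse$ denotes the least-energy level of the limit problem. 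Under $(V1)$ this realizes a local extremum; under $(V2)$ it gives a genuine linking over $X$ (the maximum direction) and $X^\bot$ (the minimum direction). Either way, one obtains a Palais-Smale sequence for $\Phi_\eps^*$ at a level $c_\eps$ with $c_\eps \le \mse(V(0))+o(1)$ as $\eps\to0$.

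The principal obstacle, and the content of Proposition~\ref{key prop}, is the matching lower bound $c_\eps \ge \mse(V(0))-o(1)$. This prevents any critical point $v_\eps$ produced by the scheme from concentrating at some $\xi_*\ne 0$ with $\mse(V(\xi_*))<\mse(V(0))$. For a local minimum of $V$ the bound is automatic, since $V(\xi)\ge V(0)$ nearby and $\mse$ is monotone in $V$; for a saddle the competition between the $X$- and $X^\bot$-contributions is subtle and the bound requires the delicate argument of Section~\ref{sec:Proof of key prop}, which I would take as a black box.

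Finally, once $v_\eps$ is produced, let $y_\eps$ be a global maximum of $|v_\eps|$ and set $U_\eps(y) = v_\eps(y+y_\eps)$. Concentration-compactness in $H^{1/2}$, combined with the matched bounds on $c_\eps$, forces $U_\eps\to U$ in $H^{1/2}_\loc$ and uniformly on compacta, where $U$ is a least-energy solution of $-i\al\cdot\nabla U+a\be U+V(0)U=f(|U|)U$, and $\eps y_\eps\to0$. The pointwise exponential decay is standard: one compares $|U_\eps|$ with a supersolution of an auxiliary Schr\"odinger-type operator, using $a-|V|_\infty>0$ from $(V0)$ at infinity. Converting back to the original variable with $x_\eps:=\eps y_\eps$ yields $|u_\eps(x)|\le C\exp(-c|x-x_\eps|/\eps)$. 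This same decay ensures that for $\eps$ small the truncation is inactive on the region where $v_\eps$ is not negligible, so $v_\eps$ is in fact a critical point of the original $\Phi_\eps$, and $u_\eps(x):=v_\eps(x/\eps)$ solves \eqref{nd2}, completing the proof.
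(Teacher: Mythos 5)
Your sketch follows the paper's overall arc — rescale, truncate, min-max with a matching lower bound, profile analysis, exponential decay, then show the truncation is inactive — but two of the ingredients you describe do not match what the paper actually does, and at least one of them is a genuine gap rather than a stylistic difference.

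First, the treatment of the strong indefiniteness. You propose a Kryszewski--Szulkin / Benci--Rabinowitz linking in the full space $E$, with test elements ``translated least-energy solutions $U_\xi$ of the limit problem, coupled with elements of $E^-$,'' and you assert that along this family $\Phi^*_\eps$ is close to the limiting level at $V(\xi)$. The paper does something structurally different: via Theorem~\ref{thm:red-couple} it exploits the strict concavity of $\Phi_\eps$ in the $E^-$-direction to build the reduction couple $(\Phi_\eps^{red},h_\eps)$, and the entire min-max of Section~\ref{sec:min-max scheme} is run on the Nehari--Pankov manifold $\cm^+(\Phi_\eps)\subset E^+$, with the path $p_\eps(\xi)=t_{\xi,\eps}U^+(\cdot-\xi)\in\cm^+(\Phi_\eps)$, $\xi\in\ov{B_0^\eps}\cap X$. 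Crucially, the paper \emph{explicitly cautions} that $p_\eps(\xi)+h_\eps(p_\eps(\xi))$ does \emph{not} resemble a least-energy solution of the limit problem, since nothing quantitative is known about $h_\eps$; this is the stated reason why a direct test-function comparison of the type you invoke cannot be used to obtain the lower bound. Instead, the lower bound $\liminf_\eps b_\eps\ge\ga(J_{\nu_0})$ of Proposition~\ref{key prop} is established by introducing the barycenter functional $\cb_\eps$, the constrained minimization $b_\eps$ over $\widetilde{\cm^+}(\Phi_\eps)$, and a topological degree argument (Lemma~\ref{bvr is well defined}) tying $\ga_\eps$ to $b_\eps$. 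Taking Proposition~\ref{key prop} as a black box is fine, but the ``energy-close test family'' reasoning you give for why the scheme should reflect the saddle geometry of $V$ is precisely the thing the paper says is unavailable.

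Second, the localization $\eps y_\eps\to 0$. You argue that concentration-compactness plus the matched energy bounds force the rescaled concentration point to $0$. This is not sufficient once $0$ is a saddle: concentration-compactness produces a limit profile $z_1$ concentrating near some $\bar y_1\in B_2$, and the energy comparison only yields $V(\bar y_1)\le V(0)$, which is automatically satisfied along the maximum directions $X$ without $\bar y_1=0$. The paper closes this gap in Proposition~\ref{asymptotic prop} with a Pohozaev-type identity: multiplying \eqref{profile1} by $\pa_\xi z_\eps$, integrating (Proposition~\ref{invariant under derivatives} allows this), dividing by $\eps$ and passing to the limit gives $\pa_\xi V(\bar y_1)=0$ for all $\xi$ when $\bar y_1\in B_1$, so $\bar y_1$ is a critical point of $V$ in $B_1$ and hence $\bar y_1=0$; the cases $\bar y_1\in B_2\setminus\ov{B_1}$ and $\bar y_1\in\pa B_2$ are then excluded using the cut-off structure and the non-radiality condition \eqref{pa V}. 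This Pohozaev step is essential and is missing from your sketch.
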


\begin{Rem}
Thus in the subcritical case $\ka=0$ equation \eqref{nd2} always has solutions with shape as in $(i)$ and $(ii)$. We do allow critical growth but the factor $\ka$ cannot be too large. The constant $\bar\ka$ depends on $|V|_\infty$, $\sup V$, $\la$ and $p$. It is bounded away from $0$ by a positive number provided $V$ is bounded away from $-a$ and $\sup V\leq 0$. Moreover $\bar\ka\to0$ as $|V|_\infty\to a$. It is an interesting open problem whether the restriction on $\ka$ can be removed.
\end{Rem}

\section{Preliminaries}\label{sec:Preliminaries}

We write $L^q=L^q(\R^3,\C^4)$ for $q\geq1$ and $H^s=H^s(\R^3,\C^4)$ for $s>0$. Let $D_a=-i\al\cdot\nabla+a\be$ denote the self-adjoint operator on $L^2$ with domain $\cd(D_a)=H^1$. It is well known that the spectrum of $D_a$ is purely continuous and $\si(D_a)=\si_c(D_a)=\R\setminus(-a,a)$. Therefore $L^2$ possesses the orthogonal decomposition
\begin{equation}\label{l2dec}
  L^2=L^+\oplus L^-,\ \ \ \ u=u^++u^-,
\end{equation}
so that $D_a$ is positive definite (resp.\ negative definite) in $L^+$ (resp.\ $L^-$). Now let $E:=\cd(|D_a|^{1/2})$ be the form domain of $D_a$ endowed with the inner product
\[
  \inp{u}{v}=\real\big( |D_a|^{1/2}u,|D_a|^{1/2}v \big)_2
\]
and induced norm $\|\cdot\|$; here $(\cdot,\cdot)_2$ denotes the $L^2$-inner product. This norm is equivalent to the usual $H^{1/2}$-norm, hence $E$ embeds continuously into $L^q$ for all $q\in[2,3]$ and compactly into $L_{loc}^q$ for all $q\in[2,3)$. Clearly $E$ possesses the decomposition
\begin{\equ}\label{Edec}
  E=E^+\oplus E^- \quad \text{with } \ E^{\pm}=E\cap L^{\pm},
\end{\equ}
orthogonal with respect to both $(\cdot,\cdot)_2$ and $\inp{\cdot}{\cdot}$. Since $\si(D_a)=\R\setminus(-a,a)$, one has
\begin{\equ}\label{l2ineq}
  a|u|_2^2\leq\|u\|^2 \quad  \mathrm{for\ all\ }u\in E.
\end{\equ}
The decomposition of $E$ induces also a natural decomposition of $L^q$ for every $q\in(1,\infty)$ as proved in \cite{Ding-Xu-ARMA}.

\begin{Prop}\label{lpdec}
Setting $E^\pm_q:=E^\pm\cap L^q$ for $q\in(1,\infty)$ there holds
\[
  L^q={\rm cl}_q\, E^+_q\op {\rm cl}_q\, E^-_q
\]
with ${\rm cl}_q$ denoting the closure in $L^q$. More precisely, for every $q\in(1,\infty)$ there exists $d_q>0$ such that
\[
  d_q|u^\pm|_q\leq |u|_q \quad {\rm for\ all\ } u\in E\cap L^q.
\]
\end{Prop}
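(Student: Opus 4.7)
The plan is to realize the spectral projections $P^\pm:L^2\to L^\pm$ corresponding to the decomposition \eqref{l2dec} as Fourier multiplier operators and to show that they extend to bounded operators on $L^q$ for every $q\in(1,\infty)$. The identities $\al_j\al_k+\al_k\al_j=2\de_{jk}I$, $\al_k\be+\be\al_k=0$, $\be^2=I$ give $D_a^2=(-\De+a^2)I$, so $|D_a|$ is the Fourier multiplier with scalar symbol $\sqrt{|\xi|^2+a^2}$. Consequently
\[
P^\pm=\frac12\Big(I\pm\frac{D_a}{|D_a|}\Big)
\]
is the matrix-valued Fourier multiplier with symbol
\[
m^\pm(\xi)=\frac12\Big(I\pm\frac{\al\cdot\xi+a\be}{\sqrt{|\xi|^2+a^2}}\Big).
\]

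The next step is to verify that $m^\pm$ satisfies the H\"ormander--Mikhlin condition. Because $\sqrt{|\xi|^2+a^2}\geq a>0$ throughout $\R^3$, the symbol $m^\pm$ is in fact $\cc^\infty(\R^3)$, uniformly bounded, and a direct induction on $|\ga|$ shows that each entry satisfies $|\pa^\ga m^\pm(\xi)|\leq C_\ga(1+|\xi|)^{-|\ga|}$ for all multi-indices $\ga$. Applying the classical Mikhlin theorem entrywise, $P^\pm$ extends to a bounded operator on $L^q(\R^3,\C^4)$ for each $q\in(1,\infty)$, with norm $C_q=C_q(q,a)$. For $u\in E\cap L^q$ this yields $|u^\pm|_q=|P^\pm u|_q\leq C_q|u|_q$, proving the second claim with $d_q:=1/C_q$.

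For the direct sum decomposition, I exploit that the $P^\pm$ are bounded idempotents on $L^q$, so their ranges $R^\pm:=P^\pm(L^q)$ are closed subspaces with $L^q=R^+\op R^-$. The inclusion $E^\pm_q\sbt R^\pm$ is immediate since $P^\pm$ is the identity on $L^\pm$, and taking $L^q$-closure gives ${\rm cl}_q E^\pm_q\sbt R^\pm$. For the converse, given $v\in R^\pm$ I approximate $v$ in $L^q$ by Schwartz functions $v_n$; then $P^\pm v_n\to v$, and each $P^\pm v_n$ is itself Schwartz (because $m^\pm$ is smooth with uniformly bounded derivatives, so multiplication by $m^\pm$ preserves $\mathscr{S}$). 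In particular $P^\pm v_n\in E\cap L^\pm\cap L^q=E^\pm_q$, giving $R^\pm\sbt{\rm cl}_q E^\pm_q$.

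The main technical point is the Mikhlin verification for the matrix-valued symbol. The crucial structural feature making everything routine is the spectral gap $\si(D_a)\cap(-a,a)=\emp$: it forces $|D_a|\geq a>0$, which removes the potential singularity of $m^\pm$ at $\xi=0$ and ensures smoothness on all of $\R^3$ rather than only on $\R^3\setminus\{0\}$. Without this gap (e.g.\ the massless case $a=0$) one would be forced to invoke the full strength of the Mikhlin or Calder\'on--Zygmund theory, whereas here one needs only uniform $\cc^\infty$ estimates of the symbol.
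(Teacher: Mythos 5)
The paper does not prove Proposition~\ref{lpdec}; it is quoted from the reference \cite{Ding-Xu-ARMA}. Your proof is a correct, self-contained argument and takes precisely the natural route that one expects is behind the cited result: you identify $P^\pm$ as the Fourier multiplier with symbol $m^\pm(\xi)=\tfrac12\bigl(I\pm\tfrac{\al\cdot\xi+a\be}{\sqrt{|\xi|^2+a^2}}\bigr)$, which agrees with the explicit $2\times2$-block formula the paper records in the proof of Proposition~\ref{invariant under derivatives} (your compact form factors as their $\bigl(\tfrac12+\tfrac{a}{2\sqrt{a^2+|\xi|^2}}\bigr)$ times their block matrix). Your Mikhlin verification is sound: the spectral gap $a>0$ makes $m^\pm$ smooth on all of $\R^3$, bounded, and with derivatives $O((1+|\xi|)^{-|\ga|})$, so the entrywise Mikhlin condition $|\xi|^{|\ga|}|\pa^\ga m^\pm(\xi)|\le C_\ga$ holds for $|\ga|\le 2>\tfrac32$. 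Your identification of ${\rm cl}_q E^\pm_q$ with the range $R^\pm=P^\pm(L^q)$ is also correct: the inclusion $E^\pm_q\subset R^\pm$ uses that $P^\pm$ restricts to the identity on $L^\pm$, and the reverse inclusion uses that $m^\pm$ is an $\mathcal{O}_M$-multiplier, hence preserves the Schwartz class, so $P^\pm$ of a Schwartz approximation of $v\in R^\pm$ lies in $E^\pm_q$ and converges to $v$. In short, your argument is correct and essentially matches (indeed, reconstructs) the approach the paper delegates to \cite{Ding-Xu-ARMA}.
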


Moreover, the decomposition is invariant when taking derivatives.

\begin{Prop}\label{invariant under derivatives}
For $u\in H^1$ we have $\pa_k u^{\pm}=(\pa_k u)^{\pm}$.
\end{Prop}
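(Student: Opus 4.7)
The plan is to realize the spectral projections $P^{\pm}: L^2 \to L^{\pm}$ appearing in \eqref{l2dec} as Fourier multipliers with matrix-valued symbols, and then observe that they commute with the scalar Fourier multiplier $i\xi_k$ associated to $\partial_k$. This is a purely kinematic fact about $D_a$ and has nothing to do with the nonlinear problem.

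First I would diagonalize $D_a$ via the Fourier transform. Under $\mathcal{F}$, the operator $D_a = -i\al\cdot\nabla + a\be$ becomes multiplication by the matrix symbol $M(\xi) := \al\cdot\xi + a\be \in \C^{4\times 4}$. Using the Clifford-type relations $\al_j\al_k + \al_k\al_j = 2\de_{jk}I$, $\al_k\be + \be\al_k = 0$ and $\be^2 = I$, a direct computation gives $M(\xi)^2 = (|\xi|^2 + a^2)I$. Hence $M(\xi)$ has eigenvalues $\pm\la(\xi)$ with $\la(\xi):=\sqrt{|\xi|^2+a^2}\geq a$, and its spectral projectors are
\[
  P^{\pm}(\xi) = \frac{1}{2}\left(I \pm \frac{M(\xi)}{\la(\xi)}\right),
\]
which satisfy $\|P^{\pm}(\xi)\|\leq 1$ uniformly in $\xi \in \R^3$.

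Second, by the functional calculus for the self-adjoint operator $D_a$, the orthogonal projections $P^{\pm}$ onto the spectral subspaces $L^{\pm}=\chi_{\pm[a,\infty)}(D_a)L^2$ act as $\widehat{P^{\pm}u}(\xi) = P^{\pm}(\xi)\hat{u}(\xi)$. Since the symbol is bounded, $P^{\pm}$ maps every Sobolev space $H^s$ continuously into itself; in particular $u^{\pm} = P^{\pm}u \in H^1$ whenever $u\in H^1$, so both sides of the claimed identity lie in $L^2$.

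Third, and this is the heart of the argument, the Fourier symbol of $\pa_k$ is the scalar $i\xi_k$, which commutes with the matrix $P^{\pm}(\xi)$ at every point $\xi$. Therefore for $u\in H^1$,
\[
  \widehat{\pa_k u^{\pm}}(\xi) = i\xi_k\,P^{\pm}(\xi)\hat{u}(\xi) = P^{\pm}(\xi)\bigl(i\xi_k\hat{u}(\xi)\bigr) = \widehat{(\pa_k u)^{\pm}}(\xi),
\]
and inverting the Fourier transform yields $\pa_k u^{\pm} = (\pa_k u)^{\pm}$. There is no serious obstacle; the only point requiring a touch of care is checking that $P^{\pm}u \in H^1$ so that the derivatives exist as honest $L^2$-functions, which is already handled by the uniform bound on the symbol.
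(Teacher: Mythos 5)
Your proof is correct and takes essentially the same route as the paper: both pass to the Fourier side, realize $P^\pm$ as matrix-valued Fourier multipliers, and conclude from the pointwise commutation of the multiplier with $i\xi_k$. The only difference is presentational — the paper quotes the explicit block-matrix formulas for $\widehat{P^\pm}$ from \cite{Ding-Xu-ARMA}, whereas you rederive them more cleanly as $P^{\pm}(\xi)=\tfrac12\bigl(I\pm M(\xi)/\lambda(\xi)\bigr)$ via the Clifford relations $M(\xi)^2=(a^2+|\xi|^2)I$, which indeed reproduces the cited formulas.
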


\begin{proof}
The Fourier transformation of $D_a$ is given by
\[
(D_au){\hat\ }(\xi)=\left(
\begin{array}{cc}
0& \sum_{k=1}^3\xi_k\si_k \\
\sum_{k=1}^3\xi_k\si_k &0
\end{array} \right)
\hat u
 + \left(
\begin{array}{cc}
a&0\\
0&-a
\end{array} \right)
\hat u,
\]
where $\hat u$, a $\C^4$-valued function, denotes the Fourier transform of $u\in L^2$.
It has been proved in \cite{Ding-Xu-ARMA} that the Fourier transforms of the orthogonal projections $P^\pm:L^2\to L^\pm$ are given by
\[
(P^+u){\hat\ }(\xi)=\Big(\frac12+\frac{a}{2\sqrt{a^2+|\xi|^2}}\Big)
\left(
\begin{array}{cc}
I
&  \Sigma(\xi)
\\[2pt]
 \Sigma(\xi)
&  A(\xi)
\end{array} \right)
\hat u
\]
and
\[
(P^-u){\hat\ }(\xi)=\Big(\frac12+\frac{a}{2\sqrt{a^2+|\xi|^2}}\Big)
\left(
\begin{array}{cc}
A(\xi)& -\Sigma(\xi)
 \\[2pt]
-\Sigma(\xi) & I
\end{array} \right)
\hat u
\]
with $I$ being the $2\times2$ identity matrix and
\[
A(\xi)=\frac{\sqrt{a^2+|\xi|^2}-a}{a+\sqrt{a^2+|\xi|^2}}\cdot I, \quad
\Sigma(\xi)=\sum_{k=1}^3\frac{\xi_k\si_k}{a+\sqrt{a^2+|\xi|^2}}.
\]
The proposition follows from the fact that these matrix operations commute with the multiplication by $i\xi_k$ for $k=1,2,3$.
\end{proof}

The proof of our main results will be achieved by variational methods applied to functionals $J:E\to\R$ of the form
\begin{equation}\label{eq:J}
  J(u) = \frac12\big(\|u^+\|^2-\|u^-\|^2 \big) + \frac12 \int_{\R^3} W(x)|u|^2\,dx - \int_{\R^3}G(x,|u|)\,dx.
\end{equation}
The following reduction process will be very useful.

\begin{Thm}\label{thm:red-couple}
  Let $W\in L^\infty$ satisfy $|W|_\infty<a$ and suppose $G:\R^3\times\R_0^+\to \R$ has the form $G(x,s)=\int_0^sg(x,t)tdt$
  where $g$ is  measurable in $x\in\R^3$, of class $\cc^1$ in $s\in\R^+_0$ and satisfies
  \begin{itemize}
    \item[(i)] $0\le g(x,s)s$ for all $x\in\R^3$;
    \item[(ii)] $g(x,s)s=o(s)$ as $s\to0$ uniformly in $x\in\R^3$;
    \item[(iii)] $0 \le \pa_s\big(g(x,s)s\big) \le Cs$ for all $x\in\R^3$, $s>0$, some $C>0$.
  \end{itemize}
  Then the following hold for $J$ as in \eqref{eq:J}.

  \begin{itemize}
    \item[a)] There exists a $\cc^1$-map $h_J:E^+\to E^-$ such that for $v\in E^+$ and $w\in E^-$
    \[
      DJ(v+w)[\phi]=0 \quad\text{for all $\phi\in E^-$}\qquad\Longleftrightarrow\qquad w=h_J(v)
    \]
    and
    \[
      \|h_J(v)\|^2 \le \frac{2|W|_\infty}{a-|W|_\infty}\|v\|^2+\frac{2a}{a-|W|_\infty}\int_{\R^3} G(x,|v|) dx.
    \]
    \item[b)] Setting $J^{red}:E^+\to\R$, $J^{red}(v):=J(v+h_J(v))$, the sets
    \[
      \cm^+(J):=\{v\in E^+\setminus\{0\}: DJ^{red}(v)[v]=0\}
    \]
    and
    \[
      \cm(J):=\{v+h_J(v)\in E\setminus\{0\}: v\in\cm^+(J)\} = \{u\in E\setminus\{0\}: DJ(u)|_{\R u\oplus E^-}=0\}
    \]
    are $\cc^1$-submanifolds of $E$, diffeomorphic to an open subset of the unit sphere $SE^+=\{v\in E^+:\|v\|=1\}$.
    \item[c)] If $(v_n)_n$ is a Palais-Smale sequence for $J^{red}$ then $\{v_n+h_J(v_n)\}_n$ is a Palais-Smale sequence for $J$.
    \item[d)] If $|g(x,s)| = O\left(|s|^{p-2}\right)$ as $|s|\to\infty$ for some $p\in(2,3)$ then $h_J$ is weakly sequentially continuous.
  \end{itemize}
\end{Thm}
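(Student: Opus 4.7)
For part (a), the plan is to fix $v\in E^+$ and analyze $\Psi_v(w):=J(v+w)$ on $E^-$. Using $|W|_\infty<a$ together with \eqref{l2ineq}, the quadratic part $-\|w\|^2+\int W|w|^2$ is bounded above by $-(1-|W|_\infty/a)\|w\|^2$. Since hypothesis (iii) makes $u\mapsto\int G(x,|u|)$ convex (its Hessian is pointwise nonnegative), $\Psi_v$ is strictly concave with concavity modulus bounded away from $0$ and coercive to $-\infty$ on $E^-$. Hence a unique maximum $h_J(v)$ exists, and $C^1$-regularity follows from the implicit function theorem applied to $F(v,w):=P^-\nabla J(v+w)=0$, whose derivative in $w$ is invertible by strict concavity. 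For the norm bound I would compare $J(v+h_J(v))\ge J(v)=J(v+0)$, drop the nonnegative term $\int G(x,|v+h_J(v)|)$, and exploit the $L^2$-orthogonality $|v+h_J(v)|_2^2=|v|_2^2+|h_J(v)|_2^2$ combined with \eqref{l2ineq} to isolate $\|h_J(v)\|^2$ and obtain the displayed inequality.

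For part (b), I would reduce to rays: for $e\in SE^+$ set $\Phi_e(t):=J^{red}(te)$ on $(0,\infty)$. By (ii) one has $\Phi_e(t)=\tfrac{t^2}{2}+o(t^2)$ near $0$, and a critical point $t_e$ of $\Phi_e$ corresponds via $v=t_e e$ to a point of $\cm^+(J)$. The key step is to show $\Phi_e$ has at most one positive critical point, which in this strongly indefinite setting rests on a Nehari--Pankov-type argument: exploit the maximizing property of $h_J$ together with the monotonicity built into (i) and (iii) to produce the required strict comparison. Let $\co\subset SE^+$ denote the open set on which $t_e$ exists. Then $\co\to\cm(J)$, $e\mapsto t_e e+h_J(t_e e)$ is the desired $C^1$-diffeomorphism, with inverse induced by $v\mapsto v/\|v\|$.

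Part (c) is straightforward: since $h_J(v)$ kills the $E^-$-component of $\nabla J$, the chain rule gives $DJ^{red}(v)[\phi]=DJ(v+h_J(v))[\phi+Dh_J(v)\phi]=DJ(v+h_J(v))[\phi]$ for $\phi\in E^+$ (the $E^-$-piece drops out), while $DJ(v+h_J(v))|_{E^-}\equiv 0$ by definition. Hence $\|DJ(v_n+h_J(v_n))\|_{E^*}=\|DJ^{red}(v_n)\|_{(E^+)^*}\to 0$ and $J(v_n+h_J(v_n))=J^{red}(v_n)$ stays bounded. For part (d), assume $v_n\weakto v$ in $E^+$. The estimate in (a) ensures $\{h_J(v_n)\}$ is bounded in $E^-$, so along a subsequence $h_J(v_n)\weakto w$. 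Testing the defining equation $DJ(v_n+h_J(v_n))[\phi]=0$ against compactly supported $\phi\in E^-$, the local compactness $E\hookrightarrow L^q_{\loc}$ for $q<3$ combined with the subcritical growth $|g(x,s)|=O(s^{p-2})$, $p<3$, lets one pass to the limit in the nonlinear term. We obtain $DJ(v+w)[\phi]=0$ for all $\phi\in E^-$, so $w=h_J(v)$ by the uniqueness from (a); since the limit is independent of the subsequence, the full weak convergence $h_J(v_n)\weakto h_J(v)$ follows.

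The main obstacle I anticipate is part (b): in the strongly indefinite setting the standard Nehari trick does not apply directly to $J$, and ray uniqueness for $\Phi_e$ must be extracted from the variational characterization of $h_J$ together with the monotonicity in (i)--(iii); the $C^1$-structure of $\cm^+(J)$ then has to be verified via the implicit function theorem using that $\Phi_e''(t_e)<0$ at the critical $t_e$, which in turn relies on the same strict monotonicity.
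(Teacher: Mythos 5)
The paper does not give a proof of Theorem~\ref{thm:red-couple}; it refers the reader to the standard sources \cite{Ackermann, Ding-Xu-ARMA, Szulkin-Weth:2010}. Your overall strategy is the standard one and largely correct. Part (a) is fine: maximizing $w\mapsto J(v+w)$ over $E^-$ via strict concavity (with modulus $1-|W|_\infty/a$ from \eqref{l2ineq}) and the convexity of the $G$-term under (iii), then applying the implicit function theorem, is exactly the right route; and the comparison $J(v+h_J(v))\ge J(v)$, dropping $-\int G(x,|v+h_J(v)|)\le 0$, using the genuine $L^2$-orthogonality of $v$ and $h_J(v)$ and \eqref{l2ineq}, does yield the displayed inequality with the stated constants. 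Part (c) is also correct: the $E^-$-component of $Dh_J(v)\phi$ is killed by the definition of $h_J$, so $DJ(v+h_J(v))[\psi]=DJ^{red}(v)[\psi^+]$ for all $\psi\in E$ and the operator norms agree. For part (b) you correctly isolate the hard point, namely uniqueness of the ray-maximum $t_e$, as the nontrivial Nehari--Pankov step; the paper encapsulates precisely that computation in Lemma~\ref{key lemma}, showing $H(t)=\va_v(t)-\frac t2\va_v'(t)$ is strictly increasing, which forces $\va_v''<0$ at any critical $t$; spelling out this monotonicity argument is what your sketch leaves implicit.

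The one genuine gap is in part (d). You propose to ``test against compactly supported $\phi\in E^-$'' and then invoke the compact embedding $E\hookrightarrow L^q_{\loc}$. But $E^-$ contains no nonzero compactly supported functions: $P^-$ is a Fourier multiplier, hence nonlocal, and the paper explicitly points out (in the discussion following Lemma~\ref{bvr is well defined}) that the projections $P^\pm$ are of convolution type and therefore do not commute with multiplication by cutoff functions, so compactly supported functions cannot be produced inside $E^\pm$. The correct fix is to test against an arbitrary $\phi\in E^-$, decompose the nonlinear integral as $\int_{B_R}+\int_{B_R^c}$, handle the bounded piece via $u_n\to u$ in $L^q(B_R)$ for $q<3$ (plus Krasnoselskii continuity of the Nemytskii map, using the growth $g(x,s)=O(s^{p-2})$), and control the tail uniformly in $n$ by $|\phi|_{L^p(B_R^c)}\to 0$ and $|\phi|_{L^2(B_R^c)}\to 0$ as $R\to\infty$. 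With this correction the argument closes and gives $DJ(v+w)[\phi]=0$ for all $\phi\in E^-$, hence $w=h_J(v)$ by uniqueness, as you intended.
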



The proof of Theorem~\ref{thm:red-couple} is standard. We refer the reader to \cite{Ackermann, Ding-Xu-ARMA, Szulkin-Weth:2010} for this kind of results. The diffeomorphisms to an open subset of $SE^+$ are simply given by $u\mapsto \frac{u^+}{\|u^+\|}$. In the case $W\equiv\nu\in(-a,a)$ the manifold $\cm(J)$ is the Nehari-Pankov manifold associated to $J$. It will be useful that the decomposition $E=E^+\oplus E^-$ is independent of $W$ and does not necessarily correspond to the decomposition of $E$ into the positive and negative eigenspaces associated to $D^2J(0)=P^+-P^-+W(x)$. We call $J^{red}$ the reduced functional, $h_J$ the reduction map, and $(J^{red},h_J)$ the {\it reduction couple} of $J$.

\begin{Rem}\label{rem:red-couple}
  In the setting of Theorem~\ref{thm:red-couple}, for each $v\in SE^+$ the map $\va_v(t)=J^{red}(tv)$ is $\cc^2$ and has at most one critical point $t_v>0$, which is a nondegenerate maximum. There holds:
\[
  \cm^+(J) = \{t_vv: v\in SE^+,\ \va_v'(t_v)=0\}.
\]
If $G$ grows super-quadratically in $t$ as $t\to\infty$ then $J(tu)\to-\infty$ as $t\to\infty$ and $\va_v(t)$ has a unique maximum for each $v\in SE^+$. Then $\cm(J)$ and $\cm^+(J)$ are diffeomorphic to $SE^+$. It is clear that $\cm(J)$ contains all nontrivial critical points of $J$, and that for $u\in E\setminus\{0\}$ there holds:
\[
   DJ(u)=0 \qquad\Longleftrightarrow\qquad u^-=h_J(u^+) \text{ and } DJ^{red}(u^+)=0
\]
Finally, the infimum of $J$ on $\cm(J)$ can be described as follows:
\begin{equation}\label{eq:def-ga}
\begin{aligned}
  \ga(J) &:= \inf_{u\in\cm(J)}J(u) = \inf_{v\in E^+\setminus\{0\}}\sup_{u\in \R v\op E^-}J(u)\\
         &= \inf_{v\in E^+\setminus\{0\}}\max_{t>0}J^{red}(tv) = \inf_{v\in\cm^+(J)} J^{red}(v)
\end{aligned}
\end{equation}
If $\ga(J)$ is achieved then it is the ground state energy.
\end{Rem}

Theorem~\ref{thm:red-couple} applies in particular to the following functional which depends on the parameters $\vecmu=(\ka,\la,\nu,p)$ with $\ka,\la\ge0$, $|\nu|<a$ and $p\in(2,3)$:
\begin{equation}\label{eq:def-vecmu}
  J_\vecmu(u) = \frac12\big( \|u^+\|^2-\|u^-\|^2 \big) + \frac\nu2|u|_2^2 - \frac\la p|u|_p^p - \frac\ka3|u|_3^3.
\end{equation}
In order to define the constant $\bar\ka$ from Theorem~\ref{main result} let
\begin{equation}\label{eq:defS}
  S:=\inf_{0\ne u\in H^1}\frac{|\nabla u|_2^2}{|u|_6^2}
\end{equation}
be the best constant for the embedding $H^1(\R^3,\C^4)\hookrightarrow L^6(\R^3,\C^4)$. Then we define for $V$ as in $(V0)$, $\la>0$, $p\in(2,3)$ as in $(f2)$, $(f3)$:
\begin{equation}\label{eq:def-ka}
  \bar\ka := \left(\frac{a^2-|V|_\infty^2}{a^2}\right)^{\frac34}S^{\frac34}\big(6\ga(J_{\vecmu_V})\big)^{-\frac12}\qquad \text{with }\vecmu_V:=(0,\la,\sup V,p).
\end{equation}

The following technical result will be needed later.

\begin{Lem}\label{key lemma}
  For $v\in E^+\setminus\{0\}$ the function $H(t)=I(tv)-\frac{t}2I'(tv)[v]$ is of class $\cc^1$ and satisfies $H'(t)>0$ for all $t>0$.
\end{Lem}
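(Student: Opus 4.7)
The strategy is to exploit the form \eqref{eq:J} of $I$ together with the restriction $v\in E^+$, which produces a substantial cancellation in $H(t)=I(tv)-\tfrac{t}{2}I'(tv)[v]$. Since $v\in E^+$ gives $(tv)^+=tv$ and $(tv)^-=0$, direct substitution yields
\[
  I(tv)=\tfrac{t^2}{2}\|v\|^2+\tfrac{t^2}{2}\int_{\R^3}W|v|^2\,dx-\int_{\R^3}G(x,t|v|)\,dx
\]
and, using $\nabla_uG(x,|u|)=g(x,|u|)u$,
\[
  I'(tv)[v]=t\|v\|^2+t\int_{\R^3}W|v|^2\,dx-t\int_{\R^3}g(x,t|v|)|v|^2\,dx.
\]
Upon forming $H(t)$ the entire quadratic part of $I$ drops out, and I obtain the clean representation
\[
  H(t)=\int_{\R^3}R(x,t|v(x)|)\,dx,\qquad R(x,s):=\tfrac{1}{2}g(x,s)s^2-G(x,s).
\]
The $\cc^1$-regularity of $H$ on $(0,\infty)$ then follows from differentiation under the integral, justified by hypothesis (iii) of Theorem~\ref{thm:red-couple} together with the embedding $E\hookrightarrow L^3$.

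For the strict monotonicity I would differentiate $R$ in its second argument. Using $\partial_sG(x,s)=g(x,s)s$, the product rule yields $\partial_sR(x,s)=\tfrac{1}{2}\partial_sg(x,s)\cdot s^2$, with the two copies of $g(x,s)s$ cancelling exactly. Consequently
\[
  H'(t)=\tfrac{t^2}{2}\int_{\R^3}\partial_sg(x,t|v(x)|)\,|v(x)|^3\,dx.
\]
In the present variational framework $g$ encodes the Dirac nonlinearity $f$ (with the linear potential absorbed into $W$), so hypothesis (f1), namely $f'(s)>0$ for every $s>0$, translates into $\partial_sg(x,s)>0$ for every $s>0$. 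Since $v\ne 0$, the integrand is strictly positive on $\{x:v(x)\ne 0\}$, which has positive Lebesgue measure, whence $H'(t)>0$ for every $t>0$.

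The argument is essentially a direct computation and the only point requiring care is the systematic cancellation of the indefinite quadratic contributions from the kinetic part $\tfrac12(\|u^+\|^2-\|u^-\|^2)$ and from the lower-order form $\tfrac12\int W|u|^2\,dx$; once these are eliminated and one is left with $H(t)=\int R(x,t|v|)\,dx$, the positivity of $H'(t)$ is immediate from $\partial_sg>0$. In particular, no appeal to the reduction couple is required, because $tv$ lies in $E^+$ and the implicit reduction map does not enter the calculation; this neatly avoids the main obstacle one would face when $I$ is replaced by its reduced counterpart, namely the need to control the implicit derivative of $h_J$ along the ray $\R v$.
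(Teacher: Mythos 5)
There is a genuine gap: you have proved a different statement from the one in the paper, because you have misidentified the functional $I$. In the paper's notation, $I$ is the \emph{reduced} functional $J^{red}$ of Theorem~\ref{thm:red-couple} (the proof immediately sets $\va_v(t)=I(tv)$, exactly as in Remark~\ref{rem:red-couple} where $\va_v(t)=J^{red}(tv)$, and then manipulates $h_J$, $K''$, $z_v=v+h_J(v)$, $w_v=h_J'(v)[v]-h_J(v)$). The lemma is also applied later to $\mst^{red}_{y_{j_m}}$ in Step~4, Case~2 of Section~\ref{sec:Proof of key prop}, confirming that it concerns a reduced functional. Your observation at the end, that for the reduced counterpart one would need to control the implicit derivative of $h_J$ along $\R v$ and that your computation ``neatly avoids'' this obstacle, is precisely the difficulty the lemma is designed to confront: you have avoided it by proving an easier lemma, not by a smarter argument.

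Concretely, for $v\in E^+$ and $I=J^{red}$ one has $I(tv)=J\bigl(tv+h_J(tv)\bigr)$, so the negative component $h_J(tv)\in E^-$ enters nontrivially: $\|(tv)^-\|=0$ is false for the argument of $J$, and the clean cancellation of the quadratic part that you obtain for $J$ does not occur. Instead, after using $\va_v'(1)=DJ(z_v)[v]$ and $\va_v''(1)=D^2J(z_v)[z_v+w_v,z_v+w_v]$ with $w_v=h_J'(v)[v]-h_J(v)$, one arrives after a lengthy rearrangement at
\[
\va_v'(1)-\va_v''(1)\ \ge\ \int_{\R^3}G'(x,|z_v|)\,|w_v|^2
 + G''(x,|z_v|)\,|z_v|\Bigl(|z_v|+\tfrac{\real\,z_v\cdot\ov{w_v}}{|z_v|}\Bigr)^{2}\,dx\ >\ 0 ,
\]
which is the actual content of the paper's proof; the terms $\|w_v\|^2$, $\int W|w_v|^2$ and $K''(z_v)[w_v,w_v]$ must all be tracked and combined before positivity emerges. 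None of this is visible in your computation of $H(t)=\int R(x,t|v|)\,dx$. A secondary, smaller issue: even within your interpretation, the strict positivity of $H'$ relies on $\pa_sg(x,s)>0$, which does not follow from hypothesis~(iii) of Theorem~\ref{thm:red-couple} (which only gives $0\le\pa_s(g(x,s)s)$) nor directly from $(f1)$, since $g$ also involves the truncation $\tilde f$; the paper's final lower bound is more careful on this point as well.
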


\begin{proof}
We set $\va_v(t)=I(tv)$ so that $H(t)=\va_v(t)-\frac{t}2\va_v'(t)$. Since
\[
  H'(t) = \frac12\va_v'(t)-\frac{t}2\va_v''(t) = \frac1{2t}\big[ \va_{tv}'(1)-\va_{tv}''(1) \big],
\]
it is sufficient to check that $\va_{v}'(1)-\va_{v}''(1)>0$ for all $v\in E^+\setminus\{0\}$. Setting $K(u)=\int_{\R^3}G(x,|u|)\,dx$, we have by the definition of $h_J$
\begin{\equ}\label{X1}
  -\inp{h_J(v)}{\phi}+\real\int_{\R^3}W(x)(v+h_J(v))\cdot\ov{\phi} \,dx- K'(v+h_J(v))[\phi]=0
\end{\equ}
for all $\phi\in E^-$. It follows for $z_v=v+h_J(v)$ and $w_v=h_J'(v)[v]-h_J(v)$ that
\begin{\equ}\label{X2}
\va_v'(1) = \|v\|^2+\real\int_{\R^3}W(x)z_v\cdot\ov{v}\,dx-K'(z_v)[v] = J'(z_v)[z_v+w_v].
\end{\equ}
Since \eqref{X1} is valid for all $v\in E^+$, differentiating yields for all $\phi\in E^-$:
\[
  0 = -\inp{h_J'(v)[v]}{\phi}+\real\int_{\R^3}W(x)(v+h_J'(v)[v])\cdot\ov{\phi}\,dx - K''(v+h_J(v))[v+h_J'(v)[v],\phi]\,.
\]
Choosing $\phi=h_J'(v)[v]$ in the above identity, so that $z_v+w_v=v+\phi$, we get
\[
  \begin{aligned}
  \va_v''(1) &= \|v\|^2+\real\int_{\R^3}W(x)(v+h_J'(v)[v])\cdot\ov{v}\,dx - K''(z_v)[z_v+w_v,v]\\
    &= \|v\|^2-\|\phi\|^2+\int_{\R^3}W(x)|v+\phi|^2\, dx - K''(z_v)[z_v+w_v,v+\phi]\\
    &= J''(z_v)[z_v+w_v,z_v+w_v]\\
    &= \|v\|^2 - \|h_J(v)\|^2+\int_{\R^3}W(x)|z_v|^2\,dx - K''(z_v)[z_v,z_v]\\
    &\hspace{1cm} +2\left( -\inp{h_J(v)}{w_v} + \real\int_{\R^3}W(x)z_v\cdot\ov{w_v}\,dx - K''(z_v)[z_v,w_v] \right)\\
    &\hspace{1cm} +\left( -\|w_v\|^2 + \int_{\R^3}W(x)|w_v|^2\,dx - K''(z_v)[w_v,w_v] \right)\\
    &= \va_v'(1) + \big( K'(z_v)[z_v] - K''(z_v)[z_v,z_v] \big) + 2\big( K'(z_v)[w_v] - K''(z_v)[z_v,w_v] \big)\\
    &\hspace{1cm} - K''(z_v)[w_v,w_v] - \|w_v\|^2 + \int_{\R^3}W(x)|w_v|^2\,dx.
  \end{aligned}
\]
Finally we obtain:
\[
  \va_v'(1)-\va_v''(1) \ge \int_{\R^3}G'(x,|z_v|)|w_v|^2 + G''(x,|z_v|)|z_v| \Big( |z_v|+ \frac{\real\, z_v\cdot \ov{w_v}}{|z_v|} \Big)^2\,dx > 0
\]
\end{proof}
\section{The limit problem}\label{sec:limit}
For $|\nu|<a$ the problem
\begin{\equ}\label{eq:limit problem}
  -i\al\cdot\nabla u + a\be u + \nu u = f(|u|)u, \quad u\in E,
\end{\equ}
appears as limit equation of \eqref{nd2}. We begin with the model case
 \[
   -i\al\cdot\nabla u+a\be u+\nu u= \la|u|^{p-2}u+\ka|u|u \quad u\in E.
\]
and recall the associated energy functional $J_\vecmu$ from \eqref{eq:def-vecmu} with $\vecmu=(\ka,\la,\nu,p)$ and $\ka,\la,p$ from $(f2)$, $(f3)$.

\begin{Prop}\label{prop:inf_Jmu}
The infimum $\ga(J_\vecmu)$ is attained provided $\nu$ satisfies
\begin{\equ}\label{eq:inf_Jmu}
  \Big(\frac{a^2}{a^2-\nu_-^2}\Big)^{\frac32}\cdot\ka^2\cdot\ga(J_\vecmu) < \frac{S^{\frac32}}{6},
\end{\equ}
where $\nu_-=\min\{0,\nu\}$. 
\end{Prop}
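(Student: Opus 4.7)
My plan is to apply the abstract reduction of Theorem~\ref{thm:red-couple} and the min-max characterization of $\ga(J_\vecmu)$ from Remark~\ref{rem:red-couple} to realize the infimum as the energy of a nontrivial critical point obtained as a (possibly translated) weak limit of a Palais-Smale sequence. Since $\cm^+(J_\vecmu)\cong SE^+$ is a $\cc^1$-manifold, Ekeland's variational principle applied to $J_\vecmu^{red}$ on $\cm^+(J_\vecmu)$ produces a sequence $v_n\in\cm^+(J_\vecmu)$ with $J_\vecmu^{red}(v_n)\to\ga(J_\vecmu)$ and $(J_\vecmu^{red})'(v_n)\to 0$; by Theorem~\ref{thm:red-couple}(c), $u_n:=v_n+h_{J_\vecmu}(v_n)\in\cm(J_\vecmu)$ is then a Palais-Smale sequence for $J_\vecmu$ at level $\ga(J_\vecmu)$. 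The algebraic identity
\[
J_\vecmu(u_n)-\tfrac12 J_\vecmu'(u_n)[u_n]=\tfrac{p-2}{2p}\la|u_n|_p^p+\tfrac{\ka}{6}|u_n|_3^3
\]
at once controls $|u_n|_p$ and $\ka|u_n|_3^3$. Testing $J_\vecmu'(u_n)$ against $u_n^+-u_n^-$, and using $|\nu|<a$ together with $E\hookrightarrow L^p,L^3$, yields boundedness of $(u_n)$ in $E$.

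Passing to a subsequence, $u_n\weakto u$ in $E$; local compactness $E\hookrightarrow L^q_{\loc}$ for $2\le q<3$ combined with a.e.\ convergence for the critical term shows that $u$ is a critical point of $J_\vecmu$. Set
\[
\rho_n:=\sup_{y\in\R^3}\int_{B_1(y)}|u_n|^2\,dx.
\]
If $\liminf_n\rho_n>0$, I choose $y_n\in\R^3$ with $\int_{B_1(y_n)}|u_n|^2\ge\rho_n/2$; translation invariance of $J_\vecmu$ ensures that $\tilde u_n:=u_n(\cdot+y_n)$ is again a Palais-Smale sequence at level $\ga(J_\vecmu)$, and the local $L^2$-lower bound forces the new weak limit $\tilde u$ to be nonzero. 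Then $\tilde u\in\cm(J_\vecmu)$ gives $J_\vecmu(\tilde u)\ge\ga(J_\vecmu)$, while the identity above applied to the critical point $\tilde u$, together with Fatou's lemma applied to both $|\tilde u_n|_p^p$ and $|\tilde u_n|_3^3$, yields $J_\vecmu(\tilde u)\le\liminf J_\vecmu(\tilde u_n)=\ga(J_\vecmu)$; hence $\ga(J_\vecmu)$ is attained at $\tilde u$.

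It remains to rule out the vanishing alternative $\rho_n\to 0$, and this is where \eqref{eq:inf_Jmu} enters. Lions' concentration-compactness lemma (in its $H^{1/2}$ form, available since $E\hookrightarrow H^{1/2}$) gives $|u_n|_q\to 0$ for every $q\in(2,3)$, so $|u_n|_p\to 0$ and the identity above forces $\ka|u_n|_3^3\to 6\ga(J_\vecmu)$. Meanwhile, $J_\vecmu'(u_n)[u_n]=o(1)$ reduces to
\[
\|u_n^+\|^2-\|u_n^-\|^2+\nu|u_n|_2^2=\ka|u_n|_3^3+o(1).
\]
The idea is now to bound the left-hand quadratic form from above by a power of $|u_n|_3$ using Proposition~\ref{invariant under derivatives}, the Dirac identity $|D_au|_2^2=|\nabla u|_2^2+a^2|u|_2^2$, the Hölder interpolation $|u|_3^4\le|u|_2^2|u|_6^2$, and the sharp Sobolev inequality $S|u|_6^2\le|\nabla u|_2^2$, treating the sign of $\nu$ via $\nu_-$. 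Passing to the limit this produces
\[
6\ga(J_\vecmu)\ge\Big(\frac{a^2-\nu_-^2}{a^2}\Big)^{3/2}\frac{S^{3/2}}{\ka^2},
\]
which directly contradicts \eqref{eq:inf_Jmu}. The principal obstacle is producing this sharp inequality with precisely the constant $\big((a^2-\nu_-^2)/a^2\big)^{3/2}S^{3/2}/6$: the Dirac decomposition $E=E^+\oplus E^-$ does not match the positive/negative spectral decomposition of the perturbed operator $D_a+\nu$, so one must blend the Dirac identity with the $H^1\hookrightarrow L^6$ embedding in a careful way rather than invoking $H^{1/2}\hookrightarrow L^3$ directly.
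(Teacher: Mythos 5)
Your overall structure --- boundedness, Lions dichotomy, translation in the non-vanishing case, contradiction in the vanishing case --- matches the paper's, and the identity $J_\vecmu(u_n)-\tfrac12 J_\vecmu'(u_n)[u_n]=\tfrac{p-2}{2p}\la|u_n|_p^p+\tfrac{\ka}{6}|u_n|_3^3$ is the right starting point. However, the vanishing case contains a genuine gap that you flag but do not close: the quantity $\|u_n^+\|^2-\|u_n^-\|^2+\nu|u_n|_2^2$ carries the negative term $-\|u_n^-\|^2$, so it cannot be bounded from below by a multiple of $|u_n|_3^2$ without additional information about $u_n^-$. The paper's mechanism for controlling this term is missing from your outline: one first passes from $J_\vecmu$ to $J_{\vecmu_0}$ with $\vecmu_0=(\ka,0,\nu,p)$ by rescaling $v_n$, exploiting $|u_n|_p\to0$ and the weak sequential continuity of $h_{\vecmu_0}$, to obtain $\ga(J_\vecmu)\ge\ga(J_{\vecmu_0})$; one then introduces the functional $I(u)=\big(\|u^+\|^2-\|u^-\|^2+\nu|u|_2^2\big)/|u|_3^2$ and shows, using the Nehari constraint $DJ_{\vecmu_0}^{red}(v)[v]=0$, that $h_{\vecmu_0}(v)$ is precisely the unique maximizer of $w\mapsto I(v+w)$ over $E^-$. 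This yields $I\big(v+h_{\vecmu_0}(v)\big)\ge I(v)$, which eliminates the troublesome negative part and reduces the problem to the inequality $\|v\|^2+\nu|v|_2^2\ge\big((a^2-\nu_-^2)/a^2\big)^{1/2}S^{1/2}|v|_3^2$ for $v\in E^+$ alone. Without this maximization step (or the prior reduction to $J_{\vecmu_0}$, which is needed because $h_\vecmu$ is perturbed by the $\la|u|_p^p$ term), the argument does not go through.

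The tools you propose for the remaining Sobolev-type inequality also do not suffice. Combining $|v|_3^2\le|v|_2|v|_6$ with $S|v|_6^2\le|\nabla v|_2^2$ gives $|v|_3^2\le S^{-1/2}|v|_2|\nabla v|_2$, so you would need a lower bound of $\|v\|^2+\nu|v|_2^2$ by a constant times $|v|_2\,|\nabla v|_2$. In Fourier variables this reads
\[
\int\big[(a^2+|\xi|^2)^{1/2}+\nu\big]|\hat v|^2\,d\xi\ \ge\ c\,\Big(\int|\hat v|^2\,d\xi\Big)^{1/2}\Big(\int|\xi|^2|\hat v|^2\,d\xi\Big)^{1/2},
\]
but by Cauchy--Schwarz the right-hand side dominates $c\int|\xi||\hat v|^2\,d\xi$, and in fact the inequality is false: a profile with most of its mass near $\xi=0$ and a small amount $s$ of mass at a very high frequency $T$ has left side of order $sT$ and right side of order $\sqrt{s}\,T$. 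The paper instead proves the pointwise estimate $(a^2+|\xi|^2)^{1/2}+\nu\ge\big((a^2-\nu_-^2)/a^2\big)^{1/2}|\xi|$ and then invokes the Calder\'on--Lions interpolation theorem (interpolating the trivial $L^2$ embedding with $\dot H^1\hookrightarrow L^6$) to obtain $\int|\xi||\hat v|^2\,d\xi\ge S^{1/2}|v|_3^2$; the passage through the fractional quantity $\int|\xi||\hat v|^2\,d\xi$ is essential and cannot be replaced by the elementary H\"older interpolation you suggest.
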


\begin{proof}
We only give the proof for $\ka>0$ since the subcritical case $\ka=0$ is much easier. Let $(J_{\vecmu}^{red}, h_{\vecmu})$ denote the reduction couple of $J_{\vecmu}$ and let $(v_n)_n$  be a minimizing sequence for $J_{\vecmu}^{red}$ in $\cm^+(J_{\vecmu})$. Setting $u_n=v_n+h_{\vecmu}(v_n)$ it is not difficult to check that $(u_n)_n$ is bounded in $E$, hence it is either vanishing or non-vanishing up to a subsequence (see \cite{Lions}).

If $(u_n)_n$ has a non-vanishing subsequence then we are done, so let us assume to the contrary that $(u_n)_n$ is vanishing, hence $|u_n|_p\to0$. We first show that this implies
\begin{\equ}\label{eq:Jmu0-1}
\ga(J_{\vecmu}) \ge \ga(J_{\vecmu_0})\qquad\text{where }\ \vecmu_0=(\ka,0,\nu,p).
\end{\equ}
In order to see this let $t_n>0$ be defined by $t_nv_n\in\cm^+(J_{\vecmu_0})$. Observe that  $\|v_n\|$ is bounded away from $0$ and the nonlinearity in $J_{\vecmu_0}$ is super-quadratic, so that $(t_n)_n$ is bounded. Theorem~\ref{thm:red-couple}~d) now implies $|h_{\vecmu_0}(t_nv_n)|_p\to0$ where $h_{\vecmu_0}$ is the reduction map for $J_{\vecmu_0}$. Now \eqref{eq:Jmu0-1} follows from
\[
\begin{aligned}
  \ga( J_{\vecmu_0}) &\le J_{\vecmu_0}(t_nv_n+h_{\vecmu_0}(t_nv_n))\\
       &= J_\vecmu(t_nv_n+h_{\vecmu_0}(t_nv_n)) + o_n(1) \le J_{\vecmu}^{red}(v_n) +o_n(1) = \ga( J_{\vecmu})+o_n(1).
\end{aligned}
\]

Next we show that
\begin{\equ}\label{eq:Jmu0-2}
  J_{\vecmu_0}^{red}(v) \ge \frac1{6\ka^2}\left(\frac{\|v\|^2+\nu|v|_2^2}{|v|_3^2}\right)^3
     \qquad\text{for all } v\in\cm^+(J_{\vecmu_0})\,.
\end{\equ}
For this we consider the functional
\[
  I: E\setminus\{0\}\to\R, \quad u\mapsto \frac{\|u^+\|^2-\|u^-\|^2+\nu|u|_2^2}{|u|_3^2}.
\]
For any $v\in E^+$ it is easy to see by a direct argument that $\sup_{w\in E^-}I(v+w)>0$ is achieved by some $w_v\in E^-$. Moreover, for any $c>0$ the set $\{w\in E^-: I(v+w)\ge c\}$ is strictly convex because
\[
  w\mapsto \|v\|^2 - \|w\|^2 + \nu|v+w|_2^2 - c|v+w|_3^2
\]
is strictly concave on $E^-$. This also uses $|\nu|<a$. Hence $w_v$ is the unique critical point of $w\mapsto I(v+w)$. On the other hand, for $v\in \cm^+(J_{\vecmu_0})$, we have
\begin{\equ}\label{eq:Jmu0-3}
  0 = DJ_{\vecmu_0}^{red}(v)[v]
     = \|v\|^2 - \|h_{\vecmu_0}(v)\|^2 + \nu|v+h_{\vecmu_0}(v)|_2^2 - \ka|v+h_{\vecmu_0}(v)|_3^3,
\end{\equ}
hence
\[
  J_{\vecmu_0}^{red}(v) = J_{\vecmu_0}^{red}(v) - \frac12 DJ_{\vecmu_0}^{red}(v)[v]
    = \frac{\ka}6 |v + h_{\vecmu_0}(v)|_3^3.
\]
A direct calculation gives
\[
  DI\big(v+h_{\vecmu_0}(v)\big)\big|_{E^-} = 0 \quad\text{and}\quad I\big(v+h_{\vecmu_0}(v)\big) > 0
\]
which implies $h_{\vecmu_0}(v) = w_v$. Now \eqref{eq:Jmu0-2} follows, using \eqref{eq:Jmu0-3} once more:
\[
  J_{\vecmu_0}^{red}(v) = \frac{\ka}6 |v + h_{\vecmu_0}(v)|_3^3 = \frac1{6\ka^2}I^3(v+h_{\vecmu_0}(v))
          \ge \frac1{6\ka^2}I^3(v)
\]

Finally, the proposition follows from \eqref{eq:Jmu0-1},  \eqref{eq:Jmu0-2} and
\begin{equation}\label{eq:Jmu0-4}
  \frac{\|v\|^2+\nu|v|_2^2}{|v|_3^2} \ge  \Big(\frac{a^2-\nu_-^2}{a^2}\Big)^{\frac12}S^{\frac12}
     \qquad\text{for all } v\in\cm^+(J_{\vecmu_0})\,.
\end{equation}
For the proof of \eqref{eq:Jmu0-4} we pass to the Fourier domain and recall from \cite{Ding-Xu-ARMA} that
\[
  \|u\|^2 = \int_{\R^3}(a^2+|\xi|^2)^{\frac12}|\hat u|^2\,d\xi\qquad\text{for all }u\in E.
\]
Since $|\nu|<a$ we have
\[
  (a^2+t^2)^{\frac12}+\nu \ge \left(\frac{a^2-\nu_-^2}{a^2}\right)^{\frac12}|t|\quad\text{for all } t\in\R
\]
which implies for $v\in E^+\setminus\{0\}$:
\[
\begin{aligned}
   \frac{\|v\|^2+\nu|v|_2^2}{|v|_3^2} &= \frac{\int_{\R^3}[(a^2+|\xi|^2)^{\frac12}+\nu]\cdot|\hat v|^2\,d\xi}{|v|_3^2}
     \ge \left(\frac{a^2-\nu_-^2}{a^2}\right)^{\frac12}\frac{\int_{\R^3}|\xi||\hat u|^2\,d\xi}{|u|_3^2}\\
    & \ge \left(\frac{a^2-\nu_-^2}{a^2}\right)^{\frac12} S^{\frac12}
\end{aligned}
\]
Here the last inequality follows from
\[
  \frac{\int_{\R^3}|\xi|^2|\hat u|^2 d\xi}{|u|_6^2}=\frac{|\widehat{\nabla u}|_2^2}{|u|_6^2}
   =\frac{|\nabla u|_2^2}{|u|_6^2}\ge S \quad \text{for all } u\in H^1(\R^3,\C^4)
\]
and the Calder\'on-Lions interpolation theorem (see \cite{RS}).
\end{proof}

Now we consider the energy functional $I_\nu: E\to\R$ associated to \eqref{eq:limit problem} given by
\begin{\equ}\label{limit problem functional}
  I_\nu(u) = \frac12\big( \|u^+\|^2-\|u^-\|^2 \big) +\frac\nu2|u|_2^2-\int_{\R^3} F(|u|)dx.
\end{\equ}
The hypotheses $(f1)-(f3)$ imply that $I_\nu$ satisfies the assumptions of Theorem~\ref{thm:red-couple} for $|\nu|<a$.

\begin{Lem}\label{critical attained}
  If $\nu_0\in(-a,a)$ satisfies \eqref{eq:inf_Jmu} then $\ga(I_\nu)$ is achieved for all $\nu\in(-a,\nu_0]$. Moreover, the map $\nu\mapsto \ga(I_\nu)$ is continuous and strictly increasing.
\end{Lem}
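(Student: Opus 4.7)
The plan is to mirror the strategy of Proposition~\ref{prop:inf_Jmu} for the general nonlinearity $f$. Fix $\nu\in(-a,\nu_0]$ and take a minimizing sequence $(v_n)\subset\cm^+(I_\nu)$ for $I_\nu^{red}$; Ekeland's variational principle lets me assume $(v_n)$ is a Palais--Smale sequence, so by Theorem~\ref{thm:red-couple}(c), $u_n:=v_n+h_{I_\nu}(v_n)$ is PS for $I_\nu$, with $(u_n)$ bounded in $E$ by the Ambrosetti--Rabinowitz-type condition $(f3)$. Apply Lions' concentration-compactness alternative. In the non-vanishing case, translation invariance of $I_\nu$ allows a shift so that $u_n\weakto u\ne 0$; since $(f1)$--$(f2)$ split $f(|u|)u=\ka|u|u+g(|u|)u$ with $g$ of lower-order growth, a standard Brezis--Lieb decomposition for the critical $\ka$-part combined with Theorem~\ref{thm:red-couple}(d) for the subcritical remainder yields $u\in\cm(I_\nu)$, and Fatou then identifies $I_\nu(u)=\ga(I_\nu)$.

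The vanishing case is the main obstacle and is where the hypothesis \eqref{eq:inf_Jmu} is used. Here $|u_n|_q\to 0$ for every $q\in(2,3)$, so the subcritical contribution to $\int F(|u_n|)\,dx$ becomes asymptotically negligible and the reduction to the pure-critical functional $J^\ka_\nu:=J_{(\ka,0,\nu,p)}$ proceeds exactly as in Proposition~\ref{prop:inf_Jmu}: choose $t_n>0$ with $t_nv_n\in\cm^+(J^\ka_\nu)$, verify that $(t_n)$ is bounded, and deduce $\ga(J^\ka_\nu)\le \ga(I_\nu)+o_n(1)$. Applied to $J^\ka_\nu$, the estimates \eqref{eq:Jmu0-2}--\eqref{eq:Jmu0-4} deliver the Sobolev-type lower bound
\[
  \ga(J^\ka_\nu)\ge \frac{S^{3/2}}{6\ka^2}\left(\frac{a^2-\nu_-^2}{a^2}\right)^{3/2}.
\]
Independently, $(f2)$ yields $I_\nu\le J_\vecmu(\nu)$ pointwise, whence $\ga(I_\nu)\le \ga(J_\vecmu(\nu))\le \ga(J_\vecmu(\nu_0))$ by the $\nu$-monotonicity of $\ga(J_\vecmu(\cdot))$. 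The hypothesis \eqref{eq:inf_Jmu} at $\nu_0$, combined with this chain of inequalities and a careful comparison of the $\nu$-dependent thresholds (exploiting the scaling structure of the pure-critical problem), produces the contradiction that rules out vanishing.

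For strict monotonicity, fix $\nu_1<\nu_2\le\nu_0$ and let $u_2\in\cm(I_{\nu_2})$ attain $\ga(I_{\nu_2})$ (available from the attainment step). Since $I_{\nu_1}(u)=I_{\nu_2}(u)-\tfrac{\nu_2-\nu_1}{2}|u|_2^2<I_{\nu_2}(u)$ strictly for $u\ne 0$, the min--max characterization \eqref{eq:def-ga} gives
\[
  \ga(I_{\nu_1})\le \sup_{u\in\R u_2^+\oplus E^-}I_{\nu_1}(u)<\sup_{u\in\R u_2^+\oplus E^-}I_{\nu_2}(u)=I_{\nu_2}(u_2)=\ga(I_{\nu_2}).
\]
Upper semi-continuity of $\nu\mapsto\ga(I_\nu)$ is obtained by the same device (test with the minimizer at the limiting parameter and use continuous dependence of the functional in $\nu$); together with the established monotonicity this forces continuity.
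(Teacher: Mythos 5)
Your attainment and strict-monotonicity arguments are sound and in the same spirit as the paper's. The attainment step is, as in the paper, a transplant of Proposition~\ref{prop:inf_Jmu} (the paper simply cites ``a similar argument''); one caveat is that your phrase about ``a careful comparison of the $\nu$-dependent thresholds'' glosses over the fact that for $\nu<\nu_0\le 0$ one has $\nu_-^2\ge\nu_{0,-}^2$, so the Sobolev lower bound in \eqref{eq:Jmu0-4} is \emph{weaker} at $\nu$ than at $\nu_0$, and the transfer of \eqref{eq:inf_Jmu} down to $\nu$ is not a one-liner. For strict monotonicity, your use of the minimax description \eqref{eq:def-ga} is correct and arguably cleaner than the paper's, which instead tests $I_{\nu_1}$ along the ray through $u^+$ (with $u$ the minimizer at $\nu_2$) and thereby extracts the \emph{quantitative} estimate $\ga(I_{\nu_1})\le\ga(I_{\nu_2})-\tfrac{\nu_2-\nu_1}{2}|u_1|_2^2$.

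The genuine gap is in the continuity claim. You assert that upper semicontinuity of $\nu\mapsto\ga(I_\nu)$ plus monotonicity forces continuity; this is false. For a monotone increasing function $g$, USC at $\nu$ gives $\lim_{\nu'\downarrow\nu}g(\nu')\le g(\nu)$, which combined with $g(\nu')\ge g(\nu)$ yields right-continuity only. For $\nu'\uparrow\nu$ monotonicity already gives $\limsup g(\nu')\le g(\nu)$, and USC adds nothing; a jump from the left is not excluded (the increasing step function is USC). What is actually needed, and what the paper does, is a \emph{two-sided} estimate: test $I_{\nu_1}$ with the ray through the minimizer of $I_{\nu_2}$ to get $\ga(I_{\nu_1})\le\ga(I_{\nu_2})-\tfrac{\nu_2-\nu_1}{2}|u_1|_2^2$, and symmetrically test $I_{\nu_2}$ with the ray through the minimizer of $I_{\nu_1}$ to get $\ga(I_{\nu_2})\le\ga(I_{\nu_1})+\tfrac{\nu_2-\nu_1}{2}|u_2|_2^2$. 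Together these pin $|\ga(I_{\nu_2})-\ga(I_{\nu_1})|=O(\nu_2-\nu_1)$, provided the scaling parameters $s,t$ putting the transported rays on the respective Nehari--Pankov manifolds remain bounded on compact $\nu$-intervals — a point the paper verifies via the estimate $0<I_{\nu_1}^{red}(su^+)\le\tfrac{s^2}{2}(\|u^+\|^2+\nu_1|u^+|_2^2)-\tfrac{d_p\la}{p}s^p|u^+|_p^p$ and its analogue for $t$, and which your sketch omits. To fix your argument, replace the USC claim with the two-sided test (using minimizers at both $\nu_1$ and $\nu_2$, which you already have from the attainment step) and add the boundedness of the Nehari scalars.
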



\begin{proof}
For $\nu\in(-a,\nu_0]\subset(-a,a)$ assumption $(f3)$ implies $I_{\nu}\leq J_{\vecmu_1}\le J_{\vecmu_2}$, where $\vecmu_1=(\ka,\la,\nu_0,p)$ and $\vecmu_2=(0,\la,\nu_0,p)$. It follows that $\ga(I_{\nu}) \leq \ga(J_{\vecmu_1})\leq\ga(J_{\vecmu_2})$.
A similar argument as in the proof of Proposition~\ref{prop:inf_Jmu} implies the existence of a nontrivial critical point $u_{\nu}$ for $I_{\nu}$ such that $u_{\nu}^+$ is the minimizer for $I_{\nu}^{red}$ on $\cm^+(I_{\nu})$.

In order to prove the monotonicity of $\ga(\nu)$ we consider $-a<\nu_1<\nu_2\leq\nu_0$. Let $u\in\cm(I_{\nu_2})$ be a minimizer for $\ga(I_{\nu_2})$ and define $s>0$ by $su^+\in\cm^+(I _{\nu_1})$. Then we have, with $(I_{\nu_1}^{red},h_{\nu_1})$ denoting the reduction couple for $I_{\nu_1}$ and $u_1:=t_1 u^++h_{\nu_1}(su^+)\in\cm(I_{\nu_1})$:
\[
\aligned
\ga(I_{\nu_1})
   &\le I_{\nu_1}^{red}(su^+) = I_{\nu_1}(u_1) = I_{\nu_2}(u_1) - \frac{\nu_2-\nu_1}2|u_1|_2^2
     \le  I_{\nu_2}^{red}(t_1u^+) - \frac{\nu_2-\nu_1}2|u_1|_2^2 \\
   &\le \max_{t>0}I_{\nu_2}^{red}(tu^+) - \frac{\nu_2-\nu_1}2|u_1|_2^2
     =\ga(I_{\nu_2}) - \frac{\nu_2-\nu_1}2|u_1|_2^2.
\endaligned
\]
Choosing a minimizer $v\in\cm(I_{\nu_1})$ for $\ga(I_{\nu_1})$, defining $t>0$ by $tv^+\in\cm^+(I _{\nu_2})$, and setting $u_2:=tv^++h_{\nu_1}(tv^+)\in\cm(I_{\nu_2})$, an analogous argument shows that
\[
  \ga(I_{\nu_2}) \le \ga(I_{\nu_1}) + \frac{\nu_2-\nu_1}2|u_2|_2^2.
\]
For the continuity of $\ga(\nu)$ it remains to prove that $s,t$ are bounded for $\nu_1,\nu_2$ in a compact subset of $(-a,\nu_0]$ because then $|\ga(I_{\nu_2})-\ga(I_{\nu_1})|=O(\nu_2-\nu_1)$. This follows for $s$ from
\[
  0<I_{\nu_1}^{red}(su^+)
\leq\frac{s^2}2\big( \|u^+\|^2+\nu_1|u^+|_2^2 \big) - \frac{d_p\la}ps^p|u^+|_p^p.
\]
where $d_p>0$ is from  Proposition \ref{lpdec}. The bound for $t$ is proved analogously.
\end{proof}


\section{The truncated problem}\label{sec:truncation}

For a subset $\Lam\subset\R^3$, let $\Lam^c$ denote its complement, and $\Lam^\eps:=\big\{x\in\R^3:\, \eps x\in\Lam \big\}$, $\eps>0$. By the change of variables $x\mapsto \eps x$ and setting $V_\eps(x)=V(\eps x)$, the singularly perturbed problem \eqref{nd2} is equivalent to
\begin{\equ}\label{Dirac0}
  -i  \al\cdot\nabla u + a\be u  +V_\eps(x)u = f(|u|)u.
\end{\equ}
In the sequel, we will modify the function $f$ similar to \cite{Del-Pino, Del-Pino2}. For
\begin{\equ}\label{eq:def-de0}
  0 < \de_0 \le \frac{a-|V|_\infty}{4},
\end{\equ}
we define $\tilde{f}=\tilde{f}_{\de_0}\in C^1(\R^+_0)$ by $\tilde{f}(0)=0$ and
\[
  \frac{d}{ds}\big(\tilde f (s)s \big) = \min\big\{f'(s)s+f(s),\,\de_0\big\}.
\]
In the subcritical case $\ka=0$ of Theorem~\ref{main result} the choice $\de_0 = \frac{a-|V|_\infty}{4}$ will be fine. For the critical case $\ka>0$ we need to make $\de_0$ smaller in the course of the proof. Let $\tilde F(s)=\int_0^s\tilde f(t)t\,dt$ be the primitive of $\tilde{f}(s)s$. By our assumptions on $V$ there exists $R_1>0$ so that
\begin{\equ}\label{pa V}
  \nabla V(x)\notin \R x\quad \text{for all $x\in \R^3$ with $|x|=R_1$ and $V(x)=V(0)$,}
\end{\equ}
see \cite{DPR}. We define the cut-off function $\chi:\R^3\to[0,1]$ by
\begin{\equ}\label{chi}
\chi(x)=\begin{cases}
          1, & \mbox{if } |x|< R_1 \\
          \frac{2R_1-|x|}{R_1}, & \mbox{if } R_1\le |x|< 2R_1 \\
          0, & \mbox{if } |x|\ge 2R_1.
        \end{cases}
\end{\equ}
and consider
\[
  g(x,s)=\chi(x) f(s) + \big(1-\chi(x)\big)\tilde f(s)
\]
as well as
\[
  G(x,s) = \int_0^sg(x,t)tdt = \chi(x)F(s)+\big(1-\chi(x)\big)\tilde F(s).
\]
For later use, associated to the above notations,
we denote $B_1=B(0,R_1)$ and $B_2=B(0,2R_1)$
the open balls in $\R^3$ of radius $R_1$ and $2R_1$.
The following lemma is easy to prove.

\begin{Lem}\label{mod}
  The function $G(x,s)$ satisfies the conditions $(i)-(iii)$ from Theorem~\ref{thm:red-couple}.
\end{Lem}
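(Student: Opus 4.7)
The plan is to verify each of (i), (ii), (iii) for the convex combination
\[
  g(x,s) = \chi(x) f(s) + (1-\chi(x)) \tilde f(s), \qquad 0 \le \chi(x) \le 1,
\]
by reducing to the corresponding statement for $f(s)s$ and $\tilde f(s)s$ separately, then using the bound on $\chi$ to patch. This way the whole lemma reduces to a handful of one-variable checks on $f$ and its truncation $\tilde f$, together with the elementary remark that a convex combination of two functions with the same qualitative property still has that property.

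For (i), the nonnegativity of $f(s)s$ for $s \ge 0$ follows from $f(0)=0$ combined with $f'(s) > 0$ on $(0,\infty)$ given by $(f1)$; the nonnegativity of $\tilde f(s)s$ is built into the construction via $\tilde f(0)=0$ and $(\tilde f(s)s)' = \min\{(f(s)s)', \de_0\} \ge 0$. For (ii), continuity of $f$ and $\tilde f$ at $0$ with value $0$ immediately yields $f(s)s = o(s)$ and $\tilde f(s)s = o(s)$ as $s \to 0^+$; uniformity in $x$ is automatic since the two building blocks $f(s)s$ and $\tilde f(s)s$ themselves do not depend on $x$.

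For (iii), differentiate to obtain
\[
  \pa_s\bigl(g(x,s)s\bigr) = \chi(x)(f(s)s)' + (1-\chi(x))(\tilde f(s)s)',
\]
with both pieces nonnegative (by the monotonicity of $f(s)s$ from $(f1)$ and by the min-construction of $\tilde f$). For the upper bound, $(\tilde f(s)s)' \le \de_0$ is immediate, and $(f(s)s)' = f'(s)s + f(s)$ is controlled using $(f2)$ — specifically the asymptotic $f'(s) \to \ka$ at infinity combined with the $\cc^1$-regularity of $f$ on $(0,\infty)$ — to produce the required linear bound. This last estimate on $(f(s)s)'$ is the only point that is not entirely automatic from the definitions and is where one has to be careful about the interplay between the small-$s$ and large-$s$ behaviors of $f'$; it is therefore the main (and only) technical ingredient of the proof.
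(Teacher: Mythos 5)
Your verifications of (i), (ii), and the lower bound in (iii) are correct and follow the obvious route of reducing to one-variable statements about $f$ and $\tilde f$ and using that the convex combination with $0\le\chi\le1$ preserves nonnegativity and the $o(s)$ behavior.

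The problem is the upper bound in (iii), which is exactly the step you isolate as the ``main and only technical ingredient'' and then do not carry out --- and which, as you state it, cannot be carried out. By $(f2)$ and $f'>0$,
\[
  \pa_s\bigl(f(s)s\bigr) = f'(s)s + f(s) \ge f(s) \ge \la s^{p-2},
  \qquad p-2\in(0,1),
\]
so $\pa_s\bigl(f(s)s\bigr)/s \ge \la s^{p-3}\to\infty$ as $s\to0^+$. On $B_1$, where $\chi\equiv1$ and hence $g(x,\cdot)=f$, the inequality $\pa_s\bigl(g(x,s)s\bigr)\le Cs$ therefore already fails for small $s$. The truncation does not help near zero either: for $s$ small, $(\tilde f(s)s)'=\min\{(f(s)s)',\de_0\}=(f(s)s)'$, so the constant bound $(\tilde f(s)s)'\le\de_0$, while true, is $O(1)$ and not $O(s)$ there. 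And the appeal to ``$\cc^1$-regularity of $f$ on $(0,\infty)$'' is the wrong tool: regularity on the open half-line gives no bound as $s\to0^+$, and in fact $f(s)\ge\la s^{p-2}$ together with $f(0)=0$ forces $\limsup_{s\to0^+}f'(s)=\infty$ by the mean value theorem, so there is no hidden small-$s$ control to extract from $(f1)$--$(f2)$.

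What the reduction machinery cited by the paper (\cite{Ackermann, Ding-Xu-Trans}) actually requires near $s=0$ is a sub/critical growth bound of the form $\pa_s\bigl(g(x,s)s\bigr)\le C(s^{p-2}+s)$, or equivalently $\le C(1+s)$, not a linear bound for \emph{all} $s>0$. With that reading the lemma is indeed routine: near $0$ the estimate follows from $(f1)$--$(f3)$ together with the $p$-power structure, near $\infty$ from $f'(\infty)=\ka$, and the convex combination with $\chi$ and the truncation $\tilde f$ preserve both. So either condition (iii) in Theorem~\ref{thm:red-couple} should be read as a $(1+s)$-type bound, or you must argue at that weaker level; either way, the ``required linear bound'' you claim to produce is false and needs to be replaced by the correct growth estimate.
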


We will consider the truncated problem
\begin{\equ}\label{truncated dirac}
  -i \al\cdot\nabla u+a\be u+V_\eps(x)u = g_\eps(x,|u|)u, \quad u\in E
\end{\equ}
where we write $g_\eps(x,s)=g(\eps x,s)$; we also use the notations $\chi_\eps$ and $G_\eps$ for the dilations of $\chi$ and $G$, respectively. The corresponding energy functional is
\[
  \Phi_\eps(u) = \frac12\big(\|u^+\|^2-\|u^-\|^2 \big) + \frac12 \int_{\R^3} V_\eps(x)|u|^2\,dx - \int_{\R^3} G_\eps(x,|u|)\,dx.
\]
As a direct consequence of Lemma \ref{mod}, we can introduce
$(\Phi_\eps^{red},h_\eps)$ as the reduction couple of $\Phi_\eps$.

In order to establish a compactness result for $\Phi_\eps$, we first prove a bound for Palais-Smale sequences of $\Phi_\eps$ that is uniform in $\eps$.

\begin{Lem}\label{boundedness}
  For $c\in\R$ fixed, $(PS)_c$-sequences of $\Phi_\eps$ are bounded uniformly in $\eps$.
\end{Lem}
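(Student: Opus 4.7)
The plan is to argue by contradiction: assume $\|u_n\|\to\infty$ along a subsequence, and combine two estimates with constants independent of $\eps$ to derive a contradiction. For the first, I would extract an Ambrosetti--Rabinowitz-type estimate from the identity
\[
  \Phi_\eps(u_n)-\tfrac12\Phi_\eps'(u_n)[u_n]=\int_{\R^3}\Bigl(\tfrac12 g_\eps(x,|u_n|)|u_n|^2-G_\eps(x,|u_n|)\Bigr)\,dx=c+o(\|u_n\|).
\]
On the core region $\{\chi_\eps=1\}$, condition $(f3)$ together with the lower bound on $f$ from $(f2)$ yields the pointwise inequality $\tfrac12 f(s)s^2-F(s)\ge\tfrac{\theta-2}{6\theta}\kappa s^3+\tfrac{\theta-2}{2\theta}\lambda s^p$, while on $\{\chi_\eps=0\}$ the truncation gives $\tilde F(s)\le\tfrac{\delta_0}{2}s^2$. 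After estimating $|u_n|_2^2\le\|u_n\|^2/a$, I obtain the inner estimate
\[
  \int\chi_\eps\bigl(\kappa|u_n|^3+\lambda|u_n|^p\bigr)\,dx\le C(1+\|u_n\|)+\tfrac{C\delta_0}{a}\|u_n\|^2.
\]

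Next I would extract a coercivity estimate by testing $\Phi_\eps'(u_n)$ against the canonical direction $u_n^+-u_n^-$:
\[
  \|u_n\|^2=\Phi_\eps'(u_n)[u_n^+-u_n^-]-\int V_\eps(|u_n^+|^2-|u_n^-|^2)\,dx+\int g_\eps(|u_n^+|^2-|u_n^-|^2)\,dx.
\]
The $V$-contribution is bounded by $\tfrac{|V|_\infty}{a}\|u_n\|^2$. For the $g$-contribution I split across $B_2^\eps$: outside, $g_\eps=\tilde f\le\delta_0$ gives $\tfrac{\delta_0}{a}\|u_n\|^2$; inside, the linear growth $f(s)\le Cs$ (guaranteed by condition (iii) of Theorem~\ref{thm:red-couple} applied to $g$), together with H\"older's inequality and Proposition~\ref{lpdec}, yields $\le C'|u_n|_3^3$. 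With $\delta_0\le(a-|V|_\infty)/4$, the resulting coercivity coefficient is bounded below by $\tfrac{3(a-|V|_\infty)}{4a}>0$, uniformly in $\eps$:
\[
  \tfrac{3(a-|V|_\infty)}{4a}\|u_n\|^2\le C'|u_n|_3^3+o(\|u_n\|).
\]

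Finally I close the loop by decomposing $|u_n|_3^3=\int\chi_\eps|u_n|^3+\int(1-\chi_\eps)|u_n|^3$: the first summand is controlled by the inner estimate, while for the second I exploit that on $\{\chi_\eps<1\}$ the effective nonlinearity $\tilde f$ is subcritical with modulus $\delta_0<a-|V|_\infty$, so a localized test-function argument applied to the truncated equation combined with the Sobolev embedding on the outer region yields $\int(1-\chi_\eps)|u_n|^3=o(\|u_n\|^2)$. Plugging back into the coercivity estimate gives $\|u_n\|^2\le C(1+\|u_n\|)$, contradicting $\|u_n\|\to\infty$. The hard part will be this last step: the critical-growth term $\kappa|u|^3$ in $(f2)$ makes it nontrivial to obtain a subquadratic bound on the outer $L^3$-integral with constants independent of $\eps$, and this is where the choice $\delta_0\le(a-|V|_\infty)/4$ relative to the Dirac spectral gap must be used sharply.
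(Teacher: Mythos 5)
You correctly identify the two sources of information: the Ambrosetti--Rabinowitz-type identity $\Phi_\eps(u_n)-\frac12\Phi'_\eps(u_n)[u_n]$ and testing $\Phi'_\eps(u_n)$ against $u_n^+-u_n^-$. The paper uses exactly these two inputs. But your way of combining them has a genuine gap, and it is the gap you yourself flag at the end.

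The problem is the growth exponent produced by the coercivity step. When you estimate the $g$-contribution on $B_2^\eps$, a H\"older split of $\int_{B_2^\eps} g_\eps(x,|u_n|)\bigl(|u_n^+|^2-|u_n^-|^2\bigr)\,dx$ gives at best $C\,|u_n|_{L^3(B_2^\eps)}\,\|u_n\|^2$, not $C\,|u_n|_3^3$: you cannot put both spinor factors into the ``good'' region, so the best you extract from the inner AR estimate is a factor of order $\bigl(1+o(\|u_n\|)\bigr)^{1/3}$ multiplying $\|u_n\|^2$. This is at least quadratic, and in fact super-quadratic when $\|u_n\|\to\infty$, so the inequality $\frac{3(a-|V|_\infty)}{4a}\|u_n\|^2\le C(1+o(\|u_n\|))^{1/3}\|u_n\|^2+o(\|u_n\|)$ gives no contradiction. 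Your proposed rescue --- showing $\int(1-\chi_\eps)|u_n|^3=o(\|u_n\|^2)$ by ``a localized test-function argument'' --- is not substantiated, and more importantly would not help: even with the outer piece gone, the inner piece already grows too fast with your choice of H\"older exponents.

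The paper avoids this by a sharper pointwise split before H\"older. On the set $\{f(|u_n|)\le\delta_0\}$ the integrand is absorbed into the $\delta_0|u_n|_2^2$ term, and on $\{f(|u_n|)>\delta_0\}$ one applies H\"older with exponents $\tfrac32$ and $3$ to the \emph{product} $f(|u_n|)|u_n|\cdot|u_n^+-u_n^-|$, producing $\bigl(\int\chi_\eps(f(|u_n|)|u_n|)^{3/2}\bigr)^{2/3}|u_n^+-u_n^-|_3$. The crucial observation --- which needs the restriction $f>\delta_0$, since $f(s)\le Cs$ fails near $s=0$ because $f(s)\ge\lambda s^{p-2}$ with $p<3$ --- is that on this set $(f(s)s)^{3/2}\le C_\theta\,(f(s)s^2-2F(s))$, so the whole expression is bounded by $C_\theta\bigl(2\Phi_\eps(u_n)-\Phi'_\eps(u_n)[u_n]\bigr)^{2/3}\|u_n\|=C_\theta\bigl(2c+o(\|u_n\|)\bigr)^{2/3}\|u_n\|$. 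The exponent $2/3$ is what makes the nonlinear contribution grow like $o(\|u_n\|^{5/3})$, strictly sub-quadratic, so that after absorbing the linear terms via $\delta_0\le\frac{a-|V|_\infty}{4}$ one gets $\|u_n\|^2\le C(1+o(\|u_n\|))^{2/3}\|u_n\|+o(\|u_n\|)$ and hence boundedness. Your proposal never manufactures this fractional power, and as written the loop does not close.
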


\begin{proof}
Given a $(PS)_c$-sequence $(u_n)_n$ for $\Phi_\eps$ we have by our conditions on $f$:
\[
\aligned
&\int_{\R^3} \chi_\eps(x)f(|u_n|)|u_n|\cdot|u_n^+-u_n^-| dx \\
&\qquad \leq \Big(\int_{\R^3}\chi_\eps(x)\big(f(|u_n|)|u_n|\big)^{\frac32}dx
\Big)^{\frac23}\cdot \big|u_n^+-u_n^-\big|_3
+\de_0\int_{\R^3}\chi_\eps(x)|u_n|\cdot|u_n^+-u_n^-|dx\\
&\qquad \leq C_\theta\Big(\int_{\R^3}\chi_\eps(x)\big(f(|u_n|)|u_n|^2
-2F(|u_n|)\big)dx\Big)^{\frac23}  \|u_n\|
+\de_0 |u_n|_2^2, 
\endaligned
\]
where $C_\theta>0$ only depends on the constant $\theta>2$ in $(f2)$. It follows from \eqref{eq:def-de0} that
\[
\aligned
\Big( 1-\frac{|V|_\infty}a\Big)\|u_n\|^2 &\leq\Phi_\eps'(u_n)[u_n^+-u_n^-]+\int_{\R^3} g_\eps( x,|u_n|)|u_n|\cdot|u_n^+-u_n^-|dx\\
&\leq C_\theta\Big(2\Phi_\eps(u_n)-\Phi_\eps'(u_n)[u_n]\Big)^{\frac23}\|u_n\| +2 \de_0|u_n|_2^2+o(\|u_n\|).
\endaligned
\]
Now the lemma follows using \eqref{l2ineq}:
\begin{\equ}\label{ps bdd}
\Big(1-\frac{|V|_\infty+2\de_0}a\Big)\|u_n\|^2 \le C_\theta\big(2(c+o(1))+o(\|u_n\|)\big)^{\frac23}\|u_n\|+o(\|u_n\|).
\end{\equ}
\end{proof}

Now we can prove the Palais-Smale condition for $\Phi_\eps$. Recall that the nonlinearity $G$ in $\Phi_\eps$ depends on a constant $\de_0$; see \eqref{eq:def-de0}.

\begin{Prop}\label{PS condition}
If
\[
  \ka^2\cdot c_0< \left(\frac{a^2-|V|_\infty^2}{a^2}\right)^{\frac32}\cdot \frac{S^{\frac32}}6,
\]
then there exists $\de_0>0$ such that the truncated functional $\Phi_\eps$ satisfies the $(PS)_c$-condition for all $c\leq c_0$, all $\eps>0$.
\end{Prop}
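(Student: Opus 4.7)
The plan is classical concentration--compactness, adapted to the strongly indefinite Dirac setting. Let $(u_n)_n\subset E$ be a $(PS)_c$-sequence with $c\le c_0$. By Lemma~\ref{boundedness} together with the choice $\de_0\le(a-|V|_\infty)/4$, the sequence is bounded in $E$, so up to a subsequence $u_n\weakto u_\infty$ in $E$, $u_n\to u_\infty$ in $L^q_{\loc}$ for $q\in[2,3)$, and pointwise a.e. A standard computation using Proposition~\ref{invariant under derivatives} and local compactness shows $\Phi_\eps'(u_\infty)=0$, while $(f3)$ forces $\Phi_\eps(u_\infty)\ge 0$. Setting $w_n:=u_n-u_\infty$, a Brezis--Lieb type splitting of both the functional and of its derivative, in which the subcritical contribution $\la\chi_\eps|u|^{p-2}u$ supported in $B_2^\eps$ vanishes in the limit by the compact embedding $E\hookrightarrow L^p_{\loc}$, delivers $\Phi_\eps(w_n)\le c_0+o(1)$ and $\Phi_\eps'(w_n)=o(1)$.

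The next step is to squeeze $w_n$ between two inequalities that involve only its critical part. Testing $\Phi_\eps'(w_n)$ against $w_n^+-w_n^-$ and using the bound $\tilde f(s)\le\de_0$ on $\{|x|\ge R_1/\eps\}$ together with $|V_\eps|\le|V|_\infty$ yields the coercivity estimate
\[
  \Big(1-\tfrac{|V|_\infty+2\de_0}{a}\Big)\|w_n\|^2\le \ka\int_{\R^3}\chi_\eps|w_n|^2\cdot|w_n^+-w_n^-|\,dx+o(1).
\]
The identity $\Phi_\eps(w_n)-\tfrac12\Phi_\eps'(w_n)[w_n]=\tfrac{\ka}{6}\int\chi_\eps|w_n|^3\,dx+O(\de_0\|w_n\|^2)+o(1)$ simultaneously gives
\[
  \tfrac{\ka}{6}\int_{\R^3}\chi_\eps|w_n|^3\,dx\le c_0+O(\de_0)+o(1).
\]
Finally, the Fourier--Sobolev inequality derived in the proof of \eqref{eq:Jmu0-4}, applied to both $w_n^+$ and $w_n^-$ (using the symmetric representation $\|u\|^2=\int(a^2+|\xi|^2)^{1/2}|\hat u|^2$), produces
\[
  \Big(\tfrac{a^2-|V|_\infty^2}{a^2}\Big)^{1/2}S^{1/2}\,|w_n^\pm|_3^2 \le \|w_n^\pm\|^2-|V|_\infty|w_n^\pm|_2^2,
\]
which, after absorbing the $V_\eps$-term by taking $\de_0$ small, controls $|w_n|_3^2$ by $\|w_n\|^2$.

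Combining the three displays via Hölder leads to a dichotomy: either $|w_n|_3\to 0$, or
\[
  \ka^2 c \ge \Big(\tfrac{a^2-|V|_\infty^2}{a^2}\Big)^{3/2}\cdot\tfrac{S^{3/2}}{6} - O(\de_0),
\]
which, once $\de_0$ is chosen small enough, contradicts $\ka^2 c_0 < ((a^2-|V|_\infty^2)/a^2)^{3/2}S^{3/2}/6$. In the remaining case $|w_n|_3\to 0$, interpolation with the uniform $L^2$-bound forces $|w_n|_q\to 0$ for every $q\in(2,3]$, and feeding this back into the coercivity inequality yields $\|w_n\|\to 0$, hence $u_n\to u_\infty$ strongly. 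The main obstacle will be the bookkeeping: one must calibrate $\de_0$ so that the $O(\de_0)$ truncation errors neither spoil the critical threshold constant nor the coercivity factor $1-(|V|_\infty+2\de_0)/a$, while applying the Fourier--Sobolev bound uniformly to both $w_n^\pm$ and tracking that the subcritical and truncated pieces of $g_\eps$ contribute only $o(1)$ to the Brezis--Lieb decomposition.
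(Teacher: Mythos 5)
Your overall architecture is the same as the paper's: boundedness via Lemma~\ref{boundedness}, a Brezis--Lieb decomposition of $\Phi_\eps$ and $D\Phi_\eps$ along $u_n = u_\infty + w_n$, a test of $D\Phi_\eps(w_n)$ against $w_n^+-w_n^-$, and a comparison with the energy identity to extract a lower bound on the critical concentration mass $b:=\lim\int\chi_\eps|w_n|^3$. However, there is a genuine gap in the constant-tracking of the crucial middle step, and it is not of the kind that gets repaired by choosing $\de_0$ small.

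Your chain runs through two lossy steps. First, the coercivity estimate isolates the factor $1 - (|V|_\infty+2\de_0)/a$ in front of $\|w_n\|^2$; this loses a factor against the sharper quantity $\|w_n\|^2 - (|V|_\infty+2\de_0)|w_n|_2^2$, since $\|w_n\|^2-\nu|w_n|_2^2 \ge \big(1-\tfrac{\nu}{a}\big)\|w_n\|^2$ is strict unless $|w_n|_2^2 = \|w_n\|^2/a$. Second, you invoke the Fourier--Sobolev estimate separately for $w_n^+$ and $w_n^-$ and then recombine via $|w_n|_3 \le |w_n^+|_3 + |w_n^-|_3$ and $|w_n^+-w_n^-|_3 \le |w_n^+|_3 + |w_n^-|_3$, which injects an extra constant from Proposition~\ref{lpdec}. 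Running the arithmetic, instead of the threshold $\big(\tfrac{a^2-|V|_\infty^2}{a^2}\big)^{3/2}\tfrac{S^{3/2}}{6}$ you arrive at a strictly smaller constant---roughly $\big(\tfrac{a-|V|_\infty}{a}\big)^3$ times another $O(1)$ loss. This is not an $O(\de_0)$ error: it survives as $\de_0\to 0$, so your argument only proves the $(PS)_c$-condition on a strictly smaller interval of $c_0$ than the proposition asserts.

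The paper avoids this by never passing through a $\|\cdot\|$-coercivity estimate or through the separate $\pm$-components. After rearranging the tested derivative identity, it bounds the left-hand side from below directly by
\[
 \|w_n^+\|^2 + \|w_n^-\|^2 - (|V|_\infty+\de_0)|w_n|_2^2
  = \int_{\R^3}\bigl[(a^2+|\xi|^2)^{1/2} - (|V|_\infty+\de_0)\bigr]|\hat{w}_n|^2\,d\xi
  \ge \Bigl(\tfrac{a^2-(|V|_\infty+\de_0)^2}{a^2}\Bigr)^{1/2} S^{1/2}\,|w_n|_3^2,
\]
the key point being that $\|w_n^+\|^2+\|w_n^-\|^2 = \int(a^2+|\xi|^2)^{1/2}|\hat w_n|^2$ and $|w_n^+|_2^2+|w_n^-|_2^2 = |w_n|_2^2$ combine into a \emph{single} Fourier integral over the full $w_n$. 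This single-step estimate is sharp and produces exactly the constant in \eqref{eq:Jmu0-4}, and the rest of the contradiction goes through as you describe. So the fix is structural: replace the coercivity inequality and the pointwise $\pm$-splitting of the Fourier--Sobolev bound by the combined Fourier representation above, applied to $w_n$ as a whole. With that replacement your sketch becomes essentially the paper's argument.
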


\begin{proof}
We choose $\de_0>0$ so that
\[
  \left( \frac{a^2-|V|_\infty^2}{a^2} \right)^{\frac32}\frac{S^{\frac32}}6
   > \left( \frac{a^2-(|V|_\infty+\de_0)^2}{a^2} \right)^{\frac32}\frac{S^{\frac32}}6
   > \ka^2\cdot c_0.
 \]
Let $(u_n)_n$ be a $(PS)_c$-sequence for $\Phi_\eps$ with $c\leq c_0$, any $\eps>0$. By Lemma \ref{boundedness} there exists $u\in E$ such that, along a subsequence, $u_n\weakto u$ in $E$ and $u_n\to u$ strongly in $L_{loc}^q$ for $q\in[2,3)$. We want to show that $u_n\to u$ strongly in $E$.

Set $z_n=u_n-u$ so that $z_n\weakto 0$ in $E$ and $\|u_n^\pm\|^2=\|u^\pm\|^2+\|z_n^\pm\|^2+o_n(1)$. Note that
\[
  \lim_{s\to0}\tilde f(s)=\lim_{s\to\infty}\frac{\tilde f(s)}s=0\qquad\text{and}\qquad
  \lim_{s\to0} f(s)=\lim_{s\to\infty}\frac{ f(s)}s-\ka=0.
\]
By the Brezis-Lieb lemma (see for instance \cite[Lemma~1.32]{Willem}) there holds
\[
  \int_{\R^3}G_\eps(x,|u_n|)
    = \int_{\R^3}G_\eps(x,|u|)+\int_{\R^3}\big(1-\chi_\eps(x)\big)\tilde F(|z_n|)+\frac\ka3\int_{\R^3}\chi_\eps(x)|z_n|^3+o_n(1),
\]
and
\[
  \int_{\R^3}g_\eps(x,|u_n|)|u_n|^2
   = \int_{\R^3}g_\eps(x,|u|)|u|^2 +\int_{\R^3}\big(1-\chi_\eps(x)\big)\tilde f(|z_n|)|z_n|^2+\ka\int_{\R^3}\chi_\eps(x)|z_n|^3+o_n(1).
\]
Therefore
\[
  \Phi_\eps(u_n)=\Phi_\eps(u)+\Phi_\eps(z_n)+o_n(1),
\]
and
\[
  D\Phi_\eps(u_n)[u_n]=D\Phi_\eps(u)[u]+D\Phi_\eps(z_n)[z_n]+o_n(1).
\]
Obviously, $D\Phi_\eps(u)=0$, hence $D\Phi_\eps(z_n)[z_n]=o_n(1)$. We claim that
\begin{equation}\label{claim0}
  D\Phi_\eps(z_n)\to 0 \ \text{as }n\to\infty.
\end{equation}
In fact, consider $\va\in E$ with $\|\va\|\le 1$ and set $g^1(x,s)=g(x,s)-\ka \chi(x) s$. We have
\begin{eqnarray}\label{claim1}
D\Phi_\eps(u_n)[\va]
  &=& \inp{u_n^+-u_n^-}{\va} +\real\int_{\R^3} V_\eps(x)u_n\cdot\bar\va - \real\int_{\R^3} g_\eps(x,|u_n|)u_n\cdot\bar\va  \nonumber \\
  &=& \inp{z_n^+}{\va^+}-\inp{z_n^-}{\va^-} + \inp{u^+}{\va^+}-\inp{u^-}{\va^-} \nonumber \\
&  &\quad +\real\int_{\R^3} V_\eps(x)z_n\cdot\bar\va + \real\int_{\R^3} V_\eps(x)u\cdot\bar\va  \nonumber \\
&  &\quad -\real\int_{\R^3} g_\eps^1(x,|z_n|)z_n\cdot\bar\va - \real\int_{\R^3} g_\eps^1(x,|u|)u\cdot\bar\va \nonumber\\
&  &\quad -\ka\cdot\real\int_{\R^3}\chi_\eps(x)|z_n+u|(z_n+u)\cdot\bar\va + o_n(\|\va\|)
\end{eqnarray}
where we used $u_n=z_n+u$ and $D\Phi_\eps(u)=0$. The estimate for the subcritical part
\[
  \real\int_{\R^3} g_\eps^1(x,|u_n|)u_n\cdot\bar\va-\real\int_{\R^3} g_\eps^1(x,|z_n|)z_n\cdot\bar\va - \real\int_{\R^3} g_\eps^1(x,|u|)u\cdot\bar\va
   = o_n(\|\va\|)
\]
follows from a standard argument in \cite[Lemma 7.10]{Ding2}. To estimate the last integral in \eqref{claim1}, we set $\psi_n:=|z_n+u|(z_n+u)-|z_n|z_n-|u|u$ and observe $|\psi_n|\leq 2|z_n|\cdot|u|$. By the Egorov theorem there exists $\Theta_\si\subset B_2^\eps$ such that $\meas(B_2^\eps\setminus\Theta_\si)<\si$ and $z_n\to0$ uniformly on $\Theta_\si$ as $n\to\infty$. Thus, by the H\"older inequality, we have
\[
\aligned
&\int_{\R^3}\chi_\eps(x)|\psi_n|\cdot|\va| \le \int_{\Theta_\si}|\psi_n|\cdot|\va| + \int_{B_2^\eps\setminus\Theta_\si}|\psi_n|\cdot|\va|\\
&\qquad \leq\int_{\Theta_\si}|\psi_n|\cdot|\va| + 2\left( \int_{B_2^\eps\setminus\Theta_\si}|z_n|^3 \right)^{\frac13}
\cdot\left( \int_{B_2^\eps\setminus\Theta_\si}|u|^3 \right)^{\frac13}\cdot\left( \int_{B_2^\eps\setminus\Theta_\si}|\va|^3 \right)^{\frac13}.
\endaligned
\]
The first integral in the last line converges to $0$ as $n\to\infty$ and the remaining integrals go to $0$ uniformly in $n$ as $\si\to0$. This shows
\[
\int_{\R^3}\chi_\eps(x)|\psi_n|\cdot|\va|=o_n(\|\va\|) \quad \text{as } n\to\infty
\]
and consequently, using again $D\Phi_\eps(u)=0$,
\[
\aligned
  D\Phi_\eps(u_n)[\va]
    &= \inp{z_n^+}{\va^+}-\inp{z_n^-}{\va^-} + \inp{u^+}{\va^+} - \inp{u^-}{\va^-}\\
    &\hspace{1cm}
           +\real\int_{\R^3} V_\eps(x)z_n\cdot\bar\va + \real\int_{\R^3} V_\eps(x)u\cdot\bar\va\\
    &\hspace{1cm}
           -\real\int_{\R^3} g_\eps^1(x,|z_n|)z_n\cdot\bar\va - \real\int_{\R^3} g_\eps^1(x,|u|)u\cdot\bar\va \\
    &\hspace{1cm}
           -\ka\cdot\real\int_{\R^3}\chi_\eps(x)|z_n|z_n\cdot\bar\va - \ka\cdot\real\int_{\R^3}\chi_\eps(x)|u|u\cdot\bar\va +  o(\|\va\|)\\
    &= D\Phi_\eps(z_n)[\va] + D\Phi_\eps(u)[\va] + o_n(\|\va\|)\\
    &= D\Phi_\eps(z_n)[\va] + o_n(\|\va\|)
\endaligned
\]
It follows that $D\Phi_\eps(z_n) \to 0$ as $n \to \infty$ as claimed in \eqref{claim0}. Now $D\Phi_\eps(z_n)[z_n^+-z_n^-] = o_n(1)$ reads as
\[
\aligned
&\|z_n\|^2+\real\int_{\R^3}V_\eps(x)z_n\cdot\ov{(z_n^+-z_n^-)}\\
&\quad
 = \int_{\R^3}\big(1-\chi_\eps(x)\big)\tilde f(|z_n|)z_n \cdot\ov{(z_n^+-z_n^-)}
    + \ka\cdot\real\int_{\R^3}\chi_\eps(x)|z_n|z_n \cdot \ov{(z_n^+-z_n^-)} + o_n(1).
\endaligned
\]
Then, by using the fact $\tilde f(s)\leq\de_0$ and \eqref{eq:Jmu0-4}, we obtain
\[
\left( \frac{a^2-(|V|_\infty+\de_0)^2}{a^2} \right)^{\frac12}S^{\frac12}\left( \int_{\R^3}\chi_\eps(x)|z_n|^3dx \right)^{\frac23}
  \le \ka\cdot\int_{\R^3}\chi_\eps(x)|z_n|^3 dx + o_n(1).
\]
If $b:=\lim_{n\to\infty}\int_{\R^3}\chi_\eps(x)|z_n|^3 dx=0$ then $\|z_n\|=o_n(1)$ and $u_n\to u$ strongly in $E$, as claimed. Suppose to the contrary that $b>0$ so that
\[
  \left( \frac{a^2-(|V|_\infty+\de_0)^2}{a^2} \right)^{\frac32}S^{\frac32}
    \le \ka^3 \cdot \int_{\R^3}\chi_\eps(x)|z_n|^3 dx = \ka^3 \cdot b + o_n(1).
\]
In case $\ka=0$, this is a contradiction. In case $\ka>0$, using
\[
  \Phi_\eps(u) = \int_{\R^3} \frac12 g_\eps(x,|u|)|u|^2-G_\eps(x,|u|) \geq 0,
\]
as well as $\Phi_\eps(u_n)\geq\Phi_\eps(z_n)+o_n(1)$ and $D\Phi_\eps(z_n)[z_n] = o_n(1)$, we obtain the contradiction
\[
\ka^2\cdot c + o_n(1)\geq
  \frac{\ka^3}6\cdot b+o_n(1)
  \geq \left( \frac{a^2-(|V|_\infty+\de_0)^2}{a^2} \right)^{\frac32}\frac{S^{\frac32}}6+o_n(1).
\]
\end{proof}

We finish this section with a couple of notations that will be of use later. For simplicity, when $\nu$ belongs to the range of $V(x)$ that is $\nu\in\{V(x):\, x\in\R^3\}$,
we denote $\nu_0=V(0)$ and correspondingly
\begin{\equ}\label{notations}
I_{\nu_0}=I_{V(0)}, \quad I_{\nu_0}^{red}=I_{V(0)}^{red}, \quad  \ga(I_{\nu_0})=\ga(I_{V(0)}).
\end{\equ}
Moreover, given arbitrarily $y\in\R^3$, we can define the functional $\Phi_y: E\to\R$,
\[
\Phi_y(u)=\frac12\big( \|u^+\|^2-\|u^-\|^2 \big)+\frac{V(y)}2|u|_2^2-\int_{\R^3}G(y,|u|)dx,
\]
and $(\Phi_y^{red}, h_y)$ the reduction couple associated to $\Phi_y$.
Plainly, the critical point of $\Phi_y$ are solutions of the problem
\[
-i\al\cdot\nabla u+ a\be u + V(y)u=g(y,|u|)u.
\]
When $y\in B_1$, we have $\Phi_y=I_{V(y)}$ and $h_y=h_{V(y)}$.
Let us point out that, by virtue of \cite[Lemma 4.3]{Ding-Xu-Trans}, we can conclude the
following splitting type result, whose proof is postponed to the appendix.
\begin{Prop}\label{h-vr converges}
For $y\in\R^3$, let us define the functional $\Phi_{\eps,y}: E\to\R$,
\[
\Phi_{\eps,y}(u)=\frac12\big(
\|u^+\|^2-\|u^-\|^2 \big) +\frac12 \int_{\R^3} V(\eps x+y)|u|^2dx
-\int_{\R^3} G(\eps x+y,|u|)dx ,
\]
and $(\Phi_{\eps,y}^{red},h_{\eps,y})$ the associated reduction couple, we have that
\begin{itemize}
\item[$(1)$] let $\{y_\eps\}\subset\R^3$ be such that $y_\eps\to y$ for some $y\in\R^3$
then, up to a subsequence,
$h_{\eps,y_\eps}(w)\to h_y(w)$ as $\eps\to0$ for each $w\in E^+$;

\item[$(2)$] let $\{y_\eps\}\subset\R^3$ be such that $y_\eps\to y$ for some $y\in\R^3$
and let $\{w_\eps\}\subset E^+$ be such that $w_\eps\weakto w$
for some $w\in E^+$ then, up to a subsequence,
\[
\|h_{\eps,y_\eps}(w_\eps)-h_{\eps,y_\eps}(w_\eps-w)-h_y(w)\|=o_\eps(1)
\]
as $\eps\to0$;

\item[$(3)$] let $\{y_\eps\}\subset\R^3$ be such that $y_\eps\to y$ for some $y\in\R^3$
and let $\{w_\eps\}\subset E^+$ be such that $w_\eps\weakto w$
for some $w\in E^+$ then, up to a subsequence,
\[
\Phi^{red}_{\eps,y_\eps}(w_\eps)-\Phi^{red}_{\eps,y_\eps}(w_\eps-w)-\Phi^{red}_y(w)=o_\eps(1)
\]
and
\[
D\Phi^{red}_{\eps,y_\eps}(w_\eps)[\va]-D\Phi^{red}_{\eps,y_\eps}(w_\eps-w)[\va]-D\Phi_y(w)[\va]=o_\eps(1)\|\va\|
\]
uniformly for $\va\in E^+$ as $\eps\to0$.
\end{itemize}
\end{Prop}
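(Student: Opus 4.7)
All three assertions are obtained by passing to the limit $\eps \to 0$ in the Euler-Lagrange characterization of the reduction map. Recall that $h_{\eps,y}(v) \in E^-$ is the unique element satisfying
\begin{equation*}
-\inp{h_{\eps,y}(v)}{\phi} + \Real\int_{\R^3} V(\eps x + y)(v + h_{\eps,y}(v))\cdot\bar\phi\,dx - \Real\int_{\R^3} g(\eps x + y, |v+h_{\eps,y}(v)|)(v+h_{\eps,y}(v))\cdot\bar\phi\,dx = 0
\end{equation*}
for all $\phi \in E^-$, together with the uniform bound $\|h_{\eps,y}(v)\| \le C(\|v\|)$ from Theorem~\ref{thm:red-couple}~a). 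As $\eps \to 0$ and $y_\eps \to y$ we have $V(\eps x + y_\eps) \to V(y)$ and $\chi(\eps x + y_\eps) \to \chi(y)$ pointwise, so the formal limiting equation is the one defining $h_y(v)$ through $\Phi_y$.

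For (1), fix $w \in E^+$ and extract a weak limit $h_* \in E^-$ of $h_{\eps,y_\eps}(w)$ along a subsequence. The subcritical part of $g$ passes to the limit by the locally compact embedding $E \hookrightarrow L^q_{\loc}$ for $q \in [2,3)$, the linear $V$-integral by dominated convergence, and the critical cubic piece $\ka\chi(\eps x + y_\eps)|u|u$ by the Brezis-Lieb/Egorov type estimate reproduced in the proof of Proposition~\ref{PS condition} (which crucially exploits that $\chi$ is compactly supported, so the critical nonlinearity is concentrated where we retain some compactness in the rescaled picture). Uniqueness of the reduction map identifies $h_* = h_y(w)$. Strong convergence is then obtained by testing the Euler-Lagrange equation against $\phi := h_{\eps,y_\eps}(w) - h_y(w) \in E^-$: the quadratic part of the resulting identity yields $-\|h_{\eps,y_\eps}(w) - h_y(w)\|^2 + o(1)$, while all remaining terms vanish by the same compactness arguments.

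For (2), set $u_\eps := w_\eps + h_{\eps,y_\eps}(w_\eps)$ and $\eta_\eps := h_{\eps,y_\eps}(w_\eps) - h_{\eps,y_\eps}(w_\eps - w)$, and subtract the two instances of the Euler-Lagrange equation. Since $w_\eps - w \weakto 0$, a Brezis-Lieb decomposition (subcritical part as in \cite[Lemma 7.10]{Ding2}, critical part as in Proposition~\ref{PS condition}) shows that the nonlinear difference $g(\eps x + y_\eps, |u_\eps|)u_\eps - g(\eps x + y_\eps, |u_\eps - w - \eta_\eps|)(u_\eps - w - \eta_\eps)$ differs from $g(y, |w + \eta_\eps|)(w + \eta_\eps)$ by a remainder pairing with $\phi \in E^-$ as $o_\eps(1)\|\phi\|$. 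Any weak limit $\eta_*$ of $\eta_\eps$ therefore satisfies the defining equation for $h_y(w)$, and the argument of (1) upgrades this to strong convergence $\eta_\eps \to h_y(w)$. For (3), decompose $u_\eps = [(w_\eps - w) + h_{\eps,y_\eps}(w_\eps - w)] + [w + h_y(w)] + o_\eps(1)$ in $E$ using (2), and apply the Brezis-Lieb lemma to each of the three summands of $\Phi_{\eps,y_\eps}(u_\eps)$: the quadratic part splits by orthogonality of $E = E^+ \oplus E^-$ in both inner products, the $V$-term by dominated convergence, and the $G$-term by the same identity used in Proposition~\ref{PS condition}. The derivative statement follows from the identical scheme applied to $D\Phi_{\eps,y_\eps}(u_\eps)[\phi + Dh_{\eps,y_\eps}(w_\eps)[\phi]]$, with the strong convergence of $\eta_\eps$ from (2) securing uniformity in $\|\phi\| \le 1$.

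The main obstacle is the critical cubic term $\ka\chi(\eps x + y_\eps)|u|u$, for which $E \hookrightarrow L^3$ is only continuous, not compact. This is overcome exactly as in the proof of Proposition~\ref{PS condition}: on the support of $\chi(\eps x + y_\eps)$, an Egorov-type argument removes a set of arbitrarily small measure where uniform convergence fails, and the remainder is controlled by the uniform $L^3$-bound coming from Theorem~\ref{thm:red-couple}~a) and Proposition~\ref{lpdec}.
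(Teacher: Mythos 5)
Your plan is genuinely different from the paper's: you argue at the level of the Euler--Lagrange characterization of the reduction map, whereas the paper never passes to the limit in an Euler--Lagrange equation at all. The paper instead establishes a Brezis--Lieb splitting \eqref{BL1} for the energies $\Phi_{\eps,y_\eps}$ themselves, combines it with the maximality of $h_{\eps,y_\eps}$ over $E^-$ (used twice) and with assertion $(1)$ to obtain the one-sided estimate $\Phi_{\eps,y_\eps}(w_\eps+h_{\eps,y_\eps}(w_\eps))\le\Phi_{\eps,y_\eps}(w_\eps+h_{\eps,y_\eps}(w_\eps-w)+h_y(w))+o_\eps(1)$, and then converts this into a norm bound via the second-variation identity \eqref{identity1}, which expresses the energy drop off the maximum as a coercive quadratic form in the distance to the maximizer. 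Note also that for $(1)$ the paper simply cites \cite[Lemma~4.3]{Ding-Xu-Trans} rather than reproving it.

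Two issues with your proposal. First, in $(1)$ the claim that ``all remaining terms vanish by compactness'' is misattributed: after subtracting the two Euler--Lagrange equations and testing against $\phi=h_{\eps,y_\eps}(w)-h_y(w)$, the difference $\int[g_\eps(\cdot,|w+h_{\eps,y_\eps}(w)|)(w+h_{\eps,y_\eps}(w))-g_\eps(\cdot,|w+h_y(w)|)(w+h_y(w))]\cdot\bar\phi$ is of the same order as $\|\phi\|^2$ and does \emph{not} go to zero; what saves the argument is that this term is nonnegative by the convexity of $G(x,\cdot)$ (equivalently, the monotonicity of $s\mapsto g(x,s)s$), so it can be dropped. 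Only the coefficient-difference piece $\int[g_\eps(\cdot,|u|)-g(y,|u|)]u\bar\phi$ with $u=w+h_y(w)$ \emph{fixed} is $o_\eps(1)$ by dominated convergence. Second, and more seriously, in $(2)$ your proposed Brezis--Lieb decomposition with the role of the fixed weak limit played by $w+\eta_\eps$ (where $\eta_\eps=h_{\eps,y_\eps}(w_\eps)-h_{\eps,y_\eps}(w_\eps-w)$) is not a legitimate application of the lemma, since $\eta_\eps$ is precisely the quantity whose limit behavior you are trying to establish and is not yet known to be even weakly convergent, let alone free of concentration. In particular, the Egorov-type control of the critical cubic cross term from the proof of Proposition~\ref{PS condition} relies on absolute continuity of the tail integral $\int_{B_2^\eps\setminus\Theta_\sigma}|u|^3$ for a \emph{fixed} $u$; this fails if $u=w+\eta_\eps$ may bubble. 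Relatedly, the direct sign argument that worked in $(1)$ no longer applies in $(2)$ because the four $g$-evaluations no longer pair against the same increment. The paper sidesteps both difficulties by fixing the weak limit $v$ of $h_{\eps,y_\eps}(w_\eps)$ first, performing the Brezis--Lieb splitting with that fixed limit, and extracting the strong convergence solely through the concavity identity \eqref{identity1}, never needing the Euler--Lagrange equation to converge.
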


\section{The min-max scheme}\label{sec:min-max scheme}

In this section, we will prove the existence of solutions to the truncated problem \eqref{truncated dirac}
and, by virtue of Lemma \ref{critical attained},
we will restrict ourselves in the barrier
$0\leq\ka<\bar\ka$ where $\bar\ka$ is define in \eqref{eq:def-ka}.
We would like to emphasize that such a choice of $\bar\ka$
can be interpreted as we choose $c_0=\ga(J_{\vecmu_V})$ in Proposition \ref{PS condition}.
With all these notations, for such choice of $\ka$, we can fix
the constant $\de_0>0$ properly small so that the Palais-Smale condition
holds automatically in the energy range $\Phi_\eps\leq \ga(J_{\vecmu_V})$.

To begin with, let us mention that, under our hypotheses on $V$, there always exists a vector space
$X\subset\R^3$ such that:
\begin{itemize}
\item[$(a)$] $V|_X$ has a strict local maximum at $0$;

\item[$(b)$] $V|_{X^\bot}$ has a strict local minimum at $0$.
\end{itemize}
In fact, in case $(V1)$, $X=\R^3$ if $0$ is local maximum or
$X=\{0\}$ if $0$ is local minimum, whereas,
in case $(V2)$, $X$ is the space spanned by eigenvectors
associated to negative eigenvalues of $D^2 V(0)$. Let $P_X :\R^3\to X$ be the orthogonal projection (in the case
$X=\{0\}$, $P_X$ is simply the trivial projection).

In the next, solutions of \eqref{truncated dirac} will be obtained as critical
points of $\Phi_\eps$, and a key ingredient for the construction of a min-max scheme is using the
reduction couple $(\Phi^{red}_\eps, h_\eps)$. However,
due to the lack of information on the exact behavior of the reduction map
$h_\eps: E^+\to E^-$, it seems hopeless  to make a "path of least energy
spikes" by proper scaling as was employed in \cite{DPR, JeanjeanTanaka}.

Recalling $\nu_0=V(0)$, let us focus on functions in the subspace $E^+$. Denoted by
$B_0:=B(0,R_0)$ for some $R_0<R_1$. Let us choose a minimizer $U\in\cm(I_{\nu_0})$
for $\ga(I_{\nu_0})$ and consider the path $p_\eps: B_0^\eps\to\cm^+(\Phi_\eps)$
defined as
\[
p_\eps(\xi)(x)=t_{\xi,\eps} U^+(x-\xi), \quad x\in\R^3,
\]
where $\cm^+(\Phi_\eps)=\big\{ w\in E^+\setminus\{0\}:\, D\Phi^{red}_\eps(w)[w]=0 \big\}$ and $t_{\xi,\eps}$
is the unique $t>0$ such that
\[
t_{\xi,\eps}U^+(\cdot -\xi)\in\cm^+(\Phi_\eps).
\]
%
%
We also define a family of deformations on $\cm^+(\Phi_\eps)$
\[
\Ga_\eps\equiv\big\{ \va:\cm^+(\Phi_\eps)\to \cm^+(\Phi_\eps) \text{ homeomorphism}:\,
\va(p_\eps(\xi))=p_\eps(\xi) \text{ if } \xi\in\pa B_0^\eps\cap X\big\}.
\]
Then we define the min-max level
\begin{\equ}\label{min-max level}
\ga_\eps:=\inf_{\va\in\Ga_\eps}\max_{\xi\in \ov{B_0^\eps}\cap X}\Phi^{red}_\eps(\va(p_\eps(\xi))).
\end{\equ}
We point out here that, in the case $X=\{0\}$,
$\ga_\eps=\ga(\Phi_\eps)=\inf_{\cm^+(\Phi_\eps)}\Phi^{red}_\eps$.
A technical point we would like to emphasize, which constitutes a crucial difference with min-max
quantity defined in \cite{DPR}, is the fact that the elements $p_\eps(\xi)+h_\eps(p_\eps(\xi))$
do not resemble a least energy solution of $I_\nu$ since
not much is known about the map $h_\eps: E^+\to E^-$.

\begin{Prop}\label{boundary energy estimate}
There exist $\eps_0,\,\de>0$ such that for any $\eps\in(0,\eps_0)$
\[
\Phi^{red}_\eps(p_\eps(\cdot))\big|_{\pa B_0^\eps\cap X}\leq \ga(J_{\nu_0})-\de.
\]
\end{Prop}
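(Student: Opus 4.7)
My plan is to exploit the translation covariance of the functionals $\Phi_\eps$ in order to reduce the evaluation of $\Phi^{red}_\eps(p_\eps(\xi))$ to a maximum of a reduced shifted functional, pass to the limit $\eps\to 0$ via Proposition~\ref{h-vr converges}, and then invoke the strict local maximum property of $V|_X$ at the origin to produce a uniform gap.

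First I would note that since the norm $\|\cdot\|$ and the splitting $E=E^+\oplus E^-$ are both invariant under spatial translations (they are defined through Fourier multipliers that commute with translations), one directly verifies $\Phi_\eps(u(\cdot-\xi))=\Phi_{\eps,\eps\xi}(u)$. The reduction maps are then conjugate under the same shift and, using the min-max characterization of $\cm^+$ in Remark~\ref{rem:red-couple},
\[
  \Phi^{red}_\eps(p_\eps(\xi))\;=\;\max_{t>0}\Phi^{red}_{\eps,\eps\xi}(tU^+).
\]
For $\xi\in\pa B_0^\eps$ the point $y:=\eps\xi$ lies in $\pa B_0\subset B_1$, so $\chi(y)=1$ and the limit functional $\Phi_y$ coincides on $E$ with $I_{V(y)}$.

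I would then argue by contradiction. Suppose the proposition fails, so there exist $\eps_n\to 0$ and $\xi_n\in\pa B_0^{\eps_n}\cap X$ with $\Phi^{red}_{\eps_n}(p_{\eps_n}(\xi_n))\to \ga(I_{\nu_0})$. By compactness of $\pa B_0\cap X$, after passing to a subsequence $y_n:=\eps_n\xi_n\to y^*\in\pa B_0\cap X$. Writing $\psi_n(t):=\Phi^{red}_{\eps_n,y_n}(tU^+)$, conditions $(f2)$--$(f3)$ together with the choice of $\de_0$ in \eqref{eq:def-de0} force $\psi_n(t)\to 0$ as $t\to 0^+$ and $\psi_n(t)\to -\infty$ as $t\to\infty$ uniformly in $n$, so the unique maximizer $t_n$ of $\psi_n$ stays in a fixed compact subset of $(0,\infty)$. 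Proposition~\ref{h-vr converges}(3), applied with the constant sequence $w_\eps\equiv tU^+$, gives $\psi_n(t)\to I^{red}_{V(y^*)}(tU^+)$ for each $t>0$; a routine $\cc^1$-equicontinuity estimate (using the a priori bound on $h_{\eps_n,y_n}$ from Theorem~\ref{thm:red-couple}~a)) promotes this to local uniform convergence in $t$. Therefore $\max\psi_n\to\max_{t>0}I^{red}_{V(y^*)}(tU^+)$, which gives
\[
  \ga(I_{\nu_0})\;=\;\max_{t>0}I^{red}_{V(y^*)}(tU^+).
\]

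To close, recall that $V|_X$ has a strict local maximum at $0$; choosing $R_0<R_1$ small at the outset ensures $V(y^*)<V(0)=\nu_0$ for every $y^*\in\pa B_0\cap X$. Since $I^{red}_\nu(v)=\max_{w\in E^-}I_\nu(v+w)$ by concavity of $I_\nu$ on $v+E^-$, and since $E^+\perp E^-$ in $L^2$,
\[
  I^{red}_{V(y^*)}(tU^+)\;\le\; I^{red}_{\nu_0}(tU^+)-\frac{\nu_0-V(y^*)}{2}\,t^2|U^+|_2^2\qquad\text{for all } t>0.
\]
Because $U^+\in\cm^+(I_{\nu_0})$, the map $t\mapsto I^{red}_{\nu_0}(tU^+)$ attains its maximum $\ga(I_{\nu_0})$ uniquely at $t=1$; since the maximizer on the left-hand side is strictly positive, taking max in $t$ gives $\max_{t>0}I^{red}_{V(y^*)}(tU^+)<\ga(I_{\nu_0})$ strictly, which contradicts the previous display. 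The main technical nuisance I expect is the uniform-in-$n$ control of the maximizers $t_n$ and the equicontinuity needed to promote pointwise convergence of $\psi_n$ into convergence of $\max\psi_n$; both are routine consequences of $(f3)$ and the bound on the reduction maps in Theorem~\ref{thm:red-couple}~a).
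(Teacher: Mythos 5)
Your proposal is correct and follows essentially the same route as the paper's proof: both exploit translation covariance to rewrite $\Phi^{red}_\eps(p_\eps(\xi))$ as a maximum over the ray through $U^+$ of a shifted reduced functional, both invoke Proposition~\ref{h-vr converges} to pass to the limiting functional $I^{red}_{V(y)}$, and both conclude by the strict monotonicity of the reduced energy in the potential parameter combined with the boundary condition $V(y)\le\nu_0-\si$ on $\pa B_0\cap X$. The only differences are stylistic: the paper gives a direct estimate (bounding $\Phi^{red}_\eps(tW_\xi)$ by $J^{red}_{\nu_0-\si}(tW)+o_\eps(1)$ on a fixed compact $t$-interval and leaving the uniformity in $\xi$ implicit), while you argue by contradiction, extract a convergent subsequence $y_n\to y^*\in\pa B_0\cap X$, and identify the limit exactly; your monotonicity step uses the explicit quantitative lower bound $I^{red}_{\nu_0}(tU^+)-I^{red}_{V(y^*)}(tU^+)\ge\frac{\nu_0-V(y^*)}{2}t^2|U^+|_2^2$ (via $L^2$-orthogonality of $E^\pm$), whereas the paper reads off strictness from the pointwise comparison at the positive maximizer. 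Your version handles the uniformity in $\xi$ more transparently and makes explicit the need to keep the maximizers $t_n$ away from $0$, a point the paper's display glosses over; otherwise the two arguments are the same.
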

\begin{proof}
To simplify notation, we use subscript "$\xi$" to indicate the coordinate translation
of a function $u\in E$, that is, $u_\xi(x)=u(x-\xi)$. Then, on
a fixed bounded interval $t\in[0,T_0]$ with some $T_0$ large, we have
\[
\aligned
\Phi^{red}_\eps(tW_\xi)&\leq\frac12\big( \|tW_\xi\|^2-\|h_\eps(tW_\xi)\|^2 \big)
+\frac12\int_{\R^3}V_\eps(x)| tW_\xi+h_\eps(tW_\xi)|^2dx \\
&\qquad -\int_{B_1^\eps}F(|tW_\xi+h_\eps(tW_\xi)|)dx.
\endaligned
\]
Let us first remark that there exists $\si>0$ such that
$V(\xi)\leq \nu_0-\si$ for all $\xi\in\pa B_0^\eps\cap X$.
Since $t\in[0,T_0]$ is bounded and $R_0<R_1$, by $(1)$ in Proposition \ref{h-vr converges},
$h_\eps(t W_\xi)=h_\eps(t W)_\xi\to \msj_{V(\xi)}(tW)$ uniformly
in $t$ as $\eps\to0$. Thus, we deduce
\[
\Phi^{red}_\eps(tW_\xi)\leq J^{red}_{\nu_0-\si}(tW)+o_\eps(1)
\quad \forall \xi\in\pa B_0^\eps\cap X.
\]
Finally, since $J_{\nu_0-\si}<J_{\nu_0}$ strictly on compact subsets,
we have that
\[
\aligned
\max_{t>0}J^{red}_{\nu_0-\si}(tW)&=\max_{t>0}J_{\nu_0-\si}(tW+\msj_{\nu_0-\si}(tW))\\
&<\max_{t>0}J_{\nu_0}(tW+\msj_{\nu_0-\si}(tW))\\
&\leq\max_{t>0}J^{red}_{\nu_0}(tW)=\ga(J_{\nu_0}),
\endaligned
\]
which completes the proof.
\end{proof}

\begin{Prop}\label{gavr up estimate}
We have that
\[
\limsup_{\eps\to0}\ga_\eps\leq \ga(J_{\nu_0}).
\]
\end{Prop}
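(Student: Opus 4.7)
The argument will exploit the fact that $\mathrm{id}_{\cm^+(\Phi_\eps)} \in \Ga_\eps$, which gives at once
\[
  \ga_\eps \le \max_{\xi\in\ov{B_0^\eps}\cap X}\Phi_\eps^{red}(p_\eps(\xi)),
\]
so it suffices to estimate the right-hand side as $\eps\to 0$. Pick $\xi_\eps\in\ov{B_0^\eps}\cap X$ realizing this maximum and set $y_\eps:=\eps\xi_\eps\in\ov{B_0}\cap X$. Since $\ov{B_0}\cap X$ is compact, along a subsequence $y_\eps\to x_0\in\ov{B_0}\cap X$. Choosing $R_0<R_1$ small enough from the outset, we have $\chi(x_0)=1$ and, because $V|_X$ has a strict local maximum at $0$, $V(x_0)\le V(0)=\nu_0$.

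By the translation invariance of the splitting $E=E^+\oplus E^-$ (Proposition~\ref{invariant under derivatives}) and a change of variables $y=x-\xi_\eps$, we have $\Phi_\eps(v(\cdot-\xi_\eps))=\Phi_{\eps,y_\eps}(v)$ for all $v\in E$, with $\Phi_{\eps,y_\eps}$ as in Proposition~\ref{h-vr converges}; consequently $\Phi_\eps^{red}(tU^+(\cdot-\xi_\eps))=\Phi_{\eps,y_\eps}^{red}(tU^+)$. Since $p_\eps(\xi_\eps)=t_{\xi_\eps,\eps}U^+(\cdot-\xi_\eps)$ is the unique maximizer of $t\mapsto \Phi_\eps^{red}(tU^+(\cdot-\xi_\eps))$ (Remark~\ref{rem:red-couple}),
\[
  \Phi_\eps^{red}(p_\eps(\xi_\eps))=\max_{t>0}\Phi_{\eps,y_\eps}^{red}(tU^+).
\]

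The maximizer $t_{\xi_\eps,\eps}$ stays in a fixed bounded interval $[0,T_0]$ uniformly in $\eps$. Indeed, $|y_\eps|\le R_0<R_1$ implies that $\{x:\eps x+y_\eps\in B_1\}$ exhausts $\R^3$ as $\eps\to 0$, so on any prescribed ball one recovers the full $F(s)\ge \la s^p/p+\ka s^3/3$ from $(f2)$. Combined with $|V_\eps|\le |V|_\infty<a$ and the bound on $\|h_{\eps,y_\eps}\|$ from Theorem~\ref{thm:red-couple}(a), this shows $\Phi_{\eps,y_\eps}^{red}(tU^+)\to-\infty$ as $t\to\infty$ at a rate uniform in small $\eps$. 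Passing to a further subsequence, $t_{\xi_\eps,\eps}\to t_0\in[0,T_0]$, and Proposition~\ref{h-vr converges}(3) applied to the strongly convergent sequence $t_{\xi_\eps,\eps}U^+\to t_0U^+$ yields
\[
  \Phi_{\eps,y_\eps}^{red}(t_{\xi_\eps,\eps}U^+)\to \Phi_{x_0}^{red}(t_0U^+)=I_{V(x_0)}^{red}(t_0U^+),
\]
the last equality since $\chi(x_0)=1$ makes $\Phi_{x_0}=I_{V(x_0)}$. For each fixed $u\in E$, $\nu\mapsto I_\nu(u)$ is nondecreasing, hence so is $\nu\mapsto I_\nu^{red}(v)=\max_{w\in E^-}I_\nu(v+w)$; combined with $V(x_0)\le\nu_0$ and the fact that $U\in\cm(I_{\nu_0})$ is a minimizer (so $U^+$ maximizes $t\mapsto I_{\nu_0}^{red}(tU^+)$ and $I_{\nu_0}^{red}(U^+)=\ga(I_{\nu_0})$),
\[
  I_{V(x_0)}^{red}(t_0U^+)\le \max_{t>0}I_{V(x_0)}^{red}(tU^+)\le \max_{t>0}I_{\nu_0}^{red}(tU^+)=\ga(I_{\nu_0}).
\]
Gathering the estimates gives $\limsup_{\eps\to 0}\ga_\eps\le\ga(I_{\nu_0})\le\ga(J_{\nu_0})$, the last inequality following from $(f2)$--$(f3)$ by the comparison already used in Lemma~\ref{critical attained}.

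The main technical point is the uniform boundedness of $t_{\xi_\eps,\eps}$: the truncated nonlinearity is merely quadratic outside $B_1$ and the reduction map $h_{\eps,y_\eps}$ depends genuinely on $\eps$, so the necessary superquadratic decay has to be extracted from the observation that after translation by $\xi_\eps$ the untruncated region $\{\chi_\eps(\cdot+\xi_\eps)=1\}$ covers any fixed ball around the origin — precisely where the bulk of $U^+$ lives. Once this localization is in hand, the remainder is a straightforward application of Proposition~\ref{h-vr converges} together with monotonicity of $\nu\mapsto I_\nu^{red}$.
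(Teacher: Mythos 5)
Your argument follows the same route as the paper's: take the maximizing $\xi_\eps$, extract convergent subsequences $\eps\xi_\eps\to x_0$ and $t_{\xi_\eps,\eps}\to t_0$, pass to the limit via Proposition~\ref{h-vr converges}, and use $V(x_0)\le\nu_0$ together with monotonicity of the limit level in $\nu$. You are slightly more explicit than the paper about the translation identity $\Phi_\eps^{red}(tU^+(\cdot-\xi_\eps))=\Phi_{\eps,\eps\xi_\eps}^{red}(tU^+)$ and about justifying the boundedness of $t_{\xi_\eps,\eps}$ (which the paper merely asserts), and the closing inequality $\ga(I_{\nu_0})\le\ga(J_{\nu_0})$ is superfluous since in Section~\ref{sec:min-max scheme} the symbol $J_{\nu_0}$ denotes the functional $I_{\nu_0}$ of \eqref{notations}.
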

\begin{proof}
It suffices to show that
\begin{\equ}\label{XX}
\limsup_{\eps\to0}\max_{\xi\in \ov{B_0^\eps}\cap X}\Phi^{red}_\eps(p_\eps(\xi))\leq \ga(J_{\nu_0}).
\end{\equ}
In the following we take a sequence $\eps=\eps_n\to0$, but we drop the sub-index $n$ for the sake of clarity. For every $\eps$, there exists a maximum point $\xi_\eps\in B_0^\eps\cap X$ such that
\[
\max_{\xi_\eps\in \ov{B_0^\eps}\cap X}\Phi^{red}_\eps(p_\eps(\xi))=\Phi^{red}_\eps(p_\eps(\xi_\eps)).
\]
And we see that
\[
\aligned
\Phi^{red}_\eps(p_\eps(\xi_\eps))
&\leq\frac12\big( \|t_\eps W_{\xi_\eps}\|^2 -\|h_\eps(t_\eps W_{\xi_\eps})\|^2\big)
+\frac12\int_{\R^3}V_\eps(x)| t_\eps W_{\xi_\eps}+h_\eps(t_\eps W_{\xi_\eps})|^2dx\\
&\qquad -\int_{B_1^\eps}F(|t_\eps W_{\xi_\eps}+h_\eps(t_\eps W_{\xi_\eps})|)dx,
\endaligned
\]
where $t_\eps=t_{\xi_\eps,\eps}$. Since we have $\{t_\eps\}$ is bounded (up to a subsequence), we can assume that $t_\eps\to t_0$ and
$\eps\xi_\eps\to\xi_0\in \ov{B_0}\cap X$. Then we can conclude
that
\[
\aligned
\Phi^{red}_\eps(p_\eps(\xi_\eps))&\leq \frac12\big( \|t_0 W\|^2 -\|\msj_{V(\xi_0)}(t_0 W)\|^2\big)
+\frac{V(\xi_0)}2\int_{\R^3}| t_0 W+\msj_{V(\xi_0)}(t_0 W)|^2dx\\
&\qquad -\int_{\R^3}F(|t_0 W+\msj_{V(\xi_0)}(t_0 W)|)dx+o_\eps(1)\\
&= J_{V(\xi_0)}^{red}(t_0W)+o_\eps(1).
\endaligned
\]
Notice that $V(\xi_0)\leq\nu_0$, then
\[
J_{V(\xi_0)}^{red}(t_0W)\leq \max_{t>0}J_{\nu_0}^{red}(tW)=\ga(J_{\nu_0}),
\]
and hence \eqref{XX} holds.
\end{proof}

In the next, we will show that $\ga_\eps$ is a critical value of $\Phi_\eps$.
Motivated by \cite{DPR, Del-Pino3}, we are going to give an estimate from
below on $\ga_\eps$ and show that $\ga_\eps\geq \ga(J_{\nu_0})+o_\eps(1)$. And
in order to do so, we need to compare $\ga_\eps$ with another auxiliary
minimization value.
Firstly, set $B_3=B(0,3R_1)$ the open ball of radius $3R_1$ and
$\zeta:\R^3\to\R^3$ be a cut-off function
\begin{\equ}\label{zeta}
\zeta(x)=\left\{
\aligned
&x \quad &\text{if } |x|<3R_1,\\
&{3R_1 x}/{|x|} \quad &\text{if } |x|\ge3R_1,
\endaligned\right.
\end{\equ}
and let $Q_\eps:\R^3\to X$ be defined as
$Q_\eps(x)=P_X(\zeta(\eps x))$.
Then, let us define the barycenter type functional $\cb_\eps:E\setminus\{0\}\to\R$,
\[
\cb_\eps(u)=\frac{\int_{\R^3}Q_\eps(x)|u|^\theta dx}{\int_{\R^3}|u|^\theta dx},
\quad \text{for } u\in E\setminus\{0\}
\]
where $\theta\in(2,3)$ is the constant in $(f2)$.
Recall that $(\Phi^{red}_\eps, h_\eps)$ is the reduction couple for $\Phi_\eps$ and
$\cm^+(\Phi_\eps)=\big\{w\in E^+\setminus\{0\}:\, D\Phi^{red}_\eps(w)[w]=0\big\}$, let us
consider the following subset of functions in $\cm^+(\Phi_\eps)$:
\[
\widetilde{\cm^+}(\Phi_\eps)=\big\{ w\in \cm^+(\Phi_\eps):\, \cb_\eps(w)=0 \big\}.
\]
We also define the corresponding auxiliary minimization
\begin{\equ}\label{b-eps}
b_\eps\equiv \inf_{w\in\widetilde{\cm^+}(\Phi_\eps)}\Phi^{red}_\eps(w).
\end{\equ}
When $X$ is trivial, i.e. $X=\{0\}$, we have
$\widetilde{\cm^+}(\Phi_\eps)=\cm^+(\Phi_\eps)$ and then $b_\eps=\ga_\eps$.

The next lemma shows that $b_\eps$ is well-defined in general.
\begin{Lem}\label{bvr is well defined}
There exists $\eps_0,\, \varrho>0$ such that for $\eps\in(0,\eps_0)$,
\[
\ga_\eps\geq b_\eps\geq\varrho.
\]
\end{Lem}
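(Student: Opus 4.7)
The lemma has two parts, $\ga_\eps \geq b_\eps$ and $b_\eps \geq \varrho$, and I would treat them separately. For the lower bound on $b_\eps$, my plan is a uniform Nehari-type estimate. The key inputs are: (i) $\Phi^{red}_\eps(v) \geq \Phi_\eps(v)$ for every $v \in E^+$, which follows from $h_\eps(v)$ being the global maximiser of the concave map $\phi \mapsto \Phi_\eps(v+\phi)$ on $E^-$ (concavity via $|V|_\infty < a$ and condition (iii) of Theorem \ref{thm:red-couple}); (ii) Remark \ref{rem:red-couple}, which gives $\Phi^{red}_\eps(w) \geq \Phi^{red}_\eps(t_0 w/\|w\|)$ for every $t_0 > 0$; and (iii) the pointwise estimate $G_\eps(x,s) \leq \de\, s^2 + C_\de\, s^3$, valid because $f(0) = 0$ and $f$ is asymptotically linear, with an analogous bound for $\tilde F$. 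Combining these with $|V|_\infty < a$ and the Sobolev embedding $E \hookrightarrow L^3$ yields
\[
\Phi_\eps(v) \geq \tfrac{a - |V|_\infty}{2a}\|v\|^2 - \de'\|v\|^2 - C_\de'\|v\|^3 \qquad \text{on } E^+,
\]
so fixing $\de$ small and taking $\|v\| = t_0$ small (both independent of $\eps$) gives $\Phi^{red}_\eps(w) \geq c\, t_0^2 =: \varrho > 0$.

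For $\ga_\eps \geq b_\eps$ I would use Brouwer degree in the subspace $X$. If $X = \{0\}$ then $\widetilde{\cm^+}(\Phi_\eps) = \cm^+(\Phi_\eps)$ and the inequality is trivial, so assume $\dim X \geq 1$. Given $\va \in \Ga_\eps$, the idea is to produce a zero of the map
\[
T_\eps : \overline{B_0 \cap X} \to X, \qquad T_\eps(\eta) := \cb_\eps\bigl(\va(p_\eps(\eta/\eps))\bigr),
\]
since such a zero $\eta^*$ gives $\xi^* := \eta^*/\eps \in B_0^\eps \cap X$ with $\va(p_\eps(\xi^*)) \in \widetilde{\cm^+}(\Phi_\eps)$, whence $\max_{\xi \in \overline{B_0^\eps} \cap X} \Phi^{red}_\eps(\va(p_\eps(\xi))) \geq b_\eps$, and taking infimum over $\va$ finishes. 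Because the elements of $\Ga_\eps$ fix $p_\eps(\pa B_0^\eps \cap X)$ pointwise, on $\pa B_0 \cap X$ the map reduces to $T_\eps(\eta) = \cb_\eps(p_\eps(\eta/\eps))$, and the change of variables $y = x - \eta/\eps$ (the factors $t_{\eta/\eps,\eps}$ cancel in the ratio) produces
\[
T_\eps(\eta) = \frac{\int_{\R^3} Q_\eps(\eta/\eps + y)\,|U^+(y)|^\theta\,dy}{\int_{\R^3}|U^+(y)|^\theta\,dy}.
\]

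The key technical step is the boundary asymptotics of $T_\eps$. For $|y|$ bounded and $\eps$ small, $|\eta + \eps y| < 3R_1$, so $\zeta$ acts as the identity on $\eps(\eta/\eps + y)$ and $Q_\eps(\eta/\eps + y) = \eta + \eps P_X(y)$ using $\eta \in X$. Since $U$ is a least energy solution of the limit problem it decays exponentially, so the tail $|y|>M$ contributes $o(\eps)$ for $M = M(\eps)\to\infty$ chosen suitably. This produces the uniform expansion
\[
T_\eps(\eta) = \eta + \eps\, c_U + o(\eps) \qquad \text{on } \pa B_0 \cap X,
\]
with $c_U := \int P_X(y)|U^+(y)|^\theta dy / \int |U^+(y)|^\theta dy \in X$ a fixed vector. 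For $\eps$ small the perturbation is $O(\eps) \ll R_0 = |\eta|$, so the linear homotopy $H_s(\eta) = (1-s)\eta + s T_\eps(\eta)$ stays away from $0$ on $\pa B_0 \cap X$, giving $\deg(T_\eps, B_0 \cap X, 0) = \deg(\mathrm{id}, B_0 \cap X, 0) = 1$ and hence the required zero. The main obstacle is precisely this boundary expansion: it hinges on the exponential decay of the least energy solution $U$ together with the geometric condition $R_0 < R_1 < 3R_1$ that makes $\zeta$ trivial on the region where $|U^+|^\theta$ concentrates. The rest of the degree argument is routine.
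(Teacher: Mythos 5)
Your degree argument for $\ga_\eps\geq b_\eps$ follows essentially the same route as the paper: both define $\psi_\eps(\xi)=\cb_\eps\big(\va(p_\eps(\xi/\eps))\big)$ on $\ov{B_0}\cap X$, show it is close to the identity on the boundary, and invoke Brouwer degree with the linear homotopy. One difference worth flagging: the paper only claims $\psi_\eps(\xi)=\xi+o_\eps(1)$ uniformly on $\pa B_0\cap X$, which is all that is needed for the homotopy to avoid $0$, whereas you sharpen this to a first-order expansion $\xi+\eps c_U+o(\eps)$ and justify the tail estimate by exponential decay of $U$. This is an unnecessary overclaim, and it also has a subtle gap: the tail integral involves $|U^+|^\theta$, and since $P^+$ is a nonlocal Fourier multiplier, exponential decay of $U$ does not immediately transfer to $U^+$. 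The weaker $o_\eps(1)$ bound only requires $|U^+|^\theta\in L^1$, which holds since $U^+\in E\hookrightarrow L^\theta$: fix $M$ so large that $\int_{|y|>M}|U^+|^\theta$ is a small fraction of $\int|U^+|^\theta$, note that $Q_\eps$ is globally bounded by $3R_1$, and observe that for $|y|\leq M$ and $\eps$ small one has $|\xi+\eps y|<3R_1$ so $Q_\eps(\xi/\eps+y)-\xi=\eps P_X(y)=O(\eps M)$. So keep the conclusion but replace the $O(\eps)$ expansion by this two-scale estimate. One should also record, as the paper does, that $\va(p_\eps(\xi/\eps))\neq0$ (because $\va$ maps $\cm^+(\Phi_\eps)$ into itself), so $\cb_\eps$ is actually well defined along the image. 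For the lower bound $b_\eps\geq\varrho$, the paper simply asserts it "follows directly from $(f1)$--$(f3)$"; your Nehari-type filling-in — $\Phi^{red}_\eps\geq\Phi_\eps$ on $E^+$ since $h_\eps(v)$ maximises the concave fibre map, restrict to a uniform small radius $t_0$ via Remark~\ref{rem:red-couple}, and use the near-origin smallness of $G_\eps$ together with $|V|_\infty<a$ and the embedding $E\hookrightarrow L^3$ — is correct and has $\eps$-independent constants, so it is a legitimate way to supply the omitted detail.
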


Technically, the crucial difference with the barycenter quantity
defined in \cite{DPR, Del-Pino3} is that the integrations in
$\cb_\eps$ are taken over the whole space $\R^3$.
The reason is twofold: firstly, the orthogonal projections associated to the decomposition $E=E^+\op E^-$
are of convolution type with some tempered distributions $\rho^\pm$ (see an abstract result in
\cite{Grafakos} for operators that commutes with translations), and thus, making the choice of
compact-supported functions in $E^\pm$ by simply multiplying smooth cut-off functions
would be in our situation hopeless since the convolution with
$\rho^\pm$ do not commute with the multiplication in general.
Secondly, the barycenter of an element $w\in E^+$ does not exhibit the location of the mass
of those $u\in E$ with $u^+=w$. Therefore, it is not enough if we only consider the barycenter
integrations over a bounded domain as was introduced in \cite{DPR, Del-Pino3}.



\begin{proof}[Proof of Lemma \ref{bvr is well defined}]
Since $b_\eps\geq\varrho$ follows directly from $(f1)-(f3)$ for some $\varrho>0$,
we only need to prove that $\ga_\eps\geq b_\eps$ for all small $\eps$.

Motivated by \cite{DPR}, let us take an arbitrary $\va\in\Ga_\eps$.
We define $\psi_\eps: \ov{B_0}\cap X\to X$
as
\[
\psi_\eps(\xi)=\cb_\eps\big(\va(p_\eps(\xi/\eps))\big).
\]
We point out here that, by the definition of $\Ga_\eps$, $\va(p_\eps(\xi/\eps))\neq0$
for all $\xi\in\ov{B_0}\cap X$, and so $\psi_\eps$ is well defined.

For $\xi\in\pa B_0\cap X$, it can be seen from the definition of $\cb_\eps$ that
\[
\psi_\eps(\xi)=\xi+o_\eps(1)  \text{ uniformly in } \xi\in\pa B_0\cap X, \text{ as } \eps\to0.
\]
Therefore we can choose $\eps_0$ small enough (independent of $\va$) so that,
for all $\eps\in(0,\eps_0)$,
\[
\text{deg}(\psi_\eps,\, B_0\cap X,\,0)=\text{deg}(id,\, B_0\cap X,\,0)=1.
\]
Then we can conclude that for every $\eps$, there exists $\xi_\eps\in B_0\cap X$
such that $\psi_\eps(\xi_\eps)=0$.

Therefore, since $\xi_\eps/\eps\in \ov{B_0^\eps}\cap X$, there follows
\[
\max_{\xi\in \ov{B_0^\eps}\cap X}\Phi^{red}_\eps(\va(p_\eps(\xi)))\geq \Phi^{red}_\eps(\va(p_\eps(\xi_\eps/\eps)))\geq b_\eps,
\]
which concludes the proof.

\end{proof}

\begin{Prop}\label{key prop}
We have that
\[
\liminf_{\eps\to0}b_\eps\geq\ga(J_{\nu_0}).
\]
\end{Prop}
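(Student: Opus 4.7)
The plan is to argue by contradiction. Suppose there is a subsequence $\eps_n\to 0$ with $b_{\eps_n}\to c_\infty<\ga(J_{\nu_0})$, and pick almost-minimizers $w_n\in\widetilde{\cm^+}(\Phi_{\eps_n})$ with $\Phi_{\eps_n}^{red}(w_n)\to c_\infty$; set $u_n:=w_n+h_{\eps_n}(w_n)\in\cm(\Phi_{\eps_n})$. Because $u_n$ lies on $\cm(\Phi_{\eps_n})$, the calculation of Lemma~\ref{boundedness} gives a uniform bound on $\|u_n\|$, while the lower bound $b_{\eps_n}\ge\varrho>0$ from Lemma~\ref{bvr is well defined} rules out vanishing in the sense of Lions. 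Hence, after translating by some $y_n^1\in\R^3$ and extracting a subsequence, $w_n(\cdot+y_n^1)\weakto w^1\neq 0$ in $E^+$.

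Next I would iterate a profile decomposition adapted to the strongly indefinite structure, using Proposition~\ref{h-vr converges}. After extracting further translates $y_n^2,\dots,y_n^N$ with $|y_n^j-y_n^k|\to\infty$ for $j\neq k$ and nontrivial weak limits $w^j$, the remainder becomes $L^q$-vanishing for $q\in(2,3)$. For each $j$ a subsequence of $\eps_n y_n^j$ converges to some $y^j\in\R^3\cup\{\infty\}$. Item~(2) of Proposition~\ref{h-vr converges} propagates the decomposition to $h_{\eps_n}(w_n)$ and item~(3) delivers both an energy splitting
\[
\Phi_{\eps_n}^{red}(w_n)=\sum_{j=1}^{N}\cf_{y^j}^{red}(w^j)+o_n(1)
\]
and a matching derivative-splitting, where $\cf_{y^j}:=\Phi_{y^j}$ when $y^j$ is finite and $\cf_{y^j}$ denotes the appropriate limit functional, built with the truncated nonlinearity $\tilde f$ and the cluster value of $V(\eps_n y_n^j)$, when $y^j=\infty$. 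Thus each $w^j$ is a nontrivial critical point of $\cf_{y^j}^{red}$ and so $\cf_{y^j}^{red}(w^j)\ge\ga(\cf_{y^j})$; only finitely many bubbles can arise since each one contributes at least the uniform level $\varrho$ from Lemma~\ref{bvr is well defined}.

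The crucial use of the constraint $\cb_{\eps_n}(w_n)=0$ is to locate the bubbles. A Brezis--Lieb-type splitting of the numerator and denominator of $\cb_\eps$, combined with locally-uniform convergence $Q_{\eps_n}(x+y_n^j)\to P_X(\zeta(y^j))$ on compact sets, yields the weighted centroid identity
\[
\sum_{j=1}^{N}|u^j|_\theta^\theta\cdot P_X(\zeta(y^j))=0,
\]
where $P_X(\zeta(y^j))$ is interpreted as a cluster point in $P_X(\overline{B_3})$ when $y^j=\infty$. Combined with hypothesis $(V1)$ or $(V2)$, which makes $V|_{X^\perp}$ a strict local minimum at $0$, this forces any finite bubble close to $0$ to lie in $X^\perp$ with $V(y^j)\ge\nu_0$; by the monotonicity of $\ga(I_\nu)$ proved in Lemma~\ref{critical attained} one then gets $\ga(\cf_{y^j})=\ga(I_{V(y^j)})\ge\ga(I_{\nu_0})$. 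Bubbles outside $B_1$ see only the truncated nonlinearity $\tilde f$, whose smallness (through the choice of $\de_0$ in \eqref{eq:def-de0}) drives the corresponding ground-state level above $\ga(J_{\nu_0})$. Summing and using the centroid identity gives $c_\infty\ge\ga(J_{\nu_0})$, contradicting the hypothesis and closing the argument.

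The main obstacle I anticipate is handling multi-bubble configurations and bubbles landing on $\partial B_1$, where the cutoff $\chi$ only partially activates. Pinning down $\ga(\cf_{y^j})$ in those transition regimes requires combining Lemma~\ref{key lemma}, the monotonicity from Lemma~\ref{critical attained}, and in the critical case $\ka>0$ the Sobolev-type bound \eqref{eq:Jmu0-4} together with the quantitative constraint $\ka<\bar\ka$ built into \eqref{eq:def-ka}. Ruling out two or more bubbles amounts to showing $2\inf_y \ga(\cf_y)>\ga(J_{\nu_0})$, which I would obtain by combining the strict inequalities provided by the $V$-structure near $0$ with the $\de_0$-cutoff estimates; this is precisely the delicate analysis the paper defers to Section~\ref{sec:Proof of key prop}.
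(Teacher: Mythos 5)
Your plan follows the paper's general outline (constrained minimization via the barycenter, concentration--compactness, energy splitting via Proposition~\ref{h-vr converges}), but there is a genuine gap at its core: you never pass from an \emph{almost-minimizing} sequence on $\widetilde{\cm^+}(\Phi_\eps)$ to a \emph{constrained near-critical} sequence, and so you have no Lagrange multiplier. The paper's Lemma~\ref{STEP1} invokes Ekeland's variational principle to produce, for each fixed $\eps$, a genuine constrained minimizer $u_\eps$ solving \eqref{constraint equation} with a multiplier $\la_\eps\in X$; the nontrivial Lemma following it bounds $\la_\eps$ via a Pohozaev-type argument using Proposition~\ref{invariant under derivatives}. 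Without this step, the profiles $w^j$ extracted from a mere minimizing sequence are only weak limits: nothing forces them to satisfy any limit equation, and in particular the inequality $\cf_{y^j}^{red}(w^j)\geq\ga(\cf_{y^j})$ that your argument relies on is unjustified. Even after adding Ekeland, the profiles solve a \emph{perturbed} equation with the multiplier term $\big(\bar\la\cdot y_j\,|u_j^+|^{\theta-2}u_j^+\big)^+$, not the clean limit equation, and this is precisely why the paper must split into Case 1 and Case 2 according to the sign of $\bar\la\cdot y_j$ and invoke the strict monotonicity of Lemma~\ref{key lemma} in the bad sign case.

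The centroid identity you propose cannot substitute for this machinery. For a single bubble it does force $y^1\in X^\perp$ (the constraint $\cb_\eps=0$ does pass to the limit if the remainder vanishes in $L^\theta$), but with two or more bubbles the identity only constrains the weighted average of $P_X(\zeta(y^j))$, and without near-criticality of $D\Phi^{red}_\eps$ you cannot assert that each individual $w^j$ lies on the Nehari manifold of its limit functional — only the \emph{sum} of the pairings $D\cf^{red}_{y^j}(w^j)[w^j]$ is controlled. Hence your individual lower bounds $\cf^{red}_{y^j}(w^j)\geq\ga(\cf_{y^j})$ fail in the multi-bubble case, and the energetic argument ``$2\inf_y\ga(\cf_y)>\ga(J_{\nu_0})$'' is left unsupported. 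A secondary imprecision: bubbles landing in $B_2\setminus B_1$ see a $\chi$-weighted interpolation of $f$ and $\tilde f$, not $\tilde f$ alone, so the ``smallness of $\tilde f$'' mechanism you invoke does not directly apply there; the paper rules out escape to infinity (Claim inside Proposition~\ref{u-vr converges}) using the Lagrange multiplier term, confining bubbles to $\overline{B_2}$ where a continuity/shrink-$R_1$ argument takes over. In short, the missing Lagrange-multiplier analysis is not a detail to be filled in but the structural backbone of the proof.
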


The proof of this proposition contains the main difficulties of the paper. It will be presented in the next section.
Assuming the conclusion for the moment, jointly with
Proposition \ref{gavr up estimate}, we can obtain the following

\begin{Prop}\label{gavr limit}
We have that
\[
  \lim_{\eps\to0}\ga_\eps=\ga(J_{\nu_0}).
\]
\end{Prop}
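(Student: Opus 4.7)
The proposition is essentially a sandwich argument combining the two one-sided estimates already available in the paper, so the plan is brief and largely bookkeeping.

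First I would establish the upper bound. This is already the content of Proposition~\ref{gavr up estimate}, which gives $\limsup_{\eps\to0}\ga_\eps\le \ga(J_{\nu_0})$. No further work is needed here.

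Next I would establish the matching lower bound by chaining the two results that surround $b_\eps$. Lemma~\ref{bvr is well defined} provides $\ga_\eps\ge b_\eps$ for all sufficiently small $\eps>0$, and Proposition~\ref{key prop} states $\liminf_{\eps\to0}b_\eps\ge \ga(J_{\nu_0})$. Passing to $\liminf$ in the inequality $\ga_\eps\ge b_\eps$ therefore yields
\[
\liminf_{\eps\to0}\ga_\eps\ \ge\ \liminf_{\eps\to0}b_\eps\ \ge\ \ga(J_{\nu_0}).
\]

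Combining the two inequalities gives $\ga(J_{\nu_0})\le\liminf_{\eps\to 0}\ga_\eps\le\limsup_{\eps\to 0}\ga_\eps\le\ga(J_{\nu_0})$, so the limit exists and equals $\ga(J_{\nu_0})$. There is no genuine obstacle at this stage, since all the analytic work is packaged into Propositions~\ref{gavr up estimate} and \ref{key prop}; the role of this proposition is simply to record the consequence that will feed into the subsequent critical point argument for $\Phi_\eps$. The real difficulty lies in Proposition~\ref{key prop}, whose proof is deferred to Section~\ref{sec:Proof of key prop}.
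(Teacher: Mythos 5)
Your argument is correct and is precisely the route the paper takes: the upper bound is Proposition~\ref{gavr up estimate}, and the lower bound follows by chaining $\ga_\eps\ge b_\eps$ from Lemma~\ref{bvr is well defined} with $\liminf_{\eps\to0}b_\eps\ge\ga(J_{\nu_0})$ from Proposition~\ref{key prop}. Nothing to add.
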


From Proposition \ref{boundary energy estimate} and \ref{gavr limit},
we can get $\ga_\eps>\Phi^{red}_\eps(p_\eps(\cdot))
\big|_{\pa B_0^\eps\cap X}$ for
all small $\eps>0$. Recall that we have restricted $\ka\in[0,\bar\ka)$,
it follows that $\ka^2\cdot\ga(J_{\nu_0})<
\Big(\frac{a^2-|V|_\infty^2}{a^2}\Big)^{\frac32}\frac{S^{\frac32}}6$
which guarantees the compactness. Thus, by
Proposition \ref{PS condition}, we easily obtain

\begin{Thm}\label{existence part}
There exists $\eps_0>0$ such that for $\eps\in(0,\eps_0)$ there exists a solution $z_\eps$ of the problem \eqref{truncated dirac}. Moreover,
$\Phi^{red}_\eps(z_\eps^+)=\Phi_\eps(z_\eps)=\ga_\eps$.
\end{Thm}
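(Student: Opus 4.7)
The plan is to realize $\ga_\eps$ as a critical value of $\Phi_\eps^{red}$ by a standard minimax/deformation argument on the reduced manifold, and then to lift the resulting critical point to a genuine critical point of $\Phi_\eps$ via the reduction couple $(\Phi_\eps^{red}, h_\eps)$. The solution so obtained is automatically a weak solution of \eqref{truncated dirac}.

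First I would combine Proposition~\ref{boundary energy estimate} with Proposition~\ref{gavr limit} to conclude that, for all $\eps$ small enough, the strict linking inequality
\[
  \ga_\eps \;>\; \max_{\xi\in\pa B_0^\eps\cap X}\Phi_\eps^{red}(p_\eps(\xi))
\]
holds. This is the crucial ingredient that keeps the class $\Ga_\eps$ stable under downward deformations: any flow that does not increase $\Phi_\eps^{red}$ can be cut off near the boundary so as to be the identity on $\{p_\eps(\xi):\xi\in\pa B_0^\eps\cap X\}$, and hence maps $\Ga_\eps$ into itself. Next I would verify that compactness is available at level $\ga_\eps$. Since $\ka\in[0,\bar\ka)$ with $\bar\ka$ defined by \eqref{eq:def-ka}, and since $V(0)\le\sup V$ gives $\ga(I_{\nu_0})\le \ga(J_{\vecmu_V})$ by comparison, one has $\ka^2\,\ga(J_{\nu_0})<\big((a^2-|V|_\infty^2)/a^2\big)^{3/2} S^{3/2}/6$. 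Proposition~\ref{gavr limit} then guarantees the same strict inequality with $\ga(J_{\nu_0})$ replaced by $\ga_\eps$ for $\eps$ small, which is exactly the hypothesis of Proposition~\ref{PS condition}. Consequently $\Phi_\eps$ satisfies $(PS)_c$ for every $c\le\ga_\eps$.

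Now I would run a standard deformation argument on the $\cc^1$-manifold $\cm^+(\Phi_\eps)$, which by Theorem~\ref{thm:red-couple}~b) is diffeomorphic to an open subset of $SE^+$. Suppose for contradiction that $\ga_\eps$ is a regular value of $\Phi_\eps^{red}$; then the pseudo-gradient flow associated to $\Phi_\eps^{red}$ yields, by the quantitative deformation lemma, a homeomorphism $\eta\in\Ga_\eps$ that pushes the sublevel set $\{\Phi_\eps^{red}\le\ga_\eps+\delta\}$ into $\{\Phi_\eps^{red}\le\ga_\eps-\delta\}$ for some $\delta>0$, while remaining the identity on a neighborhood of $\{p_\eps(\xi):\xi\in\pa B_0^\eps\cap X\}$ thanks to the strict linking inequality above. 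Composing any near-optimal $\va\in\Ga_\eps$ with $\eta$ contradicts the infimum definition of $\ga_\eps$. This produces a $(PS)_{\ga_\eps}$ sequence $(v_n)_n\subset\cm^+(\Phi_\eps)$ for $\Phi_\eps^{red}$.

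To conclude, Theorem~\ref{thm:red-couple}~c) asserts that $u_n:=v_n+h_\eps(v_n)$ is a $(PS)_{\ga_\eps}$ sequence for $\Phi_\eps$ itself. Applying Proposition~\ref{PS condition} gives, after extracting, a strong limit $z_\eps\in E$ with $D\Phi_\eps(z_\eps)=0$ and $\Phi_\eps(z_\eps)=\ga_\eps$. By the characterization in Remark~\ref{rem:red-couple}, $z_\eps^-=h_\eps(z_\eps^+)$ and $D\Phi_\eps^{red}(z_\eps^+)=0$, hence $\Phi_\eps^{red}(z_\eps^+)=\Phi_\eps(z_\eps)=\ga_\eps$ and $z_\eps$ solves \eqref{truncated dirac}, as required.

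The main obstacle I anticipate is making the deformation argument rigorous on the possibly only open submanifold $\cm^+(\Phi_\eps)$ of $SE^+$, while simultaneously respecting the boundary-fixing condition that defines $\Ga_\eps$. This requires showing that the family $\{p_\eps(\xi):\xi\in\pa B_0^\eps\cap X\}$ remains uniformly bounded away from the zero section of $\cm^+(\Phi_\eps)$ and carries strictly lower energy than $\ga_\eps$; both points follow, after compactifying via translation invariance, from uniform bounds on the scaling parameters $t_{\xi,\eps}$ together with the convergence statements of Proposition~\ref{h-vr converges}. Everything else is a routine combination of the minimax principle, the $(PS)$ condition, and the reduction formalism.
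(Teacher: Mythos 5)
Your argument is correct and follows the same route the paper takes: combine the strict linking inequality from Propositions~\ref{boundary energy estimate} and \ref{gavr limit} with the $(PS)$ condition guaranteed by $\ka\in[0,\bar\ka)$ (via the comparison $\ga(J_{\nu_0})\le\ga(J_{\vecmu_V})$ and Proposition~\ref{PS condition}), then run a standard deformation/minimax argument on $\cm^+(\Phi_\eps)$ and lift the critical point of $\Phi_\eps^{red}$ to one of $\Phi_\eps$ through the reduction couple. The paper states this very tersely (``Thus, by Proposition~\ref{PS condition}, we easily obtain\dots''); you have simply filled in the routine deformation and reduction-couple bookkeeping, which matches the intended proof.
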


\section{Proof of Proposition \ref{key prop}}\label{sec:Proof of key prop}
The proof will be divided into several parts. As a first step, we prove the existence of a minimizer $u_\eps$ to be auxiliary problem \eqref{b-eps}.

\begin{Lem}\label{STEP1}
There exists $\eps_0>0$  such that for any $\eps\in(0,\eps_0)$, there exist
$u_\eps\in E\setminus\{0\}$ with $\cb_\eps(u_\eps^+)=0$ and $\la_\eps\in X$
such that
\begin{\equ}\label{constraint equation}
-i\al\cdot\nabla u_\eps + a\be u_\eps + V_\eps(x)u_\eps = g_\eps(x,|u_\eps|)u_\eps +
\big(\la_\eps\cdot Q_\eps(x) |u_\eps^+|^{\theta-2}u_\eps^+\big)^+
\end{\equ}
and
\[
\Phi_\eps(u_\eps)=b_\eps.
\]
Moreover, the sequence $\{u_\eps\}$ is bounded in $E$.
\end{Lem}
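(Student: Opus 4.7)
The plan is to combine a constrained minimization argument on $\widetilde{\cm^+}(\Phi_\eps)$ with Ekeland's variational principle and a Lagrange-multiplier analysis, and then extract a strongly convergent subsequence via Proposition \ref{PS condition}.

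First I would verify that $\widetilde{\cm^+}(\Phi_\eps) = \Psi_\eps^{-1}(0) \cap \cb_\eps^{-1}(0)$ is a genuine $\cc^1$-submanifold of $E^+\setminus\{0\}$, where $\Psi_\eps(v) := D\Phi_\eps^{red}(v)[v]$ encodes the Nehari condition. Lemma \ref{key lemma} together with $\va_v'(1)=0$ on $\cm^+(\Phi_\eps)$ gives $D\Psi_\eps(v)[v] = \va_v''(1) < 0$, while the degree-zero homogeneity of $\cb_\eps$ in its argument forces $D\cb_\eps(v)[v] = 0$; hence $D\Psi_\eps(v)$ and $D\cb_\eps(v)$ are linearly independent along the constraint. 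Applying Ekeland's principle to $\Phi_\eps^{red}$ restricted to this submanifold produces a sequence $(v_n)$ with $\Phi_\eps^{red}(v_n) \to b_\eps$ and Lagrange multipliers $\mu_n \in \R$, $\la_n \in X$ satisfying
\[
D\Phi_\eps^{red}(v_n) - \mu_n D\Psi_\eps(v_n) - \la_n \cdot D\cb_\eps(v_n) \to 0 \text{ in } (E^+)^*.
\]
Testing against $v_n$ and using the two identities together with the lower bound on $\|v_n\|$ from Lemma \ref{bvr is well defined} forces $\mu_n \to 0$.

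Next I would establish $\eps$-uniform boundedness of $u_n := v_n + h_\eps(v_n)$ in $E$ along the lines of Lemma \ref{boundedness}: condition $(f3)$, the smallness of $\de_0$ in \eqref{eq:def-de0}, and the a priori bound $\Phi_\eps(u_n) \to b_\eps \leq \ga_\eps \leq \ga(J_{\nu_0}) + o_\eps(1)$ from Proposition \ref{gavr up estimate} yield $\|u_n\|$ bounded, while the extra subcritical perturbation $\la_n \cdot Q_\eps(x)|v_n|^{\theta-2}v_n$ is absorbed by $L^\theta$-interpolation and the boundedness of $Q_\eps$. The finite-dimensional multipliers $\la_n$ are controlled by testing the Euler-Lagrange identity against $(e_i \cdot Q_\eps(x)|v_n|^{\theta-2}v_n)^+$ for a basis $\{e_i\}$ of $X$. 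Passing to subsequences, $v_n \weakto v_\eps$ in $E^+$ and $\la_n \to \la_\eps \in X$. Since the level $b_\eps$ lies below $\ga(J_{\vecmu_V})$ by Lemma \ref{critical attained}, Proposition \ref{PS condition} provides the $(PS)_{b_\eps}$-condition for $\Phi_\eps$; combined with $\mu_n \to 0$ and the compactness of the barycenter-type perturbation on bounded sequences, the weak convergence is upgraded to strong convergence in $E$. This yields $u_\eps \neq 0$, $\cb_\eps(u_\eps^+)=0$, $\Phi_\eps(u_\eps) = b_\eps$, and \eqref{constraint equation} after invoking the reduction identity $D\Phi_\eps(u_\eps)\big|_{E^-} = 0$. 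The final uniform bound on $\{u_\eps\}$ is a direct consequence of the $\eps$-uniform estimate in Lemma \ref{boundedness}.

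The main obstacle is handling the non-local nature of $\cb_\eps$, whose integrations extend over all of $\R^3$ (as emphasized after Lemma \ref{bvr is well defined}). One must simultaneously rule out that mass of $u_n$ escapes to infinity, where $\cb_\eps$ degenerates, and verify that the perturbation $\la_n D\cb_\eps(v_n)$ behaves compactly. Both issues are handled using the truncation: outside $B_2^\eps$ the nonlinearity satisfies $\tilde f(s) \leq \de_0$ with $|V|_\infty + \de_0 < a$, which precludes non-trivial concentration at infinity at positive energy, while the $L^\theta_{loc}$-compactness of $E \hookrightarrow L^\theta$ for $\theta \in (2,3)$ ensures the compactness of the barycenter operator on bounded $E^+$-sequences. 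A secondary subtlety is that the right-hand side of \eqref{constraint equation} carries the projection onto $E^+$, reflecting that the Lagrange condition lives a priori only on $E^+$; the missing $E^-$-component is then recovered automatically from the reduction identity.
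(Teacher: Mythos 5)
Your proposal follows essentially the same route as the paper's (sketched) proof: Ekeland's variational principle on the submanifold $\widetilde{\cm^+}(\Phi_\eps)$, extraction of Lagrange multipliers, a Nehari-type argument to eliminate the multiplier along $\cm^+(\Phi_\eps)$, a boundedness estimate as in Lemma \ref{boundedness}, and then Proposition \ref{PS condition} for strong convergence. You supply more of the details that the paper leaves implicit (the $\mu_n\to0$ step via homogeneity of $\cb_\eps$ and Lemma \ref{key lemma}, and the discussion of why the non-local barycenter perturbation does not destroy compactness), and these are consistent with what the paper's argument implicitly requires.
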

\begin{proof}
We sketch the proof as follows. For $\eps>0$ fixed, by the
Ekeland variational principle, there exists a sequence
$\{w_n\}\subset\widetilde{\cm^+}(\Phi_\eps)$ which is a constrained
$(PS)$-sequence for $\Phi^{red}_\eps$ at level $b_\eps$, moreover, it
can be deduced that there
exists $\{\la_n\}\subset X$ such that
\begin{\equ}\label{constraint1}
\Phi^{red}_\eps(w_n)\to b_\eps, \quad \text{as } n\to\infty,
\end{\equ}
\begin{\equ}\label{constraint2}
D\Phi^{red}_\eps(w_n)-\frac{(\la_n\cdot Q_\eps(x)|w_n|^{\theta-2}w_n)^+}{|w_n|_\theta^\theta}
\to0, \quad \text{as } n\to\infty.
\end{\equ}
Now, let us set $u_n=w_n+h_\eps(w_n)$.
Since $\cb_\eps(u_n^+)=\cb_\eps(w_n)=0$,
by \eqref{constraint1} and \eqref{constraint2},
repeating the arguments of Lemma \ref{boundedness}, we get that
$\{u_n\}$ is bounded in $E$ (uniformly with respect to $\eps$) and,
therefore, up to a subsequence, it converges weakly to some $u_\eps\in E$.
Since we have assumed $0\leq \ka<\bar\ka$, it follows that
$b_\eps\leq \ga_\eps\leq \ga(J_{\nu_0})+o_\eps(1)\leq \ga(J_{\vecmu_V})$ for small $\eps$. By
Proposition \ref{PS condition}, $\{u_n\}$ converges strongly in $E$, i.e.
$u_n\to u_\eps$ as $n\to\infty$. Note that $u_\eps\neq0$, $\liminf_{\eps\to0}b_\eps>0$, also the
sequence $\la_n$ is bounded, we have $u_\eps$ is the desired minimizer and this concludes the proof.
\end{proof}

\begin{Lem}\label{non-vanishing uvr}
We have that $u_\eps^+\chi_{B_2^\eps}$ is non-vanishing.
\end{Lem}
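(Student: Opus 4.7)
\medskip

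\noindent
The plan is to argue by contradiction: suppose $u_\eps^+\chi_{B_2^\eps}$ is vanishing in the sense of P.-L. Lions, i.e.\ for every $r>0$,
\[
\lim_{\eps\to 0}\sup_{y\in\R^3}\int_{B(y,r)\cap B_2^\eps}|u_\eps^+|^2\,dx = 0.
\]
Since $u_\eps^+\chi_{B_2^\eps}$ is bounded in $L^2\cap L^3$ (via $\|u_\eps\|$ bounded in $E$ by Lemma~\ref{STEP1} and the embedding $E\hookrightarrow L^3$), a Lions-type interpolation gives $|u_\eps^+\chi_{B_2^\eps}|_q\to 0$ for every $q\in(2,3)$. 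The goal is to derive $\|u_\eps\|\to 0$, contradicting $\Phi_\eps(u_\eps)=b_\eps\geq\varrho>0$ from Lemma~\ref{bvr is well defined}.

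\medskip

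\noindent
The key computation is a Nehari-type identity in which the constraint $\cb_\eps(u_\eps^+)=0$ eliminates the Lagrange multiplier. Testing equation \eqref{constraint equation} with $u_\eps^+-u_\eps^-\in E$, and observing by $L^2$-orthogonality of $E^\pm$ that
\[
\mathrm{Re}\int_{\R^3}\big(\la_\eps\cdot Q_\eps(x)|u_\eps^+|^{\theta-2}u_\eps^+\big)^+\cdot\overline{(u_\eps^+-u_\eps^-)}\,dx
= \la_\eps\cdot\int_{\R^3}Q_\eps(x)|u_\eps^+|^\theta\,dx = 0,
\]
I obtain
\[
\Big(1-\tfrac{|V|_\infty}{a}\Big)\|u_\eps\|^2 \;\le\; \mathrm{Re}\int_{\R^3} g_\eps(x,|u_\eps|)u_\eps\cdot\overline{(u_\eps^+-u_\eps^-)}\,dx.
\]
On $B_2^{\eps,c}$, $g_\eps=\tilde f\le\de_0\le(a-|V|_\infty)/4$ contributes at most $(\de_0/a)\|u_\eps\|^2$, which is absorbed into the left-hand side. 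On $B_2^\eps$, $(f2)$ yields $f(s)s\le \ka s^2+C_\la s^{p-1}$: the subcritical piece is handled by H\"older with $|u_\eps^+\chi_{B_2^\eps}|_p\to 0$, and, after a parallel Nehari-type test against $u_\eps^-$ on the $E^-$-part of \eqref{constraint equation} (where no multiplier appears), the matching vanishing $|u_\eps^-\chi_{B_2^\eps}|_p\to 0$. The critical piece $\ka\int_{B_2^\eps}|u_\eps|^2|u_\eps^+-u_\eps^-|\,dx$ is absorbed by the threshold $\ka<\bar\ka$ together with the a priori bound $b_\eps\le\ga(J_{\vecmu_V})$, mirroring the Brezis-Lieb-style argument in the proof of Proposition~\ref{PS condition}. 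Collecting these bounds forces $\|u_\eps\|\to 0$, the desired contradiction.

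\medskip

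\noindent
The main obstacle is the transfer of vanishing from $u_\eps^+\chi_{B_2^\eps}$ to $u_\eps^-\chi_{B_2^\eps}$: the decomposition $E=E^+\oplus E^-$ is nonlocal (the projections $P^\pm$ are convolutions with tempered distributions, as noted after Lemma~\ref{bvr is well defined}), so smallness of $u_\eps^+$ on $B_2^\eps$ has no pointwise consequence for $u_\eps^-$ on $B_2^\eps$. The resolution is the bootstrap via the $E^-$-part of the equation \eqref{constraint equation}, namely
\[
\|u_\eps^-\|^2 - \mathrm{Re}\int V_\eps u_\eps\overline{u_\eps^-}\,dx = -\mathrm{Re}\int g_\eps(x,|u_\eps|)u_\eps\overline{u_\eps^-}\,dx,
\]
combined with the outside bound $\tilde f\le\de_0$ and the inside bound from $(f2)$ featuring $u_\eps^+\chi_{B_2^\eps}$: after absorbing a $\tfrac{|V|_\infty+\de_0}{a}\|u_\eps\|\|u_\eps^-\|$ term on the left, the surviving right-hand side is controlled by $|u_\eps^+\chi_{B_2^\eps}|_p$ plus the critical piece handled by $\ka<\bar\ka$. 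The interplay of the three small parameters $\de_0$, $a-|V|_\infty$, and $\bar\ka-\ka$ is precisely what makes this closing estimate go through.
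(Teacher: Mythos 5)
Your strategy has a genuine gap at its central step, the \emph{transfer of vanishing from $u_\eps^+$ to $u_\eps^-$ (or to the full $u_\eps$) on $B_2^\eps$}. Testing the $E^-$-part of \eqref{constraint equation} against $u_\eps^-$ yields the scalar identity
\[
\|u_\eps^-\|^2 = \mathrm{Re}\int_{\R^3}V_\eps\,u_\eps\cdot\ov{u_\eps^-}\,dx
 - \mathrm{Re}\int_{\R^3} g_\eps(x,|u_\eps|)\,u_\eps\cdot\ov{u_\eps^-}\,dx,
\]
which is a \emph{global} equation that controls only $\|u_\eps^-\|$, not a quantity localized to $B_2^\eps$. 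Moreover the nonlinear integrand on $B_2^\eps$ involves $g_\eps(x,|u_\eps|)$, hence $|u_\eps|$, not $|u_\eps^+|$; so the local smallness of $u_\eps^+\chi_{B_2^\eps}$ never enters there, and the estimate is circular in $u_\eps^-$. Since the projections $P^\pm$ are nonlocal convolutions (a point you correctly flag), there is simply no way to extract $|u_\eps^-\chi_{B_2^\eps}|_p\to 0$ from this. Consequently the subcritical part $\int_{B_2^\eps}|u_\eps|^{p-1}|u_\eps^+-u_\eps^-|$ in your Nehari identity is not small, and you cannot force $\|u_\eps\|\to 0$. A second, independent issue: Lions vanishing gives $|u_\eps^+\chi_{B_2^\eps}|_q\to0$ only for $q\in(2,3)$, not $q=3$; the paper in fact \emph{proves} (claim \eqref{claim2} in its proof) that $|u_\eps^+\chi_{B_2^\eps}|_3$ is bounded away from zero, which would sink any attempt to make the critical term small on the $E^+$-side as well.

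The paper sidesteps the transfer problem entirely by never touching $u_\eps^-$. It replaces $u_\eps$ with the pure $E^+$ element $z_\eps=t_\eps u_\eps^+$, where $t_\eps$ maximizes $t\mapsto\Phi_\eps(tu_\eps^+)$ over $t>0$, and uses the key elementary inequality
\[
\max_{t>0}\Phi_\eps(tu_\eps^+)\;\le\;\max_{t>0}\Phi_\eps^{red}(tu_\eps^+)\;=\;b_\eps,
\]
valid because $\Phi^{red}_\eps(w)=\max_{v\in E^-}\Phi_\eps(w+v)\ge\Phi_\eps(w)$. On the ray in $E^+$ the Nehari identity for $z_\eps$ has no $E^-$ contribution, so the Lions vanishing of $u_\eps^+\chi_{B_2^\eps}$ directly controls the subcritical part, and the threshold estimate \eqref{eq:Jmu0-4} applied to $z_\eps$ gives $\ka^2\Phi_\eps(z_\eps)>\ka^2\ga(J_{\vecmu_V})$ once $|u_\eps^+\chi_{B_2^\eps}|_3$ is bounded below. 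Combined with $\Phi_\eps(z_\eps)\le b_\eps\le\ga(J_{\nu_0})\le\ga(J_{\vecmu_V})$ this is the contradiction. Also note the paper's endgame is different from yours: it does not derive $\|u_\eps\|\to0$ (which would contradict $b_\eps\ge\varrho$), but rather shows $b_\eps$ would exceed the a priori upper bound. If you want to salvage your approach you would need to adopt the paper's $E^+$-ray device; without it the localized $E^-$-estimate does not exist.
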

\begin{proof}
We only consider the case $\ka>0$ since it is much easier when $\ka=0$.
To the contrary, we assume that $u_\eps^+\chi_{B_2^\eps}$ vanishes. Then
we have $u_\eps^+\chi_{B_2^\eps}\to0$ in $L^q$ for all $q\in(2,3)$. At this
point we first claim that
\begin{equation}\label{claim2}
  u_\eps^+\chi_{B_2^\eps}\not\to0 \ \text{ in } L^3.
\end{equation}
\noindent
Accepting this fact for the moment, let us consider the function
\[
t\mapsto \Phi_\eps(t u_\eps^+)
\]
and denote $t_\eps>0$ the unique maximum point which realizes its maximum. Then $\{t_\eps\}$ is
bounded. Set $z_\eps=t_\eps u_\eps^+\in E^+$, we have that $D\Phi_\eps(z_\eps)[z_\eps]=0$ and hence
\[
\aligned
\|z_\eps\|^2+\int_{\R^3}V_\eps(x)|z_\eps|^2dx =\int_{\R^3}
(1-\chi_\eps(x))\tilde f(|z_\eps|)|z_\eps|^2dx
+\ka\int_{\R^3}\chi_\eps(x)|z_\eps|^3dx+o_\eps(1).
\endaligned
\]
Since $u_\eps^+\chi_{B_2^\eps}\not\to0$ in $L^3$, similarly as that was argued in
Proposition \ref{PS condition}, we soon have that
\[
\ka^3\int_{\R^3}\chi_\eps(x)|z_\eps|^3dx+o_\eps(1)\geq
\Big( \frac{a^2-(|V|_\infty+\de_0)^2}{a^2} \Big)^{\frac32}S^{\frac32}.
\]
And hence, thanks to our choice of $\ka\in(0,\bar\ka)$,
we get
\[
\aligned
\ka^2\Phi_\eps(z_\eps)&=\ka^2\Big( \Phi_\eps(z_\eps)-\frac12\Phi_\eps'(z_\eps)[z_\eps] \Big)\\
&\geq\Big( \frac{a^2-(|V|_\infty+\de_0)^2}{a^2} \Big)^{\frac32}\frac{S^{\frac32}}6
+o_\eps(1)\\
&>\ka^2 \ga(J_{\vecmu_V}).
\endaligned
\]
Therefore, we have that
\[
\ga(J_{\vecmu_V})<\Phi_\eps(z_\eps)\leq \max_{t>0}\Phi^{red}_\eps(tu_\eps^+)=b_\eps\leq \ga(J_{\nu_0})
\quad \text{as } \eps \to0
\]
which is impossible due to Lemma \ref{critical attained}.

\medskip

Now, it remains to show \eqref{claim2} is valid. Indeed, it follows from Lemma \ref{STEP1} that, for some $C>0$,
\[
\aligned
b_\eps&=\Phi_\eps(u_\eps)=\max_{t>0}\Phi^{red}_\eps(tu_\eps^+)
\geq\max_{t>0}\Phi_\eps(tu_\eps^+)\\
&\geq\max_{t>0}\Big[\,\frac{t^2}2\Big(1-\frac{|V|_\infty+\de_0}{a}
\Big)\|u_\eps^+\|^2
-C\ka t^3\int_{B_2^\eps}|u_\eps^+|^3dx\Big].
\endaligned
\]
Then, if $u_\eps^+\chi_{B_2^\eps}\to0$ in $L^3$ as $\eps\to0$,
we can choose $T_0>0$
(independent of $\eps$) large enough such that
$\Phi_\eps(T_0 u_\eps^+)>2\ga(J_{\vecmu_V})$ for all small $\eps>0$, and we
soon have that
\[
\liminf_{\eps\to0}b_\eps\geq\liminf_{\eps\to0}\Phi_\eps(T_0 u_\eps^+)> \ga(J_{\vecmu_V})
\]
which is absurd.
\end{proof}

\begin{Lem}
We have that $\{\la_\eps\}\subset X$ is bounded.
\end{Lem}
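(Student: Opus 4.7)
The plan is to exploit the Euler--Lagrange identity
\[
D\Phi^{red}_\eps(u_\eps^+)[\phi]=\la_\eps\cdot\int_{\R^3}Q_\eps(x)|u_\eps^+|^{\theta-2}u_\eps^+\cdot\bar\phi\,dx\qquad(\phi\in E^+)
\]
with the specific choice of test functions $\phi_j:=\pa_{e_j}u_\eps^+$, where $\{e_j\}_{j=1}^{\dim X}$ is an orthonormal basis of $X$. By Proposition~\ref{invariant under derivatives} the spatial derivatives preserve the $E^+$ component, so $\phi_j\in E^+$ is admissible. The crucial feature of this choice is that both sides of the identity turn out to be of order $\eps$, and cancellation of a common factor $\eps$ produces a uniform bound on each component $\la_\eps^j:=\la_\eps\cdot e_j$.

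For the left-hand side, I would use the reduction property $D\Phi_\eps(u_\eps)|_{E^-}=0$ to write $D\Phi^{red}_\eps(u_\eps^+)[\pa_{e_j}u_\eps^+]=D\Phi_\eps(u_\eps)[\pa_{e_j}u_\eps]$. Recognising this quantity as $-\tfrac{d}{dt}|_{t=0}\Phi_\eps(u_\eps(\cdot-te_j))$ and using that translations preserve the decomposition $E=E^+\op E^-$, only the $x$-dependent coefficients $V_\eps$ and $\chi_\eps$ contribute. A change of variables extracts a factor $\eps$ from $\nabla V(\eps x)$ and $\nabla\chi(\eps x)$, yielding
\[
D\Phi^{red}_\eps(u_\eps^+)[\pa_{e_j}u_\eps^+]=-\frac\eps2\int(\pa_{e_j}V)(\eps x)|u_\eps|^2dx + \eps\int(\pa_{e_j}\chi)(\eps x)\big(F(|u_\eps|)-\tilde F(|u_\eps|)\big)dx=O(\eps),
\]
thanks to the Lipschitz bounds on $V,\chi$ and boundedness of $\{u_\eps\}$ in $E\hookrightarrow L^2\cap L^p\cap L^3$ (Lemma~\ref{STEP1}).

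For the right-hand side, the explicit formula $Q_\eps(x)\cdot e_l=\zeta_l(\eps x)$ gives $\pa_{e_j}(Q_\eps\cdot e_l)=\eps\de_{jl}$ on $\{|\eps x|\le 3R_1\}$ and a bounded $O(\eps)$ correction elsewhere. Applying $\real\big(|u_\eps^+|^{\theta-2}u_\eps^+\cdot\overline{\pa_{e_j}u_\eps^+}\big)=\tfrac1\theta\pa_{e_j}|u_\eps^+|^\theta$ and integrating by parts rearranges the Lagrange identity into the linear system
\[
\big(|u_\eps^+|_\theta^\theta\,I + M_\eps\big)\la_\eps=R_\eps,\qquad |R_\eps|\le C,
\]
where $M_\eps$ is a perturbation matrix with $|M_\eps^{jl}|\le 2\int_{\{|\eps x|>3R_1\}}|u_\eps^+|^\theta\,dx$ and $R_\eps$ is bounded thanks to the previous paragraph. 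Since Lemma~\ref{non-vanishing uvr} combined with the $E$-bound yields $|u_\eps^+|_\theta^\theta\ge c_0>0$, it suffices to verify that this diagonal term dominates $M_\eps$, after which the inverse of the matrix is uniformly bounded and hence so is $\la_\eps$.

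The main obstacle is precisely this diagonal-dominance step: the cancellation of the common factor $\eps$ only produces the bound on $\la_\eps$ if $M_\eps$ does not overwhelm $|u_\eps^+|_\theta^\theta I$. Since the exponential decay of $u_\eps$ is only established later in Section~\ref{sec:Profile}, one must show directly that the bulk of the $L^\theta$-mass of $u_\eps^+$ lies inside $B_2^\eps$. This concentration can be extracted by a vanishing/dichotomy analysis in the spirit of the proof of Lemma~\ref{non-vanishing uvr}: were a non-negligible fraction of the mass to escape outside $B_2^\eps$, the nonlinearity $f$ would be replaced there by the smaller $\tilde f$, producing an effective energy strictly above $\ga(J_{\nu_0})$ and contradicting $b_\eps\le\ga(J_{\nu_0})+o_\eps(1)$ from Proposition~\ref{gavr up estimate}, in combination with the strict monotonicity of $\ga(I_\nu)$ from Lemma~\ref{critical attained}.
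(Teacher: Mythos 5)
Your overall strategy — test the Euler--Lagrange identity with spatial derivatives, extract a common factor of $\eps$ from the left-hand side via the chain rule on $V(\eps x)$ and $\chi(\eps x)$, and integrate by parts on the right — is the same strategy as the paper's, and your computation of the $O(\eps)$ size of the left-hand side matches equations \eqref{lm1}--\eqref{lm5}. The divergence, and the genuine gap, is in how the right-hand side is handled.

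You test with the full coordinate basis $\{\pa_{e_j}u_\eps^+\}$ and obtain a linear system $(|u_\eps^+|_\theta^\theta I + M_\eps)\la_\eps = R_\eps$, which forces you into a diagonal-dominance argument requiring that $\int_{(B_3^\eps)^c}|u_\eps^+|^\theta\,dx$ be a small fraction of $|u_\eps^+|_\theta^\theta$. You correctly flag this as the obstacle, but the fix you sketch is circular: ruling out a dichotomy in which a non-negligible fraction of the $L^\theta$-mass escapes outside $B_2^\eps$ is precisely what the profile decomposition of Proposition~\ref{u-vr converges} does, and that proposition passes to the limit in \eqref{constraint equation} and therefore \emph{uses} the boundedness of $\la_\eps$. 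Lemma~\ref{non-vanishing uvr} only provides a non-vanishing lower bound on the mass inside $B_2^\eps$; it does not control the tail, and the energy monotonicity from Lemma~\ref{critical attained} alone does not rule out a multi-bubble splitting at this stage.

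The paper avoids this entirely by testing with a \emph{single}, well-chosen direction, $\tilde\la_\eps = \la_\eps/|\la_\eps|$, rather than a basis. The payoff is a sign: one computes
\[
\pa_{\tilde\la_\eps}\big(\la_\eps\cdot Q_\eps(x)\big)
  = \begin{cases}
      \eps|\la_\eps|, & x\in B_3^\eps,\\[3pt]
      \dfrac{3R_1|\la_\eps|}{|x|}\Big[1-\dfrac{(\la_\eps\cdot x)^2}{|\la_\eps|^2|x|^2}\Big], & x\notin B_3^\eps,
    \end{cases}
\]
so that $0\le\pa_{\tilde\la_\eps}(\la_\eps\cdot Q_\eps(x))\le\eps|\la_\eps|$ for \emph{all} $x$. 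After integrating by parts, the Lagrange term becomes $-\frac1\theta\int\pa_{\tilde\la_\eps}(\la_\eps\cdot Q_\eps)\,|u_\eps^+|^\theta\,dx$, a sum of two nonpositive contributions. Hence the tail integral over $(B_3^\eps)^c$ can only reinforce, never cancel, the inner part, and the bound
\[
\frac{\eps|\la_\eps|}{\theta}\int_{B_3^\eps}|u_\eps^+|^\theta\,dx
  \;\le\;\Bigabs{\real\int\la_\eps\cdot Q_\eps(x)|u_\eps^+|^{\theta-2}u_\eps^+\cdot\ov{\pa_{\tilde\la_\eps}u_\eps^+}\,dx}
  \;=\;O(\eps)
\]
follows unconditionally. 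Only a \emph{lower} bound on the inner mass is needed, which is exactly what Lemma~\ref{non-vanishing uvr} supplies. Your argument would be repaired by replacing the orthonormal basis with the single direction $\tilde\la_\eps$ and observing the sign of $\pa_{\tilde\la_\eps}(\la_\eps\cdot Q_\eps)$; without that observation, the diagonal-dominance step is a real gap.
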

\begin{proof}
Let us assume that $\la_\eps\neq0$, otherwise we are done. In the sequel, let us set
$\tilde\la_\eps=\la_\eps/|\la_\eps|$. 
By elliptic regularity arguments we have that $u_\eps\in \cap_{q\geq2}W^{1,q}(\R^3,\C^4)$
and then, jointly with Proposition \ref{invariant under derivatives},
we are allowed to multiply \eqref{constraint equation}
by $\pa_{\tilde\la_\eps}u_\eps$. Then, we have
\begin{\equ}\label{lm1}
\aligned
&\real\int_{\R^3}\Big(
-i\al\cdot\nabla u_\eps+a\be u_\eps + V_\eps(x) u_\eps - g_\eps(x,|u_\eps|)u_\eps \Big)
\cdot \ov{\pa_{\tilde\la_\eps}u_\eps} \,dx \\
&=\real\int_{\R^3}\la_\eps\cdot Q_\eps(x)|u_\eps^+|^{\theta-2}
u_\eps^+\cdot
\ov{\pa_{\tilde\la_\eps}u_\eps^+} \,dx.
\endaligned
\end{\equ}
Now, let us evaluate each term
of the previous equality. We get
\[
\aligned
0=\real\int_{\R^3}\pa_{\tilde\la_\eps}\big[
(-i\al\cdot\nabla u_\eps)\cdot \ov{u_\eps} \,\big]dx
&=2\real\int_{\R^3}(-i\al\cdot\nabla u_\eps)\cdot\ov{\pa_{\tilde\la_\eps}u_\eps}dx
\endaligned
\]
and so
\begin{\equ}\label{lm2}
\real\int_{\R^3}(-i\al\cdot\nabla u_\eps)\cdot\ov{\pa_{\tilde\la_\eps}u_\eps}dx
=0
\end{\equ}
Analogously, we have
\[
\aligned
0&=\int_{\R^3}\pa_{\tilde\la_\eps}\big[ V_\eps(x) |u_\eps|^2\big]dx\\
&=\eps\int_{\R^3}\pa_{\tilde\la_\eps}V(\eps x) |u_\eps|^2 dx
+2\real\int_{\R^3}V_\eps(x)u_\eps\cdot\ov{\pa_{\tilde\la_\eps}u_\eps}dx
\endaligned
\]
and so
\begin{\equ}\label{lm3}
\aligned
\real\int_{\R^3}V_\eps(x)u_\eps\cdot\ov{\pa_{\tilde\la_\eps}u_\eps}dx&=
-\frac\eps2\int_{\R^3}\pa_{\tilde\la_\eps}V(\eps x) |u_\eps|^2 dx
= O(\eps).
\endaligned
\end{\equ}
It also follows that
\begin{\equ}\label{lm4}
\real\int_{\R^3}a\be u_\eps\cdot\ov{\pa_{\tilde\la_\eps}u_\eps}dx=0.
\end{\equ}
For the nonlinear part, let us recall the definition of $G_\eps$,
\[
\pa_{\tilde\la_\eps}G_\eps(x,|u_\eps|)=\eps\pa_{\tilde\la_\eps}\chi(\eps x)\big(
F(|u_\eps|)-\tilde F(|u_\eps|)\big) +\real \,g_\eps(x,|u_\eps|)u_\eps\cdot\ov{\pa_{\tilde\la_\eps}u_\eps},
\]
then we have
\[
\aligned
0&=\int_{\R^3}\pa_{\tilde\la_\eps}\big[ G_\eps(x,|u_\eps|) \big]dx\\
&=\eps\int_{\R^3}\big(F(|u_\eps|)-\tilde F(|u_\eps|)\big)\big( \pa_{\tilde\la_\eps}\chi(\eps x) \big)
dx + \real\int_{\R^3} g_\eps(x,|u_\eps|)u_\eps\cdot\ov{\pa_{\tilde\la_\eps}u_\eps}dx
\endaligned
\]
and it follows that
\begin{\equ}\label{lm5}
\real\int_{\R^3} g_\eps(x,|u_\eps|)u_\eps\cdot\ov{\pa_{\tilde\la_\eps}u_\eps}dx
=O(\eps).
\end{\equ}
Finally
\[
\aligned
0&=\int_{\R^3}\pa_{\tilde\la_\eps}\big[ \la_\eps\cdot Q_\eps(x)|u_\eps^+|^\theta \big]dx\\
&=\eps|\la_\eps|\int_{B_3^\eps}|u_\eps^+|^\theta dx+\eps |\la_\eps|\int_{\R^3\setminus B_3^\eps}
\frac{R_3}{\eps|x|}
\Big[ 1- \frac{(\la_\eps\cdot x)^2}{|\la_\eps|^2|x|^2}  \Big] |u_\eps^+|^\theta dx\\
&\qquad
+\theta\real\int_{\R^3}\la_\eps\cdot Q_\eps(x)|u_\eps^+|^{\theta-2}
u_\eps^+\cdot\ov{\pa_{\tilde\la_\eps}u_\eps^+}dx
\endaligned
\]
Observe that $0\leq \pa_{\tilde\la_\eps}\la_\eps\cdot Q_\eps(x)\leq \eps|\la_\eps|$ for all
$x\in\R^3\setminus B_3^\eps$; this is the key point of our estimates.
And hence
\begin{\equ}\label{lm6}
\aligned
\real\int_{\R^3}\la_\eps\cdot Q_\eps(x)|u_\eps^+|^{\theta-2}
u_\eps^+\cdot\ov{\pa_{\tilde\la_\eps}u_\eps^+}dx
&=-\frac{\eps|\la_\eps|}\theta\int_{B_3^\eps}|u_\eps^+|^2dx  \\
&\qquad
-\frac{\eps |\la_\eps|}\theta\int_{\R^3\setminus B_3^\eps}\frac{R_3}{\eps|x|}
\Big[ 1- \frac{(\la_\eps\cdot x)^2}{|\la_\eps|^2|x|^2}  \Big] |u_\eps^+|^2dx.
\endaligned
\end{\equ}
By \eqref{lm1}-\eqref{lm6} and Lemma \ref{non-vanishing uvr}, we conclude
the boundedness of $\la_\eps\in X$.
\end{proof}

%

In what follows, we consider a sequence $\eps_k\to0$ and assume that
$\la_{\eps_k}\to\bar\la\in X$. For simplicity, we still denote $\eps_k$ by $\eps$.
For a small $\de>0$, let us define
\[
H_\eps=\big\{ x\in\R^3:\, \bar\la\cdot Q_\eps(x)\leq \de \big\}.
\]

The next proposition gives a complete description of $u_\eps$ as $\eps\to0$. We recall the notations $B_2=B(0,2R_1)$ and $B_3=B(0,3R_1)$.

\begin{Prop}\label{u-vr converges}
Passing to a subsequence if necessary, there exist
 $y_\eps^1\in H_\eps$, $y_1\in B_2$ and $u_1\in E\setminus\{0\}$
with
\[
-i\al\cdot\nabla u_1+a\be u_1 + V(y_1)u_1 = g(y_1,|u_1|)u_1,
\]
such that $\bar\la\cdot y_1=0$ and
\[
\eps y_\eps^1\to y_1, \quad
\|u_\eps-u_1(\cdot-y_\eps^1)\|\to0 \quad \text{as } \eps\to0.
\]
\end{Prop}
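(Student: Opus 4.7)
My plan is a concentration--compactness argument on the bounded sequence $\{u_\eps\}$ combined with the splitting result Proposition \ref{h-vr converges} to pass to the limit in the translated Euler--Lagrange equation \eqref{constraint equation}.

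First I would produce the concentration point. Lemma \ref{non-vanishing uvr} shows that $u_\eps^+\chi_{B_2^\eps}$ is non-vanishing in $L^q$ for some $q\in(2,3]$, hence by a standard Lions-type lemma there exist $\rho,R>0$ and $y_\eps^1\in\R^3$ with $\int_{B(y_\eps^1,R)}|u_\eps^+|^2\,dx\ge\rho$, and moreover the center can be arranged to lie in $B_2^\eps$ (after a bounded shift). Consequently $\eps y_\eps^1$ is bounded, and up to a subsequence $\eps y_\eps^1\to y_1\in\ov{B_2}$. Next I would ensure $y_\eps^1\in H_\eps$: if every vanishing-correction center of $u_\eps^+$ were to violate $\bar\la\cdot Q_\eps\le\de$, then, since $\bar\la\cdot Q_\eps\ge-|\bar\la|\cdot 3R_1$ is bounded below, the constraint $\cb_\eps(u_\eps^+)=0$ would force $\int\bar\la\cdot Q_\eps(x)|u_\eps^+|^\theta\,dx\ge c\rho>0$, a contradiction. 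Hence (after possibly relabelling the concentration centers) we may take $y_\eps^1\in H_\eps$.

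The second step is the limit equation. Set $\tilde u_\eps(x):=u_\eps(x+y_\eps^1)\weakto u_1$ in $E$; the mass bound guarantees $u_1\ne0$. The translated equation reads
\[
-i\al\cdot\nabla\tilde u_\eps+a\be\tilde u_\eps+V(\eps x+\eps y_\eps^1)\tilde u_\eps
=g(\eps x+\eps y_\eps^1,|\tilde u_\eps|)\tilde u_\eps
+\bigl(\la_\eps\cdot Q_\eps(x+y_\eps^1)|\tilde u_\eps^+|^{\theta-2}\tilde u_\eps^+\bigr)^+.
\]
Since $\eps y_\eps^1\to y_1\in B_3$ and $\bar\la\in X$, one has $\la_\eps\cdot Q_\eps(x+y_\eps^1)\to\bar\la\cdot y_1$ locally uniformly in $x$, while $V(\eps\cdot+\eps y_\eps^1)\to V(y_1)$ and $g(\eps\cdot+\eps y_\eps^1,\cdot)\to g(y_1,\cdot)$. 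To obtain the claimed limit equation I must prove $\bar\la\cdot y_1=0$. The inclusion $y_\eps^1\in H_\eps$ already gives $\bar\la\cdot y_1\le\de$; the reverse inequality comes from the integrated constraint $\int\bar\la\cdot Q_\eps(x)|u_\eps^+|^\theta\,dx=0$. Writing $|u_\eps^+|^\theta=|u_1^+(\cdot-y_\eps^1)|^\theta+(\text{vanishing remainder and further profiles})$ and using that any further concentration center lies in $H_\eps$ as well (by the same argument, applied iteratively to the remainder), the constraint forces $\bar\la\cdot y_1=0$. Passing to the limit then yields $-i\al\cdot\nabla u_1+a\be u_1+V(y_1)u_1=g(y_1,|u_1|)u_1$.

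The final step is upgrading to strong convergence. I would apply Proposition \ref{h-vr converges}(2)--(3) to the reduced functional, obtaining the energy splitting
\[
\Phi^{red}_\eps(u_\eps^+)=\Phi^{red}_{y_1}(u_1^+)+\Phi^{red}_{\eps,y_\eps^1}(\tilde u_\eps^+-u_1^+)+o(1),
\quad D\Phi^{red}_{\eps,y_\eps^1}(\tilde u_\eps^+-u_1^+)=o(1).
\]
Now $\Phi_{y_1}^{red}(u_1^+)\ge\ga(I_{V(y_1)})\ge\ga(I_{\nu_0})$ by the monotonicity of Lemma \ref{critical attained} (together with the structure $(V1)$/$(V2)$ which makes $V(y_1)\ge V(0)$ for $y_1$ in the relevant region), while Proposition \ref{gavr up estimate} gives $b_\eps\le\ga_\eps\le\ga(I_{\nu_0})+o(1)$. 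Combined these force $\Phi^{red}_{\eps,y_\eps^1}(\tilde u_\eps^+-u_1^+)\to 0$; the PS-condition for the shifted truncated functional (Proposition \ref{PS condition}) then yields $\|\tilde u_\eps-u_1\|\to0$, which is the desired strong convergence.

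The main obstacle I anticipate is the identity $\bar\la\cdot y_1=0$, because it mixes the pointwise membership $y_\eps^1\in H_\eps$ with the global integral constraint $\cb_\eps(u_\eps^+)=0$, and a priori $u_\eps^+$ could carry several concentration profiles contributing to the constraint with opposite signs. Controlling this requires the structural fact that $Q_\eps$ is the projection of a bounded cut-off onto $X$ so that $\bar\la\cdot Q_\eps$ stays bounded and the sign analysis is governed entirely by the limit positions $\eps y_\eps^i$, together with the energy bound $b_\eps\le\ga(I_{\nu_0})+o(1)$ which caps the number of admissible profiles.
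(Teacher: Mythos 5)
The proposal follows the same broad concentration--compactness plus energy-splitting philosophy as the paper, and correctly identifies that the main difficulty is pinning down $\bar\la\cdot y_1=0$. However, several steps that you sketch as straightforward are in fact the technical heart of the paper's argument, and a couple of your proposed shortcuts do not go through.

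First, the boundedness of $\{\eps y_\eps^1\}$ is not obtained the way you suggest. You try to get boundedness for free by extracting the concentration center from the non-vanishing of $u_\eps^+\chi_{B_2^\eps}$ (so that $\eps y_\eps^1\in \ov{B_2}$ automatically), and then argue a posteriori that $y_\eps^1\in H_\eps$. But nothing prevents that $B_2^\eps$-center from lying in $B_2^\eps\setminus H_\eps$ while a separate piece of mass sits in $H_\eps$; the constraint $\cb_\eps(u_\eps^+)=0$ only tells you that $L^\theta(H_\eps)$-mass cannot vanish, not that your chosen center lies there. The paper proceeds in the reverse order: it produces the center from the non-vanishing on $H_\eps$ (so that $y_\eps^1\in H_\eps$ is immediate) and then proves $\eps y_\eps^1\to y_1\in B_2$ via a delicate contradiction argument (Claim \ref{y1 bdd R2}), showing that if the center escaped $B_2$ then the translated limit would solve a constrained equation with the truncated nonlinearity $\tilde f$ whose critical level would exceed $2\ga(J_{\nu_0})$ for $\de$ small, violating the Fatou-type energy bound. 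That Claim is a genuine piece of the proof, not something to be waved away.

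Second, your derivation of $\bar\la\cdot y_1=0$ is far too quick. The inclusion $y_\eps^1\in H_\eps$ only gives $\bar\la\cdot y_1\le\de$, and after that you need to control potentially several further profiles $y_j$ whose $\bar\la\cdot y_j$ may have opposite signs. The paper splits into two cases. In Case 1 (all $\bar\la\cdot y_j\ge0$) the constraint forces all of them to vanish and each profile is a genuine critical point of $\Phi_{y_j}$, contributing at least $\ga(J_{V(y_j)})$; combined with a strict gap $c'>0$ for the non-vanishing remainder this contradicts the upper bound $b_\eps\le\ga(J_{\nu_0})+o(1)$. In Case 2 (some $\bar\la\cdot y_{j_m}<0$) the $u_{j_m}$ are critical points of a \emph{constrained} functional, and the crucial quantitative strictness $\mst^{red}_{y_{j_m}}(u_{j_m}^+)-\tfrac12 D\mst^{red}_{y_{j_m}}(u_{j_m}^+)[u_{j_m}^+]>\ga(J_{V(y_{j_m})})$ is precisely where Lemma \ref{key lemma} is used (the unique Nehari parameter $t_m$ satisfies $t_m<1$). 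Your sketch omits this entirely.

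Third, your strong-convergence step contains a circularity. You invoke $\ga(I_{V(y_1)})\ge\ga(I_{\nu_0})$ by appealing to monotonicity and the structure of $(V1)$/$(V2)$ giving $V(y_1)\ge V(0)$. But the sign condition $V(y_1)\ge V(0)$ is available only after one knows $y_1\in X^\bot$, which is exactly the content of the Corollary \emph{following} Proposition \ref{u-vr converges} and requires the proposition already proved. The paper avoids this by not assuming $V(y_1)\ge V(0)$: it instead shrinks $R_1$ so that the continuity of $\nu\mapsto\ga(J_\nu)$ (Lemma \ref{critical attained}) makes $|\ga(J_{V(y_j)})-\ga(J_{\nu_0})|<\tfrac12 c'$, which suffices for the contradiction. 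Finally, your appeal to the Palais--Smale condition (Proposition \ref{PS condition}) to force $\|\tilde u_\eps-u_1\|\to0$ misapplies it: that proposition is about a PS sequence for a \emph{fixed} $\eps$, not about a family $(u_\eps)_\eps$ with varying $\eps\to0$; the paper instead closes the argument by showing directly that a non-vanishing remainder $z_{l,\eps}$ would push $b_\eps$ strictly above $\ga(J_{\nu_0})$.
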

\begin{proof}
We divide the proof into different steps:

\medskip
\textbf{Step 1.} $u_\eps^+|_{H_\eps}\not\to0$ in the $L^2$-norm and $L^\theta$-norm.

Let us first show that $u_\eps^+\not\to 0$ in $L^\theta(H_\eps)$. Suppose contrarily that
\[
\int_{H_\eps}|u_\eps^+|^{\theta}dx\to0, \quad \text{as } \eps\to0.
\]
Since $\cb_\eps(u_\eps^+)=0$ and $\bar\la\in X$, we have
\[
\aligned
0=\int_{\R^3}\bar\la\cdot Q_\eps(x)|u_\eps^+|^\theta dx&=
\int_{H_\eps}\bar\la\cdot Q_\eps(x)|u_\eps^+|^\theta dx+
\int_{H_\eps^c}\bar\la\cdot Q_\eps(x)|u_\eps^+|^\theta dx\\
&\geq \de\int_{H_\eps^c}|u_\eps^+|^\theta dx
+\int_{H_\eps}\bar\la\cdot Q_\eps(x)|u_\eps^+|^\theta dx.
\endaligned
\]
Therefore
\[
\de\int_{H_\eps^c}|u_\eps^+|^\theta dx\leq \Big| \int_{H_\eps}\bar\la\cdot Q_\eps(x)|u_\eps^+|^\theta dx\Big|
\leq |\bar\la| R_3\int_{H_\eps}|u_\eps^+|^{\theta}dx
\]
and so
\[
\int_{H_\eps^c}|u_\eps^+|^\theta dx\to 0, \quad \text{as } \eps\to0.
\]
Then we get $u_\eps^+\to0$ in $L^\theta$ which is a contradiction
with Lemma \ref{non-vanishing uvr}. Now, by the boundedness of
$\{u_\eps\}$ in $E$ and so in $L^3$, we can conclude by
interpolation: for a suitable $\mu\in(0,1)$
\[
0<c\leq\|u_\eps^+\|_{L^{\theta}(H_\eps)}\leq\|u_\eps^+\|_{L^2(H_\eps)}^\mu
\|u_\eps^+\|_{L^3(H_\eps)}^{1-\mu}\leq C \|u_\eps^+\|_{L^2(H_\eps)}^\mu.
\]

\medskip
\textbf{Step 2.} Passing to be limit by concentration-compactness.

By Step 1, we can conclude that $\{u_\eps^+|_{H_\eps}\}$ is non-vanishing.
And hence, by concentration-compactness arguments (see \cite{Lions}),
there exist $y_\eps^1\in H_\eps$ and $r>0$ such that
\[
\int_{B(y_\eps^1,r)\cap H_\eps} |u_\eps^+|^2\geq c>0.
\]
Therefore there exits $u_1\in E\setminus\{0\}$ such that
$v_\eps^1=u_\eps(\cdot+y_\eps^1)\weakto u_1$ in $E$.
\begin{claim}\label{y1 bdd R2}
$\{\eps y_\eps^1\}$  is bounded and, up to a subsequence, $\eps y_\eps^1\to y_1\in B_2$ as $\eps\to0$.
\end{claim}

\medskip
To see this, let us assume that $\eps y_\eps^1\not\in B_2$ and
$\dist(\eps y_\eps^1, \pa B_2)/\eps\to\infty$.
Observe that $v_\eps^1$ solves the equation
\[
-i\al\cdot\nabla v_\eps^1+a\be v_\eps^1+ V(\eps x + \eps y_\eps^1) v_\eps^1
=g(\eps x+ \eps y_\eps^1,|v_\eps^1|)v_\eps^1+
\big( \la_\eps\cdot Q_\eps(x+y_\eps^1) |v_\eps^{1+}|^{\theta-2}v_\eps^{1+}\big)^+,
\]
and if we assume that $V(\eps y_\eps^1)\to \nu_1$ as $\eps\to0$ (passing to
a subsequence), we have that
$u_1$ is a weak solution of
\begin{\equ}\label{limit equ 1}
-i\al\cdot\nabla u+a\be u +\nu_1 u =\tilde f(|u|)u +
\big(\, \bar\la\cdot\tilde y_1 |u^+|^{\theta-2}u^+ \big)^+
\end{\equ}
where $\tilde y_1\in  B_3$ is given by
\[
\tilde y_1=\left\{
\aligned
&\lim_{\eps\to0}\eps y_\eps^1 \quad & \text{if } \eps y_\eps^1\in B_3,\\
&\lim_{\eps\to0}\frac{3R_1 y_\eps^1}{|y_\eps^1|} \quad & \text{if } \eps y_\eps^1\in B_3^c.
\endaligned\right.
\]
Since $y_\eps^1\in H_\eps$, we have that $\bar\la\cdot \tilde y_1\leq \de$
and, by the definition of $\tilde f$,
we easily get that $\bar\la\cdot \tilde y_1>0$ (otherwise
$u_1^+$ should be $0$). Now we let $\widetilde\Phi_1: E\to\R$ denote the
associated energy functional for \eqref{limit equ 1}, that is
\[
\widetilde\Phi_1(u)=\frac12\big( \|u^+\|^2-\|u^-\|^2 \big) +\frac{\nu_1}2|u|_2^2
-\int_{\R^3}\tilde F(|u|)dx-\frac{\bar\la\cdot\tilde y_1}\theta\int_{\R^3}|u^+|^\theta dx.
\]
Remark that, for any $u\in E$ with $u^+\neq0$ and arbitrary $v\in E$, there holds
that
\[
\bar\la\cdot\tilde y_1\int_{\R^3}|u^+|^{\theta-2}|v^+|^2dx+
(\theta-2)\bar\la\cdot\tilde y_1\int_{\R^3} |u^+|^{\theta-2}
\Big( |u^+|+\frac{\real\, u^+\cdot \ov{v^+}}{|u^+|} \Big)^2dx>0.
\]
As a consequence of \cite[Theorem 5.1]{Ackermann} (see also
\cite[Lemma 4.6]{Ding-Xu-Trans}), we have that Theorem~\ref{thm:red-couple}
applies to the situation here. So, we can take
$(\widetilde \Phi_1^{red}, \tilde h_1)$
to be the reduction couple for $\widetilde\Phi_1$ and
let $\tilde \ga_1$ stand for the critical level realized by $u_1$,
we then have
\begin{eqnarray*}
\tilde\ga_1&=&\widetilde \Phi^{red}_1(u_1^+)=\max_{t>0}\widetilde\Phi_1^{red}(t u_1^+)
\geq\max_{t>0}\widetilde\Phi_1(t u_1^+)\\
&\geq&\max_{t>0} \frac{t^2}2\big( \|u_1^+\|^2- (|V|_\infty+\de_0)|u_1|_2^2\big)
-\frac{\bar\la\cdot\tilde y_1}\theta t^\theta \int_{\R^3}|u_1^+|^\theta dx\\
&\geq&\max_{t>0} \frac{t^2}2\big( \|u_1^+\|^2- (|V|_\infty+\de_0)|u_1|_2^2\big)
-\frac{\de}\theta t^\theta \int_{\R^3}|u_1^+|^\theta dx.
\end{eqnarray*}
Since $\|u_1\|\leq \|v_\eps^1\|=\|u_\eps\|<\infty$, we can conclude that $\tilde\ga_1>2\ga(J_{\nu_0})$
provided that $\de$ is fixed small enough. However, by Fatou's lemma, we get
\[
\aligned
\tilde\ga_1&=\widetilde\Phi_1(u_1)-\frac12D\widetilde\Phi_1(u_1)[u_1]
=\int_{\R^3}\frac12\tilde f(|u_1|)|u_1|^2-\tilde F(|u_1|)dx
+\big(\frac12-\frac1\theta\big)\bar\la\cdot\tilde y_1 |u_1^+|_\theta^\theta\\
&\leq\int_{\R^3}\frac12\tilde f(|u_1|)|u_1|^2-\tilde F(|u_1|)dx + O(\de)\\
&\leq O(\de)+\liminf_{\eps\to0}\int_{\R^3}
\frac12 g(\eps x + \eps y_\eps^1,|v_\eps^1|)|v_\eps^1|^2
- G(\eps x+\eps y_\eps^1, |v_\eps^1|)dx\\
&=O(\de)+\liminf_{\eps\to0}\Phi_\eps(u_\eps) \leq 2\ga(J_{\nu_0})
\endaligned
\]
which is impossible. This proves the claim.

\medskip

Now by Claim \ref{y1 bdd R2}, passing to the limit,
we have $u_1$ is a weak solution of
\[
-i\al\cdot\nabla u_1+a\be u_1 +V(y_1) u_1 = g(y_1,|u_1|)u_1
+\big(\, \bar\la\cdot y_1 |u_1^+|^{\theta-2}u_1^+ \big)^+,
\]
with $\eps y_\eps^1\to y_1\in B_2$ such that $\bar\la\cdot y_1\leq \de$
and there exits $\bar c>0$ such that
\[
\|u_\eps\|\geq\|u_1\|\geq \bar c>0.
\]
Let us define $z_{1,\eps}=u_\eps-u_1(\cdot-y_\eps^1)$. We consider two
possibilities: either $\|z_{1,\eps}^+\|\to0$ or not. In the first case the
proposition should be proved. In the second case, there are two
sub-cases: either $z_{1,\eps}^+|_{H_\eps}\to0$ in the $L^\theta$-norm or not.

\medskip
\textbf{Step 3.} Assume that $z_{1,\eps}^+|_{H_\eps}\not\to0$ in the $L^\theta$-norm.

In this case, we can repeat the previous argument to the sequence $\{z_{1,\eps}\}$
to obtain $y_\eps^2\in H_\eps$ such that
\[
\int_{B(y_\eps^2,r)\cap H_\eps} |z_{1,\eps}^+|^2\geq c>0.
\]
Therefore there exists $u_2\in E\setminus\{0\}$ such that
$v_\eps^2=z_{1,\eps}(\cdot+y_\eps^2)\weakto u_2$ in $E$.
Moreover, $|y_\eps^1-y_\eps^2|\to\infty$, $\eps y_\eps^2\to y_2\in B_2$, $\bar\la\cdot y_2\leq \de$
and
\[
-i\al\cdot\nabla u_2+a\be u_2 +V(y_2) u_2 = g(y_2,|u_2|)u_2
+\big(\, \bar\la\cdot y_2 |u_2^+|^{\theta-2}u_2^+ \big)^+,
\]
and $\|u_2\|\geq\bar c>0$. Also, it follows from the weak convergence,
\[
\|u_\eps\|^2\geq\|u_1\|^2+\|u_2\|^2.
\]
Let us set $z_{2,\eps}=u_\eps-u_1(\cdot-y_\eps^1)-u_2(\cdot-y_\eps^2)$. Suppose
that $\|z_{2,\eps}^+\|\not\to0$ and $z_{2,\eps}^+|_{H_\eps}\not\to0$ in
$L^\theta$, then we can argue again as above. And it is all clear that there exists $l\in\N$ such
that, after repeating the above argument for $l$ times,
we can get that $z_{l,\eps}^+|_{H_\eps}\to0$ in the $L^\theta$-norm.

\medskip
\textbf{Step 4.} $\|z_{l,\eps}^+\|\to0$ as $\eps\to0$.

To the contrary let us assume that $\|z_{l,\eps}^+\|\not\to0$.
Since $Q_\eps(\cdot)$ is bounded,
it follows from a standard argument that
\[
\aligned
&\real\int_{\R^3}\la_\eps\cdot Q_\eps(x)|u_\eps^+|^{\theta-2}u_\eps^+\cdot\ov{\va^+} dx\\
&\qquad =\sum_{j=1}^l\bar\la\cdot y_j \real\int_{\R^3}|u_j^+(\cdot-y_\eps^j)|^{\theta-2}
u_j^+(\cdot-y_\eps^j) \cdot\ov{\va^+}dx \\
&\qquad \qquad +\real\int_{\R^3}\la_\eps\cdot Q_\eps(x)
|z_{l,\eps}^+|^{\theta-2} z_{l,\eps}^+\cdot\ov{\va^+}dx+o_\eps(1)\|\va\|,
\endaligned
\]
uniformly for $\va\in E$ as $\eps\to0$ and, particularly,
\begin{\equ}\label{E0}
\int_{\R^3}\la_\eps\cdot Q_\eps(x)|u_\eps^+|^\theta dx=
\sum_{j=1}^l\bar\la\cdot y_j \int_{\R^3}|u_j^+|^\theta dx
+\int_{\R^3}\la_\eps\cdot Q_\eps(x)|z_{l,\eps}^+|^\theta dx+o_\eps(1).
\end{\equ}
Since $\cb_\eps(u_\eps^+)=0$, together with Proposition \ref{h-vr converges},
we can deduce from \eqref{E0} that
\begin{\equ}\label{E1}
\aligned
o_\eps(1)&=\|z_{l,\eps}^++h_\eps(z_{l,\eps}^+)\|^2+\real\int_{\R^3}
V_\eps(x)\big(z_{l,\eps}^++h_\eps(z_{l,\eps}^+)\big)\cdot
\ov{\big(z_{l,\eps}^+-h_\eps(z_{l,\eps}^+)\big)}dx \\
&\qquad -\real\int_{\R^3}g_\eps\big(x,|z_{l,\eps}^++h_\eps(z_{l,\eps}^+)|\big)
\big(z_{l,\eps}^++h_\eps(z_{l,\eps}^+)\big)\cdot
\ov{\big(z_{l,\eps}^+-h_\eps(z_{l,\eps}^+)\big)}dx \\
&\qquad -\int_{\R^3}\la_\eps\cdot Q_\eps(x)|z_{l,\eps}^+|^\theta dx.
\endaligned
\end{\equ}
Therefore, by $(f2)$ and Proposition \ref{lpdec}, we obtain
\[
\|z_{l,\eps}^++h_\eps(z_{l,\eps}^+)\|^2\leq C |z_{l,\eps}^++h_\eps(z_{l,\eps}^+)|_3^3
\leq C' \|z_{l,\eps}^++h_\eps(z_{l,\eps}^+)\|^3,
\]
for some $C,C'>0$ which implies there exists
$c>0$ such that $\|z_{l,\eps}^++h_\eps(z_{l,\eps}^+)\|\geq c$.
In what follows, for simplicity of notation, we denote
$\bar z_{l,\eps}=z_{l,\eps}^++h_\eps(z_{l,\eps}^+)$.
By \eqref{E1} again, and a similar argument as in the proof of Lemma \ref{boundedness}, we get that
\[
\aligned
\|\bar z_{l,\eps}\|^2&\leq C_\theta \Big(
\int_{\R^3}\chi_\eps(x)\big( f(|\bar z_{l,\eps}|)|\bar z_{l,\eps}|^2
-2F(|\bar z_{l,\eps}|) \big)dx \Big)^{\frac23}|\bar z_{l,\eps}^+-\bar z_{l,\eps}^-|_3\\
&\qquad +
C \int_{\R^3}\la_\eps\cdot Q_\eps(x)|\bar z_{l,\eps}^+|^\theta dx
+o_\eps(1) \\
&\leq C_\theta' \Big( 2\Phi^{red}_\eps(z_{l,\eps}^+) -D\Phi^{red}_\eps(z_{l,\eps}^+)[z_{l,\eps}^+]\Big)^{\frac23}\|\bar z_{l,\eps}\|
+
C \int_{\R^3}\la_\eps\cdot Q_\eps(x)|\bar z_{l,\eps}^+|^\theta dx
+o_\eps(1)
\endaligned
\]
for some $C,C_\theta,C_\theta'>0$. Remark that $\bar z_{l,\eps}^+
=z_{l,\eps}^+\to0$ in $L^\theta(H_\eps)$.
Then, it follows from $\|\bar z_{l,\eps}\|\geq c$
and $(f2)$
that there exists constant $c'>0$ (independent of $R_1$) such that
\begin{\equ}\label{E2}
\liminf_{\eps\to0} \Big(
\Phi^{red}_\eps(z_{l,\eps}^+) -\frac12 D\Phi^{red}_\eps(z_{l,\eps}^+)[z_{l,\eps}^+] \Big)
\geq c'.
\end{\equ}
Next, let us distinguish two possible situations.

\medskip
\textit{$\bullet$ Case 1.} $\bar\la\cdot y_j \geq 0$ for all $j=1,\dots,l$.

Since $\cb_\eps(u_\eps^+)=0$, we have that
\[
0=\int_{H_\eps}\la_\eps\cdot Q_\eps(x)|u_\eps^+|^\theta dx
+\int_{H_\eps^c}\la_\eps\cdot Q_\eps(x)|u_\eps^+|^\theta dx.
\]
By virtue of $z_{l,\eps}^+|_{H_\eps}\to0$ in the $L^\theta$-norm and
$\bar\la\cdot y_j\geq0$ for all $j=1,\dots,l$, we know that
\[
\int_{H_\eps}\la_\eps\cdot Q_\eps(x)|u_\eps^+|^\theta dx\to
\sum_{j=1}^l \bar\la\cdot y_j \int_{\R^3}|u_j^+|^\theta dx \geq0,
\]
whereas $\la_\eps\cdot Q_\eps(x)\geq \frac12\de>0$ in $H_\eps^c$.
Thus we have
\[
\bar\la\cdot y_j=0, \quad \text{for all } j=1,\dots,l,
\]
and so
\[
\frac\de2\int_{H_\eps^c}|u_\eps^+|^\theta dx\leq
\int_{H_\eps^c}\la_\eps\cdot Q_\eps(x)|u_\eps^+|^\theta dx\to0,\quad
\text{as } \eps\to0.
\]
We also deduce from \eqref{E0} that
\[
\int_{H_\eps^c}\la_\eps\cdot Q_\eps(x)|z_{l,\eps}^+|^\theta dx \to0,\quad
\text{as } \eps\to0.
\]
With all those information in hand, by Proposition \ref{h-vr converges},
we can estimate the energy $\Phi^{red}_\eps(u_\eps^+)$ as
\[
\Phi^{red}_\eps(u_\eps^+)=\Phi^{red}_\eps(z_{l,\eps}^+)+\sum_{j=1}^l \mst^{red}_{y_j}(u_j^+)+o_\eps(1).
\]
Moreover, we have that
\[
D\Phi^{red}_\eps(u_\eps^+)[u_\eps^+]=D\Phi^{red}_\eps(z_{l,\eps}^+)[z_{l,\eps}^+]+\sum_{j=1}^l D\mst^{red}_{y_j}(u_j^+)[u_j^+]+o_\eps(1).
\]
Since $\bar\la\cdot y_j = 0$ for all $j=1,\dots,l$, we have $u_j^+$'s
are critical points of $\mst^{red}_{y_j}$.
And so, we get the estimate
\[
\liminf_{\eps\to0} b_\eps=\liminf_{\eps\to0} \Phi^{red}_\eps(u_\eps^+)
=\liminf_{\eps\to0} \Big(\Phi^{red}_\eps(z_{l,\eps}^+) -\frac12D\Phi^{red}_\eps(z_{l,\eps}^+)[z_{l,\eps}^+] \Big)
+\sum_{j=1}^l \mst^{red}_{y_j}(u_j^+).
\]
Recall that we have denoted
$\bar z_{l,\eps}=z_{l,\eps}^++h_\eps(z_{l,\eps}^+)$, hence, by \eqref{E2},
we have
\[
\liminf_{\eps\to0} b_\eps\geq c'+\sum_{j=1}^l \mst^{red}_{y_j}(u_j^+).
\]

Since, by Lemma \ref{mod}, we have that $\mst^{red}_{y_j}(w)\geq J^{red}_{V(y_j)}(w)$, $\forall w\in E^+$,
for all $j=1,\dots,l$, we can infer that
\[
\mst^{red}_{y_j}(u_j^+)\geq \ga(J_{V(y_j)}), \quad j=1,\dots,l.
\]
And therefore
\[
\liminf_{\eps\to0} b_\eps\geq l\cdot\min_{j=1,\dots,l}\ga(J_{V(y_j)}) +c'.
\]
Remark that $y_j\in B_2=B(0,2R_1)$, by shrinking $R_1$ if necessary, we can conclude from the continuity of the map $\nu\mapsto \ga(J_\nu)$ that
\[
\big|\ga(J_{V(y_j)})-\ga(J_{\nu_0})\big|<\frac12 c' \quad \text{for all } j=1,\dots,l,
\]
and then we obtain
\[
\liminf_{\eps\to0} b_\eps\geq \ga(J_{\nu_0})+\frac12 c'>\ga(J_{\nu_0})
\]
which contradicts to Proposition \ref{gavr up estimate} and Lemma \ref{bvr is well defined}.

\medskip
\textit{$\bullet$ Case 2.} There exists $\{j_1,\dots,j_k\}\subset\{1,\dots,l\}$ such that
$\bar\la\cdot y_{j_m}<0$ for $m=1,\dots,k$.

In this case, similar as that in Case 1, we can apply Proposition \ref{h-vr converges}
to obtain
\[
\Phi^{red}_\eps(u_\eps^+)=\Big( \Phi^{red}_\eps(z_{l,\eps}^+) -\frac12D\Phi^{red}_\eps(z_{l,\eps}^+)[z_{l,\eps}^+] \Big)
+\sum_{j=1}^l \Big( \mst^{red}_{y_j}(u_j^+)-\frac12 D\mst^{red}_{y_j}(u_j^+)[u_j^+] \Big) +o_\eps(1).
\]
By the definition of $G(x,s)$, we have $\mst^{red}_{y_j}(u_j^+)-\frac12 D\mst^{red}_{y_j}(u_j^+)[u_j^+]\geq0$
for all $j=1,\dots,l$. Then, we conclude that
\begin{eqnarray}\label{E3}
\Phi^{red}_\eps(u_\eps^+)&\geq&\Big( \Phi^{red}_\eps(z_{l,\eps}^+) -\frac12D\Phi^{red}_\eps(z_{l,\eps}^+)[z_{l,\eps}^+] \Big)   \nonumber \\
& &+\sum_{m=1}^k\Big( \mst^{red}_{y_{j_m}}(u_{j_m}^+)-
\frac12 D\mst^{red}_{y_{j_m}}(u_{j_m}^+)[u_{j_m}^+] \Big) +o_\eps(1).
\end{eqnarray}

To evaluate the above inequality,
let us denote $\cm^+(\mst_{y_{j_m}})=\big\{ w\in E^+\setminus\{0\}:\, D\mst^{red}_{y_{j_m}}(w)[w]=0 \big\}$,
for $m=1,\dots,k$, and $t_m>0$ be the unique point such that
$t_m u_{j_m}^+\in\cm^+(\mst_{y_{j_m}})$. Observe that
$\bar\la\cdot y_{j_m}<0$, by Step 2 and Step 3, we get that
\[
D\mst^{red}_{y_{j_m}}(u_{j_m}^+)[u_{j_m}^+]-\bar\la\cdot y_{j_m}\int_{\R^3}|u_{j_m}^+|^\theta dx =0,
\]
and hence we have $t_m<1$. Observe that, by
applying Lemma \ref{key lemma}, we have
\[
\mst^{red}_{y_{j_m}}(u_{j_m}^+)-\frac12 D\mst^{red}_{y_{j_m}}(u_{j_m}^+)[u_{j_m}^+]
>\mst^{red}_{y_{j_m}}(t_mu_{j_m}^+)-\frac12 D\mst^{red}_{y_{j_m}}(t_mu_{j_m}^+)[t_mu_{j_m}^+].
\]
Then, it follows from
$t_mu_{j_m}^+\in\cm^+(\mst_{y_{j_m}})$ that
\[
\mst^{red}_{y_{j_m}}(u_{j_m}^+)-\frac12 D\mst^{red}_{y_{j_m}}(u_{j_m}^+)[u_{j_m}^+]
>\ga(J_{V(y_{j_m})}), \quad \text{for all } m=1,\dots,k.
\]
Finally, by \eqref{E2} and \eqref{E3}, we obtain the inequality
\[
\liminf_{\eps\to0}b_\eps=\liminf_{\eps\to0}\Phi^{red}_\eps(u_\eps^+)
\geq k\cdot\min_{m=1,\dots,k}\ga(J_{V(y_{j_m})})+c'.
\]
And therefore, as in Case 1, we conclude easily a contradiction.

\medskip
\textbf{Step 5.} Complete description of $u_\eps$ as $\eps\to0$.

As was argued in the previous steps, we know that there exists $l\in\N$
and, for any $j=1,\dots,l$, $y_\eps^j\in H_\eps$, $y_j\in B_2$ and
$u_j\in E\setminus\{0\}$ such that
\[
\aligned
&|y_\eps^j-y_\eps^{j'}|\to\infty, \quad \text{if } j\neq j',\\
&\eps y_\eps^j\to y_j,  \\
&\Big\|u_\eps^+-\sum_{j=1}^l u_j^+(\cdot-y_\eps^j)\Big\|\to0,\\
&D\mst^{red}_{y_j}(u_j^+)-\bar\la\cdot y_j\big(|u_j^+|^{\theta-2}u_j^+\big)^+=0.
\endaligned
\]
Observe that there strictly holds
\[
\mst^{red}_{y_{j}}(u_{j}^+)-\frac12 D\mst^{red}_{y_{j}}(u_{j}^+)[u_{j}^+]
>\ga(J_{V(y_{j})})
\]
provided that $\bar\la\cdot y_j<0$. Moreover, Lemma \ref{critical attained}
implies that $\ga(J_{V(y_j)})\geq \ga(J_{\nu_0})-\si$ for any $y_j\in B_2$, where
$\si>0$ can be taken arbitrary small by appropriately shrinking $R_1$. Therefore,
by Proposition \ref{gavr up estimate} and Lemma \ref{bvr is well defined},
we conclude that $l=1$ and $\bar\la\cdot y_1= 0$.
And thus we have
$\|u_\eps-u_1(\cdot-y_\eps^1)\|\to0$ as $\eps\to0$
 which complete the proof.
\end{proof}

\begin{Cor}
$y_1\in X^\bot$ and $\liminf_{\eps\to0}b_\eps\geq \ga(J_{\nu_0})$.
\end{Cor}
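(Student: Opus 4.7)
The plan is to exploit Proposition~\ref{u-vr converges} together with the barycenter constraint $\cb_\eps(u_\eps^+)=0$ and the monotonicity established in Lemma~\ref{critical attained}. Recall that along the working subsequence we already have $\eps y_\eps^1\to y_1\in B_2$, $\bar\la\cdot y_1=0$, $\|u_\eps-u_1(\cdot-y_\eps^1)\|\to0$ in $E$, and $u_1\ne 0$ solves the unperturbed limit equation at $y_1$; Lemma~\ref{STEP1} moreover gives $b_\eps=\Phi_\eps^{red}(u_\eps^+)$ (the Lagrange-multiplier term lives in $E^+$, so $u_\eps^-=h_\eps(u_\eps^+)$).

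The geometric step is to upgrade $\bar\la\cdot y_1=0$ to the stronger $y_1\in X^\bot$. I would rewrite the constraint $\cb_\eps(u_\eps^+)=0$ as $\int_{\R^3}Q_\eps(x)|u_\eps^+(x)|^\theta dx=0$, translate $x\mapsto x+y_\eps^1$, and pass to the limit. The strong convergence $u_\eps^+(\cdot+y_\eps^1)\to u_1^+$ in $L^\theta$, the pointwise convergence $Q_\eps(x+y_\eps^1)=P_X\big(\zeta(\eps x+\eps y_\eps^1)\big)\to P_X(\zeta(y_1))=P_X(y_1)$ (the cut-off $\zeta$ acts as the identity near $y_1$ since $|y_1|<2R_1<3R_1$), and the uniform bound $|Q_\eps|\le 3R_1$ combine via dominated convergence to yield $P_X(y_1)\int|u_1^+|^\theta dx=0$. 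The reduction scheme of Theorem~\ref{thm:red-couple} forces $u_1^+\ne 0$ (otherwise $u_1=h_{y_1}(0)=0$), so $P_X(y_1)=0$, i.e.\ $y_1\in X^\bot$.

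With $y_1\in X^\bot$ in hand the second half is a bookkeeping chain. Shrinking $R_1$ once and for all, the strict local minimality of $V|_{X^\bot}$ at $0$ guaranteed by $(V1)$ or $(V2)$ gives $V(y_1)\geq V(0)=\nu_0$. Since $\bar\la\cdot y_1=0$, the equation for $u_1$ becomes the unperturbed one, so $u_1^+$ is a genuine critical point of $\mst_{y_1}^{red}$ and $\mst_{y_1}^{red}(u_1^+)\geq \ga(\mst_{y_1})$. The pointwise inequality $G(y_1,s)\le F(s)$ upgrades to $\mst_{y_1}\geq I_{V(y_1)}$ and hence to $\mst_{y_1}^{red}\geq I_{V(y_1)}^{red}$, so $\ga(\mst_{y_1})\geq\ga(I_{V(y_1)})$. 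The monotonicity in Lemma~\ref{critical attained} then gives $\ga(I_{V(y_1)})\geq\ga(I_{\nu_0})=\ga(J_{\nu_0})$. Finally, translating by $y_\eps^1$ and applying Proposition~\ref{h-vr converges}(3) with $w_\eps=u_\eps^+(\cdot+y_\eps^1)\to u_1^+$ strongly produces the splitting $b_\eps=\Phi_\eps^{red}(u_\eps^+)=\mst_{y_1}^{red}(u_1^+)+o_\eps(1)$, and chaining the inequalities delivers $\liminf_{\eps\to0}b_\eps\geq \ga(J_{\nu_0})$.

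The only genuine obstacle is the limit passage in the barycenter constraint; once $y_1\in X^\bot$ is secured everything else is routine. That step is where the paper's unusual full-space barycenter $\cb_\eps$ pays off: the strong $E$-convergence from Proposition~\ref{u-vr converges} makes the passage transparent, whereas a merely truncated barycenter would require additional uniform decay of $u_1^+$ at infinity to control the tail.
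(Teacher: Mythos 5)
Your proof is correct and follows the same route as the paper: pass to the limit in the barycenter constraint $\cb_\eps(u_\eps^+)=0$ after translating by $y_\eps^1$, using the strong $L^\theta$ convergence from Proposition~\ref{u-vr converges} together with the boundedness of $Q_\eps$ to extract $P_X(y_1)\int|u_1^+|^\theta dx=0$ and hence $y_1\in X^\bot$, then derive the energy lower bound by comparing $\mst_{y_1}$ with $I_{V(y_1)}$ and invoking the monotonicity of $\nu\mapsto\ga(I_\nu)$ from Lemma~\ref{critical attained}. The paper compresses the second half into a single ``we soon conclude''; your explicit chain (criticality of $u_1^+$ for $\mst_{y_1}^{red}$, the pointwise inequality $G\le F$, monotonicity, and the splitting from Proposition~\ref{h-vr converges}(3)) is a faithful unpacking of the argument already sketched in Steps 4--5 of the proof of Proposition~\ref{u-vr converges}.
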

\begin{proof}
Since $\cb_\eps(u_\eps^+)=0$, by Proposition \ref{u-vr converges}, we get
\[
\aligned
0&=\int_{\R^3}Q_\eps(x)|u_\eps^+(x)|^\theta dx\\
&=\int_{\R^3}P_X(\zeta(\eps x +\eps y_\eps^1))
|u_\eps^+(x+y_\eps^1)|^\theta dx
 \to P_X(y_1)\int_{\R^3}|u_1^+|^\theta dx.
\endaligned
\]
Then $y_1\in X^\bot$, and we soon conclude
\[
\liminf_{\eps\to0}b_\eps\geq \ga(J_{V(y_1)})\geq\ga(J_{\nu_0}).
\]
\end{proof}

This finishes the proof of Proposition~\ref{key prop}.

\section{Profile of the solutions}\label{sec:Profile}

In this section, let us study the asymptotic behavior of the solution
$z_\eps$ obtained in
Theorem~\ref{existence part}. We will show that $z_\eps$ is actually a solution
of the original problem \eqref{Dirac0}, and consequently, we can complete the proof
of Theorem \ref{main result}.

Let us recall that $z_\eps$ is the critical point of $\Phi_\eps$ at level $\ga_\eps$, that is,
\begin{\equ}\label{profile1}
-i\al\cdot\nabla z_\eps + a\be z_\eps + V_\eps(x)z_\eps = g_\eps(x,|z_\eps|)z_\eps.
\end{\equ}
Moreover, Proposition \ref{gavr limit} implies that $\Phi_\eps(z_\eps)\to \ga(J_{\nu_0})$ as $\eps\to0$.

In what follows, we will give the asymptotic behavior of $z_\eps$ as $\eps\to0$.

\begin{Prop}\label{asymptotic prop}
Given a sequence $\eps_j\to0$, up to a subsequence, there exists $\{y_{\eps_j}\}\subset\R^3$
such that
\[
\eps_j y_{\eps_j}\to0, \quad \|z_{\eps_j}-Z(\cdot-y_{\eps_j})\|\to0,
\]
where $Z\in\msl_{\nu_0}$ $($see \eqref{notations}$)$.
\end{Prop}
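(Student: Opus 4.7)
The plan is to extract a concentration profile of $z_{\eps_j}$ by concentration-compactness and then pin the concentration point to $0$ by combining the energy identity $\Phi_{\eps_j}(z_{\eps_j})=\ga_{\eps_j}\to\ga(J_{\nu_0})$ with the structure of $V$ near the origin.

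\textbf{Boundedness, non-vanishing and boundedness of $\eps_j y_{\eps_j}$.} By Lemma~\ref{boundedness}, $\{z_{\eps_j}\}$ is bounded in $E$ uniformly in $j$. Since $\Phi_{\eps_j}(z_{\eps_j})\to\ga(J_{\nu_0})>0$ (Proposition~\ref{gavr limit}), the sequence cannot be vanishing in $L^q$ for some $q\in(2,3)$: otherwise $D\Phi_{\eps_j}(z_{\eps_j})=0$ combined with the quadratic estimate of Lemma~\ref{boundedness} would force $\|z_{\eps_j}\|\to 0$, a contradiction. Concentration-compactness then yields $\{y_{\eps_j}\}\subset\R^3$ and $Z\in E\setminus\{0\}$ with $w_j:=z_{\eps_j}(\cdot+y_{\eps_j})\weakto Z$. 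If, along a subsequence, $\dist(\eps_j y_{\eps_j},B_2)\not\to 0$, then $\chi(\eps_j x+\eps_j y_{\eps_j})\to 0$ locally and the limit equation reduces to $-i\al\cdot\nabla Z+a\be Z+\nu_\infty Z=\tilde f(|Z|)Z$ for some $|\nu_\infty|\le|V|_\infty$; pairing with $Z^+-Z^-$, using $\tilde f(s)\le\de_0\le(a-|V|_\infty)/4$ and \eqref{l2ineq}, yields the contradiction $(1-(|V|_\infty+\de_0)/a)\|Z\|^2\le 0$. Hence $\eps_j y_{\eps_j}\to y_1\in\ov B_2$ along a subsequence.

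\textbf{Profile decomposition and energy identity.} I now run the iterative extraction from the proof of Proposition~\ref{u-vr converges}, which is simpler here since $z_{\eps_j}$ is an unconstrained critical point (no Lagrange multiplier $\la_\eps$). Together with the splitting lemma Proposition~\ref{h-vr converges}(3), this yields
\[
z_{\eps_j}^+=\sum_{k=1}^l u_k^+(\cdot-y_{\eps_j}^k)+r_{\eps_j},\qquad \|r_{\eps_j}\|\to 0,
\]
where each $u_k\in E\setminus\{0\}$ is a critical point of $\Phi_{y_k}$ with $y_k=\lim_j\eps_j y_{\eps_j}^k\in\ov B_2$, together with
\[
\ga(J_{\nu_0})=\sum_{k=1}^l \Phi_{y_k}^{red}(u_k^+).
\]
Shrinking $R_1$ so that, by the continuity of $\nu\mapsto\ga(I_\nu)$ of Lemma~\ref{critical attained}, $\ga(I_{V(y)})>\ga(J_{\nu_0})/2$ for every $y\in\ov B_2$, the estimate $\Phi_{y_k}^{red}(u_k^+)\ge\ga(\Phi_{y_k})\ge\ga(I_{V(y_k)})$ forces $l=1$. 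Strong convergence $z_{\eps_j}(\cdot+y_{\eps_j}^1)\to u_1$ in $E$ then follows from $\|z_{\eps_j}\|\to\|u_1\|$, and $\Phi_{y_1}(u_1)=\ga(J_{\nu_0})\ge\ga(I_{V(y_1)})$ together with the strict monotonicity of Lemma~\ref{critical attained} gives $V(y_1)\le V(0)$.

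\textbf{Pinning $y_1$ to the origin.} The main obstacle is upgrading $V(y_1)\le V(0)$ to $y_1=0$. For $(V1)$ with $0$ a local minimum the situation is immediate: $V|_{\ov B_2}\ge V(0)$ after shrinking $R_1$, so $V(y_1)=V(0)$ and isolation forces $y_1=0$. For $(V1)$ with $0$ a local maximum or $(V2)$ saddle, the one-sided inequality $V(y_1)\le V(0)$ is compatible with $y_1\ne 0$, and finer information is needed. Testing the equation for $z_{\eps_j}$ with $\eps_j\pa_\xi z_{\eps_j}$ for directions $\xi\in\R^3$ (as in the boundedness of $\la_\eps$ in Section~\ref{sec:Proof of key prop}) and passing to the limit yields a Pohozaev-type identity showing that $\nabla V(y_1)\in\R y_1$ in general, and $\nabla V(y_1)=0$ when $y_1\in B_1$; combined with the transversality condition \eqref{pa V} on $\pa B_1$ and the isolation of $0$ as a critical point of $V$ built into $(V1)$ and $(V2)$, this forces $y_1=0$. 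Once $y_1=0\in B_1$, $g(0,\cdot)=f$, so $u_1$ solves $-i\al\cdot\nabla u_1+a\be u_1+V(0)u_1=f(|u_1|)u_1$ with $I_{\nu_0}(u_1)=\ga(J_{\nu_0})=\ga(I_{\nu_0})$, giving $u_1\in\msl_{\nu_0}$. Setting $Z:=u_1$ and renaming $y_{\eps_j}:=y_{\eps_j}^1$ completes the proof.
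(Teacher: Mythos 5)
Your overall strategy --- extract a profile by concentration-compactness, reduce to a single bump, and pin the concentration point with a Pohozaev-type identity --- matches the paper's (Section~\ref{sec:Profile}). The boundedness, non-vanishing, $l=1$ reduction and the observation $V(y_1)\le V(0)$ are all sound, and the $(V1)$--local-minimum case is disposed of correctly. But the pinning step for a local maximum or a saddle has a genuine gap. In the open annulus $B_2\setminus\ov{B_1}$, the identity obtained from testing with $\eps\pa_\xi z_\eps$ and passing to the limit gives $\nabla V(y_1)\in\R y_1$, but this alone is \emph{not} a contradiction: the transversality hypothesis \eqref{pa V} only concerns points with $|x|=R_1$ \emph{and} $V(x)=V(0)$, so it says nothing about points strictly between the two spheres, where a radial gradient is perfectly consistent with $(V1)$/$(V2)$. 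What actually rules this region out in the paper is a quantitative bound: fixing $\xi=y_1/|y_1|$ (so $\pa_\xi\chi(y_1)=-1/R_1$), the limit identity reads $\tfrac12\pa_\xi V(y_1)\,|z_1|_2^2=-\tfrac1{R_1}\int\big(F(|z_1|)-\tilde F(|z_1|)\big)dx$, and one shows $\int(F-\tilde F)\ge c'|z_1|_2^2$ with $c'>0$ independent of $R_1$ and $\de_0$; this forces $|\nabla V(y_1)|\ge 2c'/R_1$, incompatible with $V\in\cc^{0,1}$ once $R_1$ is shrunk. Without this magnitude estimate and the corresponding a priori choice of small $R_1$, your argument does not close the annulus case.

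A second, smaller issue: when $y_1\in\pa B_1$, $\chi$ is not $\cc^1$, so ``passing to the limit'' in the Pohozaev identity is not automatic there. The paper first establishes $V(y_1)=\nu_0$ (so that \eqref{pa V} supplies a tangent direction $\tau$ with $\pa_\tau V(y_1)\ne0$), and then carries out a dedicated estimate --- splitting the integral involving $\pa_\tau\chi(\eps x)$ over $B(0,r/\sqrt\eps)$ and its complement and exploiting the strong convergence of $z_\eps(\cdot+\bar y_\eps^1)$ together with dominated convergence --- to show that this contribution still vanishes in the limit. Your sketch glides past exactly these two places where the real work lies.
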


\begin{proof}
For the sake of clarity, let us write $\eps=\eps_j$.
Our argument here has been used already in the previous section, so we will be
sketchy. First of all, analogous to Proposition \ref{u-vr converges},
we can conclude that: there exist
 $\bar y_\eps^1\in \R^3$, $\bar y_1\in B_2$ and $z_1\in E\setminus\{0\}$
with
\[
-i\al\cdot\nabla z_1+a\be z_1 + V(\bar y_1)z_1 = g(\bar y_1,|z_1|)z_1,
\]
such that
\[
\eps \bar y_\eps^1\to \bar y_1, \quad
\|z_\eps-z_1(\cdot-y_\eps^1)\|\to0 \quad \text{as } \eps\to0.
\]
So, the only thing that need to be proved is that $\bar y_1=0$.

By regularity arguments, $\{z_\eps\}\subset \cap_{q\geq2}W^{1,q}(\R^3,\C^4)$.
For arbitrary $\xi\in\R^3$,  multiplying \eqref{profile1} by $\pa_\xi z_\eps$ and
integrating, we get
\begin{\equ}\label{XXX}
-\frac\eps2\int_{\R^3} \pa_\xi V(\eps x) |z_\eps|^2dx+ \eps\int_{\R^3}
\big( F(|z_\eps|)-\tilde F(|z_\eps|) \big)
\pa_\xi \chi(\eps x)dx=0.
\end{\equ}
And if $\chi$ is $C^1$ around $\bar y_1$, we shall divide by $\eps$ and pass to the
limit to obtain
\begin{\equ}\label{chi c1}
-\frac{\pa_\xi V(\bar y_1)}2\int_{\R^3} |z_1|^2dx+ \pa_\xi \chi(\bar y_1)\int_{\R^3}
\big( F(|z_1|)-\tilde F(|z_1|) \big) dx=0.
\end{\equ}
At this point, similar as that in \cite{DPR}, we consider three different cases.

\medskip
\textit{$\bullet$ Case 1.} $\bar y_1\in B_1$.

By \eqref{chi c1}, we get that $\pa_\xi V(\bar y_1)=0$. Since $\xi\in\R^3$ is arbitrary,
$\bar y_1$ is a critical point of $V$ in $B_1$, and therefore $\bar y_1=0$.

\medskip
\textit{$\bullet$ Case 2.} $\bar y_1\in B_2\setminus \ov{B_1}$.

In this case, let us first fix $\xi=\frac1{|\bar y_1|}\bar y_1$.
By the definition of $\chi$ (see \eqref{chi}), we have that
$\pa_\xi \chi(\bar y_1)=-1/R_1$.

Now, using $(f3)$ and the fact $\tilde F(s)\leq \frac{\de_0}2 s^2$, it follow easily that there exists
a constant $c>0$ (which is independent of the choice of $\de_0$) such that
\[
\int_{\R^3} F(|z_1|)dx\geq c,
\]
and so by the boundedness of $z_1\in E$ (see an argument of Lemma \ref{boundedness}) we get
\[
c'\int_{\R^3}|z_1|^2dx \leq \int_{\R^3} \big( F(|z_1|)-\tilde F(|z_1|)\big)dx.
\]
Thus, it suffices to take $R_1$ smaller, if necessary,
to get a contradiction with \eqref{chi c1}.

\medskip
\textit{$\bullet$ Case 3.} $\bar y_1\in \pa B_2$.

In this case, observe that $\chi(\bar y_1)=1$, and so $z_1$ is a solution of
\[
-i\al\cdot\nabla z_1+a\be z_1 + V(\bar y_1)z_1 = f(|z_1|)z_1.
\]
Since $J^{red}_{V(\bar y_1)}(z_1^+)=J_{V(\bar y_1)}(z_1)=\ga(J_{\nu_0})$,
Lemma \ref{critical attained} implies that $V(\bar y_1)=\nu_0$. Then,
by \eqref{pa V}, there exists $\tau\in\R^3$ tangent to $\pa B_1$ at
$\bar y_1$ such that $\pa_\tau V(\bar y_1)\neq0$.

Remark that $\chi$ is not $C^1$ on $\pa B_1$,  let us go back to
consider \eqref{XXX}.
Take $\xi=\tau$ and $r<R_1$, we can estimate by the dominated convergence theorem
and the strong convergence of $z_\eps(\cdot+\bar y_\eps^1)$ that
\[
\aligned
&\Big| \int_{\R^3}\pa_\tau \chi(\eps x)\big[ F(|z_\eps|)-\tilde F(|z_\eps|)
 \big] dx\Big| \\
 \leq&\, \frac1{R_1}\int_{B(0,r/\sqrt{\eps})}\Big[ \frac{|x\cdot\tau|}{|x+\bar y_\eps^1|}
 + \frac{|\bar y_\eps^1\cdot\tau|}{|x+\bar y_\eps^1|} \Big]\big[ F(|z_\eps(x+\bar y_\eps^1)|)
 -\tilde F(|z_\eps(x+\bar y_\eps^1)|) \big] dx \\
 &\, +\frac1{R_1}\int_{\R^3\setminus B(0,r/\sqrt{\eps})}
 \frac{|(x+\bar y_\eps^1)\cdot\tau|}{|x+\bar y_\eps^1|}
\big[ F(|z_\eps(x+\bar y_\eps^1)|)
 -\tilde F(|z_\eps(x+\bar y_\eps^1)|) \big] dx \to0.
\endaligned
\]
Dividing by $\eps$ and passing to the limit in \eqref{XXX}, we can conclude
\[
\frac12\pa_\tau V(\bar y_1)\int_{\R^3} |z_1|^2dx=0,
\]
a contradiction.
\end{proof}

\begin{proof}[Complete proof of Theorem \ref{main result}]
It suffices to show that $|z_\eps(x)|\to0$ uniformly in $\R^3\setminus B_1^\eps$ as $\eps\to0$.
In fact, from the regularity argument in \cite[Lemma 3.19]{Ding-Wei-Xu-JMP},
we have that there exists $C>0$ (independent of $\eps$) such that
$|z_\eps|_{\infty}\leq C$. Then we can use elliptic esitmate
to get
\[
|z_\eps(x)|\leq C_0\int_{B(x,1)}|z_\eps(y)|dy
\]
with $C_0>0$ independent of both $\eps$ and $x\in\R^3$. And thus, by
Proposition \ref{asymptotic prop}, we have that
for any $x\in\R^3\setminus B_1^\eps$,
\[
\aligned
|z_\eps(x)|&\leq C_0\bigg(\int_{B(x,1)}|z_\eps|^2\bigg)^{1/2}\\
&\leq C_0\bigg(\int_{\R^3}\big|z_\eps-Z(\cdot - y_\eps)\big|^2\bigg)^{1/2}
+C_0\bigg(\int_{B(x,1)}\big|Z(\cdot - y_\eps)\big|^2\bigg)^{1/2} \to0,
\endaligned
\]
as $\eps\to0$. Finally, by the decay estimates obtained in \cite[Lemma 4.2]{Ding-Xu-ARMA},
it is standard to prove that there exists $C,c>0$ independent of $\eps$ such that
\[
|z_\eps(x)|\leq C \exp\big( -c|x-y_\eps| \big).
\]
This concludes the whole proof.
\end{proof}

\appendix
\section{Appendix}

\noindent
Here we sketch the proof of Proposition \ref{h-vr converges}. Firstly for later use let us point out that,
under the assumptions of Proposition \ref{h-vr converges},
$V(\eps\cdot+y_\eps)\to V(y)$ in $L_{loc}^\infty(\R^N)$ as $\eps\to0$. Now,
denote $V_\eps^0(x)=V(\eps x+y_\eps)-V(y)$, we
soon have
\begin{\equ}\label{R3}
\Phi_{\eps,y_\eps}(u)=\mst_y(u)+\frac12\int_{\R^3} V^0_\eps(x)|u|^2 dx
-\int_{\R^3} \big( G(\eps x+y_\eps,|u|)-G(y,|u|) \big) dx
\end{\equ}
for all $u\in E$. We also remark that,
for arbitrary $w\in E^+$ and $v\in E^-$,
by setting $\tilde v = v - h_{\eps,y_\eps}(w)$ and $\ell(t)=\Phi_{\eps,y_\eps}\big(w+h_{\eps,y_\eps}(w)+t\tilde v\big)$,
one has $\ell(1)=\Phi_{\eps,y_\eps}(w+v)$, $\ell(0)=\Phi_{\eps,y_\eps}\big(w+h_{\eps,y_\eps}(w)\big)$ and
$\ell'(0)=0$. Hence we deduce
$\ell(1)-\ell(0)=\int_0^1(1-s)\ell''(s)ds$. And
consequently, we have
\begin{\equ}\label{identity1}
\aligned
&\int_0^1(1-s)\Psi_{\eps,y_\eps}''\big(w+h_{\eps,y_\eps}(w)+s\tilde v\big)[\tilde v,\tilde v] \,ds\\
&+\frac12\|\tilde v\|^2+\frac12\int_{\R^N} V(\eps x+y_\eps)|\tilde v|^2dx=
\Phi_{\eps,y_\eps}\big(w+h_{\eps,y_\eps}(w)\big)-\Phi_{\eps,y_\eps}(z+v) ,
\endaligned
\end{\equ}
where, for notation convenience, we denote
$\Psi_{\eps,y}(u)\equiv\int_{\R^3}G(\eps x+y,|u|)dx$ for $u\in E$ and $y\in\R^3$.

Observe that assertion $(1)$ follows directly from \cite[Lemma 4.3]{Ding-Xu-Trans} and
that assertion $(3)$ can be viewed as an immediate corollary of assertion $(2)$. Hence,
to complete the proof, it suffices to show that, as $\eps\to0$,
\begin{\equ}\label{aim}
\left\{\aligned
&y_\eps\to y \text{ in } \R^3 \\
&w_\eps\weakto w \text{ in } E^+
\endaligned \right. 
\, \Longrightarrow  \,
\|h_{\eps,y_\eps}(w_\eps)-h_{\eps,y_\eps}(w_\eps-w)-h_y(w)\|=o_\eps(1).
\end{\equ}
To this end, we first claim that
\begin{equation}\label{BL1}
\begin{aligned}
  &\text{$y_\eps\to y$ in $\R^3$ and $u_\eps\weakto u$ in $E$ as $\eps\to0$}\\
  &\hspace{1cm} \Longrightarrow
    \Phi_{\eps,y_\eps}(u_\eps)-\Phi_{\eps,y_\eps}(u_\eps-u)-\Phi_{\eps,y_\eps}(u)=o_\eps(1) \quad
    \text{as } \eps\to0.
\end{aligned}
\end{equation}
\noindent
This can be proved similarly as \eqref{claim1} in Proposition \ref{PS condition}, therefore we omit the details. We only point out here that, for the nonlinear part, it suffices to check
\[
  \int_{\R^3}\big(G^1(\eps x+y_\eps,|u_\eps|)-G^1(\eps x+y_\eps,|u_\eps-u|)-G^1(\eps x+y_\eps,|u|)\big)dx=o_\eps(1)
\]
where $G^1(x,s)=G(x,s)-\frac\ka 3\chi(x) s^3$. Since $G^1$ is subcritical, the proof follows from a standard argument in \cite[Lemma 7.10]{Ding2}.

As a direct consequence of \eqref{BL1}, we soon conclude that
\begin{equation}\label{BL2}
  \text{For any sequence $w_\eps\weakto 0 $ in $E^+$, we have that $h_{\eps,y_\eps}(w_\eps)\weakto0$ in $E^-$.}
\end{equation}
Indeed, notice that $h_{\eps,y_\eps}(w_\eps)$ is bounded (see Theorem~\ref{thm:red-couple}),
we may assume up to a subsequence that
$h_{\eps,y_\eps}(w_\eps)\weakto u_0\in E^-$. Then
$u_\eps\equiv w_\eps+h_{\eps,y_\eps}(w_\eps)\weakto u_0$.
Now, remark that $\Psi_{\eps,y_\eps}\geq0$,
we conclude from \eqref{BL1} that
\[
\aligned
\frac{a-|V|_\infty}{2a}\|u_0\|^2\leq&\, -\Phi_{\eps,y_\eps}(u_0)
=  \Phi_{\eps,y_\eps}( u_\eps-u_0 )
-\Phi_{\eps,y_\eps}( u_\eps )
+o_\eps(1)
\leq o_\eps(1)
\endaligned
\]
as $\eps\to0$. And hence $u_0=0$.

\medskip

Now we are ready to show \eqref{aim}. Let $w_\eps\weakto w$ in $E^+$. We
may assume $h_{\eps,y_\eps}(w_\eps)\weakto v$ in $E^-$. By  \eqref{BL2},
there holds $h_{\eps,y_\eps}(w_\eps-w)\weakto0$. Using \eqref{BL1} and assertion $(1)$ (i.e. the
fact that $h_{\eps,y_\eps}(w)\to h_y(w)$ as $\eps\to0$),
we conclude that
\[
\aligned
\Phi_{\eps,y_\eps}\big(w_\eps+h_{\eps,y_\eps}(w_\eps)\big)&=\Phi_{\eps,y_\eps}(w+v)
+\Phi_{\eps,y_\eps}\big(w_\eps-w+h_{\eps,y_\eps}(w_\eps)-v\big)+o_\eps(1)  \\
&\leq\Phi_{\eps,y_\eps}\big(w+h_{\eps,y_\eps}(w)\big)
+\Phi_{\eps,y_\eps}\big(w_\eps-w+h_{\eps,y_\eps}(w_\eps-w)\big)+o_\eps(1)  \\
&=\Phi_{\eps,y_\eps}\big(w+h_y(w)\big)
+\Phi_{\eps,y_\eps}\big(w_\eps-w+h_{\eps,y_\eps}(w_\eps-w)\big)+o_\eps(1) \\
&=\Phi_{\eps,y_\eps}\big( w_\eps +h_{\eps,y_\eps}(w_\eps-w) +
h_y(w)\big)+o_\eps(1)
\endaligned
\]
as $\eps\to0$. Now use \eqref{identity1}, we can deduce that
\[
\frac{a-|V|_\infty}{2a}\|h_{\eps,y_\eps}(w_\eps)-h_{\eps,y_\eps}(w_\eps-w)-h_y(w)\|^2
\leq  o_\eps(1)
\]
and hence \eqref{aim} is proved.

%


\vspace{2mm}
{\sc Thomas Bartsch\\
 Mathematisches Institut, Universit\"at Giessen\\
 35392, Giessen, Germany}\\
 Thomas.Bartsch@math.uni-giessen.de\\

{\sc Tian Xu\\
 Center for Applied Mathematics, Tianjin University\\
300072, Tianjin, China}\\
 xutian@amss.ac.cn
\end{document}